\documentclass[a4paper,11pt]{article}
\usepackage{bm,mathrsfs,amsmath,amsthm,amssymb,amscd,longtable,array}
\usepackage[dvipdfmx]{graphicx,color}
\usepackage{tikz}
\usepackage[frame,cmtip,curve,arrow,matrix,line,graph]{xy}
%

\newcommand{\nc}{\newcommand}
\nc{\rnc}{\renewcommand}
\nc{\nn}{\nonumber}
\nc{\der}{{\partial}}
\rnc{\Im}{{\rm{Im}\,}}
\rnc{\Re}{{\rm{Re}\,}}
\nc{\db}{\displaybreak[0]\\}
\nc{\bra}{\langle}
\nc{\ket}{\rangle}
\nc{\bs}{\boldsymbol}

\DeclareMathOperator{\sh}{sh}
\DeclareMathOperator{\ch}{ch}

\def\C{\mathbb{C}}
\def\td{\mathop{\mathrm{Td}}\nolimits}
\def\eu{\mathop{\mathrm{Eu}}\nolimits}

\def\euk{\mathop{\mathrm{Eu}}\nolimits_{K}}
\def\eukt{\mathop{\mathrm{Eu}}\nolimits_{K}^{t}}
\def\intk{\int^{[K]}}
\def\mbi#1{\boldsymbol{#1}}
\def\Eu{\mathop{\mathrm{Eu}}\nolimits}

\newcommand{\Dec}{\operatornamewithlimits{Dec}}
\newcommand{\mk}{\mathfrak}
\def\Hom{\mathop{\mathrm{Hom}}\nolimits}
\def\GL{\operatorname{GL}}

\newtheorem{theorem}{Theorem}[section]
\newtheorem{lemma}[theorem]{Lemma}

\newtheorem{proposition}[theorem]{Proposition}
\newtheorem{corollary}[theorem]{Corollary}

\theoremstyle{definition}

\numberwithin{equation}{section}

\numberwithin{equation}{section}

\textwidth=15.5cm
\textheight=22cm
\hoffset -10mm
\topmargin -1.2cm
\hoffset -10mm

\begin{document}%
%
\title{Algebraic formulas and Geometric derivation \\ of Source Identities }

\author{
Kohei Motegi 
\thanks{E-mail: kmoteg0@kaiyodai.ac.jp 
{\it Faculty of Marine Technology, 
Tokyo University of Marine Science and Technology,}
 {\it Etchujima 2-1-6, Koto-Ku, Tokyo, 135-8533, Japan},
}
\
and \
Ryo Ohkawa 
\thanks{Corresponding author E-mail: ohkawa.ryo@gmail.com
{\it Osaka Central Advanced Mathematical Institute, 
Osaka Metropolitan
University, 558-8585, Osaka, Japan.}
}
}

\date{\today}

\maketitle

\begin{abstract}
Source identities are fundamental identities between multivariable special functions.
We give a geometric derivation of rational and trigonometric source identities.
We also give a systematic derivation and extension of
various determinant representations for source functions
which appeared in previous literature as well as introducing the elliptic version of the determinants,
and obtain identities between determinants.
We also show several symmetrization formulas for the rational version.
\end{abstract}

\section{Introduction}
Identities between special functions play
important roles not only in special function theory itself but also various fields in mathematics and mathematical physics
such as geometric representation theory, supersymmetric gauge theory and integrable systems.
Typicial ones are transformation formulas for hypergeometric functions
which appear as wall-crossing phenomena in supersymmetric gauge theory.
For example, Kajihara's transformation formulas \cite{Kajihara} for hypergeometric functions
appeared in the context of supersymmetric gauge theory \cite{GF,HYY},
and a geometric derivation of the rational version was given by one of the authors and Yoshida \cite{OhkawaYoshida}.
The setting of the algebraic variety used for the derivation was the handsaw quiver variety.

The derivation of transformation formulas for hypergeometric functions by Kajihara \cite{Kajihara} and
an elliptic version by Kajihara-Noumi \cite{KajiharaNoumi} (see also Rosengren \cite{Rosengren} for a different derivation)
is based on specializations of more fundamental identities called as source identities.
The basic idea is that taking multiple principal specializations of the original variables
for the source identities lead to transformation formulas for hypergeometric functions.

For a positive integer $n$, we define
$[1,\dots,n]:=\{ j \ | \ j=1,\dots,n \}$
and denote the sum over all subsets of $[1,\dots,n]$ as 
$\displaystyle \sum_{K \subset [1,\dots,n]} $.
For a set $K$ of integers, we denote the number of elements of the set $K$ as $|K|$.
We introduce the $q$-infinite product
$(u;q)_{\infty}:=\prod_{j=0}^\infty (1-u q^j)$
{\color{black} for $|q|<1$ for $u \in \mathbb{C}$.}
We also introduce the odd theta function
\begin{align}
\theta(u;p):=(u;p)_{\infty}(p/u;p)_{\infty}=\prod_{j=0}^\infty (1-u p^j)(1-p^{j+1}/u),
\end{align}
{ \color{black}
for $0<|p|<1$ and $u \in \mathbb{C} \backslash \{ 0 \}$.}

Elliptic version of the source identity is 
\begin{align}
&\sum_{K \subset [1,\dots,n]} (-z)^{|K|} q^{|K|(|K|-1)/2}
\theta \Big(q^{|K|} \Lambda \prod_{i=1}^n u_i/ \prod_{j=1}^n v_j;p    \Big)
\prod_{\substack{i \in K \\ j \not\in K}} \frac{\theta(qv_j/v_i;p)}{\theta(v_j/v_i;p)}
\prod_{\substack{ i \in K  \\ 1 \le k \le n }} \frac{ \theta(u_k/v_i;p)}{\theta(qu_k/v_i;p)} \nn \\
=& \sum_{K \subset [1,\dots,n]} (-z)^{|K|} q^{|K|(|K|-1)/2} 
\theta \Big(q^{|K|} \Lambda \prod_{i=1}^n u_i/ \prod_{j=1}^n v_j;p    \Big)
\prod_{\substack{i \in K \\ j \not\in K}} 
\frac{\theta(q u_i/u_j;p)}{\theta(u_i/u_j;p)}
\prod_{\substack{ i \in K  \\ 1 \le k \le n }} \frac{\theta(u_i/v_k;p)}{\theta(qu_i/v_k;p)},
\label{ellipticKajiharaNoumi}
\end{align}
where
$q,p,\Lambda,z,v_1,\dots,v_n,u_1,\dots,u_n$ are complex parameters
and
{\color{black}
are in the general positions in the sense that the zeros of the factors in the denominator
are avoided. We also assume $0<|p|<1$.}
The elliptic identity \eqref{ellipticKajiharaNoumi} was derived in {\color{black} \cite[Theorem 1.2]{KajiharaNoumi} }
 by applying difference operators
on the Frobenius determinant formula
which is an elliptic version of the Cauchy determinant formula,
and was used to derive transformation formulas for elliptic hypergeometric functions
by multiple principal specializations.

For the trigonometric case, the source identity is
\begin{align}
&\sum_{K \subset [1,\dots,m]} (-z)^{|K|} q^{|K|(|K|-1)/2} \prod_{\substack{i \in K \\ j \not\in K}} \frac{v_i-q v_j}{v_i-v_j}
\prod_{\substack{ i \in K  \\ 1 \le k \le n }} \frac{v_i-u_k}{v_i-qu_k}
\nonumber \\
=&\frac{(z;q)_{\infty}}{(q^{m-n}z;q)_{\infty}} \sum_{K \subset [1,\dots,n]} (-q^{m-n} z)^{|K|} q^{|K|(|K|-1)/2} \prod_{\substack{i \in K \\ j \not\in K}} \frac{q u_i- u_j}{u_i-u_j}
\prod_{\substack{ i \in K  \\ 1 \le k \le m }} \frac{v_k-u_i}{v_k-qu_i},
\label{trigonometricKajihara}
\end{align}
where $q,z,v_1,\dots,v_m,u_1,\dots,u_n$ are complex parameters {\color{black} and in general positions,}
and the integers $m$ and $n$ are not necessarily the same.
{\color{black}
Note the factor $\displaystyle \frac{(z;q)_{\infty}}{(q^{m-n}z;q)_{\infty}} $
in the right hand side is explicitly 
\begin{align}
\frac{(z;q)_{\infty}}{(q^{m-n}z;q)_{\infty}}
=
\begin{cases}
\displaystyle \prod_{j=0}^{m-n-1} (1-q^j z) & n<m  \\
1 & n=m \\
\displaystyle \frac{1}{\prod_{j=1}^{n-m} (1-q^{-j} z)} & n>m
\end{cases}
. \label{ratiocases}
\end{align}
Since the ratio of the $q$-inifnite products are explicitly \eqref{ratiocases} which consists of finite number of factors,
we can assume $q$ in \eqref{trigonometricKajihara} to be $|q| \neq 1$ instead of $|q|<1$.
Using the $q$-Pochhammer symbol ($q$-shifted factorial) $\displaystyle (u;q)_n:=\frac{(u;q)_\infty}{(uq^n;q)_{\infty}}$ for $n$ an integer,
the factor can be written as
$\displaystyle \frac{(z;q)_{\infty}}{(q^{m-n}z;q)_{\infty}}=(z;q)_{m-n}$.
Let us discuss two limits. The first is the rational limit.
When $n<m$,
introducing $c$ and $w$ through $q=e^{2c}$, $z=e^{2w}$
and using \eqref{ratiocases}, we have
$q^{-(m-n)(m-n-1)/4} z^{-(m-n)/2} (z;q)_{m-n}=(-2)^{m-n} \prod_{j=0}^{m-n-1} \sh(w+cj)$,
and the rational limit of $q^{-(m-n)(m-n-1)/4} z^{-(m-n)/2} (z;q)_{m-n}$
obtained by dividing by $\epsilon^{m-n}$, replacing $c$ and $w$ by $\epsilon c$ and $\epsilon w$ and taking the limit $\epsilon \to 0$
is $(-2)^{m-n} \prod_{j=0}^{m-n-1} (w+cj)$.
In the same way,
for $n>m$, one finds that the rational limit of $q^{-(n-m+1)(n-m)/4} z^{(n-m)/2}
(z;q)_{m-n}$ is
$\frac{1}{(-2)^{n-m} \prod_{j=1}^{n-m}  (w-cj)}
$.
Another type of limit is taking $q \to 1$,
which is simply
$(z;q)_{m-n} \to (1-z)^{m-n}$.
Note the factor appearing in the rational version of the source identity presented later
is this type.}

The trigonometric source identity \eqref{trigonometricKajihara} was derived by Kirillov-Noumi \cite{KN} and Mimachi-Noumi \cite{MN},
and was used in \cite{MN} to construct integral representations for  eigenfunctions of Macdonald operators,
and later used by Kajihara \cite{Kajihara}
which is equivalent to \eqref{trigonometricKajihara} under change of variables)
to derive transformation formulas for $q$-hypergeometric functions.
{\color{black}
For example, the identity
\cite[(3.3)]{Kajihara} (see also \cite[(1.14)]{KN}, \cite[(2.3)]{MN})
which is written as
\begin{align}
F(u|z;w)=(u;t)_{r-p} F(ut^{r-p}|w;z),
\end{align}
where
\begin{align}
F(u|z;w)=
\sum_{K \subset [1,\dots,r]} (-u)^{|K|} t^{|K|(|K|-1)/2} \prod_{\substack{i \in K \\ j \not\in K}} \frac{1-t z_i/z_j}{1-z_i/z_j}
\prod_{\substack{ i \in K  \\ 1 \le k \le p }} \frac{1-z_i w_k}{1-tz_i w_k},
\end{align}
and $z=(z_1,\dots,z_r)$, $w=(w_1,\dots,w_p)$
is equivalent to \eqref{trigonometricKajihara}
under the identification
$z \leftrightarrow u$, $q \leftrightarrow t$, $m \leftrightarrow r$, $n \leftrightarrow p$,
$v_i \leftrightarrow z_i^{-1}$, $u_j \leftrightarrow w_j$.

We also remark that from the identity \eqref{trigonometricKajihara} for the case 
$n<m$, we can get the case $n>m$ as follows.
Replacing $v_i$, $u_j$, $z$ by $u_i^{-1}$, $v_j^{-1}$, $q^{n-m}z$ in
\eqref{trigonometricKajihara} for the case $n<m$,  we have
\begin{align}
&\sum_{K \subset [1,\dots,m]} (-q^{n-m} z)^{|K|} q^{|K|(|K|-1)/2} \prod_{\substack{i \in K \\ j \not\in K}} \frac{q u_i- u_j}{u_i-u_j}
\prod_{\substack{ i \in K  \\ 1 \le k \le n }} \frac{v_k-u_i}{v_k-qu_i} \nn \\
=
&\prod_{j=1}^{m-n} (1-q^{-j} z)
\sum_{K \subset [1,\dots,n]} (-z)^{|K|} q^{|K|(|K|-1)/2} \prod_{\substack{i \in K \\ j \not\in K}} \frac{v_i-q v_j}{v_i-v_j}
\prod_{\substack{ i \in K  \\ 1 \le k \le m }} \frac{v_i-u_k}{v_i-qu_k}.
\end{align}
Dividing both hand sides by $\prod_{j=1}^{m-n} (1-q^{-j} z)$ and replacing $m$ and $n$ by $n$ and $m$,
we get the identity \eqref{trigonometricKajihara} for the case $n>m$.
}

Rational version of the source identity is
\begin{align}
&\sum_{K \subset [1,\dots,m]} (-z)^{|K|} \prod_{\substack{i \in K \\ j \not\in K}} \frac{v_i-v_j-c}{v_i-v_j}
\prod_{\substack{ i \in K  \\ 1 \le k \le n }} \frac{v_i-u_k}{v_i-u_k-c} \nonumber \\
=&(1-z)^{m-n} \sum_{K \subset [1,\dots,n]} (-z)^{|K|} \prod_{\substack{i \in K \\ j \not\in K}} \frac{u_i-u_j+c}{u_i-u_j}
\prod_{\substack{ i \in K  \\ 1 \le k \le m }} \frac{u_i-v_k}{u_i-v_k+c},
\label{rationalKajihara}
\end{align}
where $c,z,v_1,\dots,v_m,u_1,\dots,u_n$ are complex parameters {\color{black} and in general positions.
We also assume $z \neq 1$ for $n>m$. The function in the left hand side itself does not need this assumption
for $n>m$.}
Although not written in this form,
one observes that the identity between determinants
in Gorsky-Zabrodin-Zotov {\color{black} \cite[(4.6)]{GZZ}}
is equivalent to this rational version of source identity.
One can see the equivalence by using the expansion of the determinants given in
Belliard-Slavnov-Vallet {\color{black} \cite[(A.12)]{BSV}.}
In \cite{GZZ}, the identity was combined with the Bethe ansatz equations
to derive duality between classical and quantum integrable systems.
The trigonometric version of the identity between determinants is given in {\color{black} \cite[(4.3)]{BLZZ}}
which is equivalent to \eqref{trigonometricKajihara} using the determinant expansion.
 
One of the main purposes of this paper is to give a geometric 
understanding of these identities.
The summations of both sides in the source identities are regarded
as torus equivariant integrals over moduli of the following framed quiver
$Q \colon$
\begin{center}
\includegraphics[scale=1]{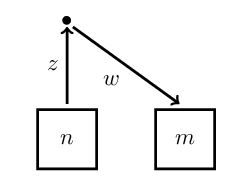}
\end{center}
In \cite{O} and \cite{OS}, we derived wall-crossing formula 
for general setting of framed quiver moduli.
Applying this to the above framed quiver $Q$, we derive the
source identities.

Another purpose of this paper is to
give various determinant forms of the rational functions
appearing in the source identities.
As mentioned above,
rational/trigonometric source identities appear in the works in
integrable systems in disguise, i.e. in the form of identities between determinants.
Besides application to quantum-classical duality \cite{GZZ},
one notes that special cases $z=1$ and/or $m=n$ of the rational functions or determinants 
appear as representations for (partial) domain wall boundary partition functions
and scalar products
of the six-vertex models in the works of
Izergin-Korepin \cite{Izergin,Korepin},
Gromov-Sever-Vieira \cite{GSV},
Kostov \cite{Kostovone,Kostovtwo}, Foda-Wheeler \cite{FodaWheeler}, Belliard-Slavnov \cite{BS} and Minin-Pronko-Tarasov \cite{MPT,PT}.
{\color{black} For example, see \cite[(1.3)]{GSV}, \cite[(3.39)]{Kostovtwo}, \cite[(3.14), (4.13), (4.17)]{FodaWheeler}, \cite[(4.5)]{BS},
\cite[(4.9), (5.9)]{MPT}.}
Recently, $z$ generic and $m \neq n$ case of the rational version of source functions
appear in a more generalized version of domain wall boundary partition functions in
Belliard-Pimenta-Slavnov {\color{black} \cite[(92), (93)]{BPS}.}
We also remark this type of determinant form first appeared in the context of the Bose gas in the work by Gaudin
{\color{black} \cite[Appendix B]{Gaudin}.}
We give a systematic derivation and extension of various forms
of the determinants which appeared in these literature.
The idea is to start from the expressions of source functions as rational functions with products of difference operators applied,
which appeared in Kajihara-Noumi \cite{Kajihara,KajiharaNoumi}.
The systematic derivation can be extended to an elliptic version
by noting that \eqref{ellipticKajiharaNoumi} is originally obtained by
acting products of difference operators on
an elliptic version of Cauchy determinant
in Kajihara-Noumi \cite{KajiharaNoumi}.
As a consequence of this systematic derivation,
we obtain identities between elliptic determinants.

This paper is organized as follows.
We discuss the elliptic, trigonometric and rational
version of source functions and source identities
in section 2, 3 and 4 respectively.
We give a systematic derivation of various determinant forms
of source functions, and also a standard complex analytic proof
of source identities.
For the trigonometric and rational case, we discuss a degeneration from the elliptic and trigonometric version
and also a geometric derivation of the source identities.
Additionaly, we prove a rational version of symmetrization formulas
of Lascoux for the rational case.

We summarize the notations which are frequently used in this paper.
We denote the set of variables $x_1,\dots,x_\ell$ as $\overline{x}$.
We frequently use the sets
$\overline{u}=\{ u_1,\dots,u_n \}$ and $\overline{v}=\{ v_1,\dots,v_m \}$.
We define additive and multiplicative difference operators $T_x$ and $T_{q,x}$
acting on function $f(x)$
as $T_xf(x):=f(x+c)$ and $T_{q,x}f(x):=f(qx)$ where $c$ and $q$ are complex parameters.
The inverse operators 
$T_x^{-1}$ and $T_{q,x}^{-1}$
act on function $f(x)$
as $T_x^{-1} f(x):=f(x-c)$ and $T_{q,x}^{-1}f(x):=f(q^{-1}x)$.

\section{Elliptic version}
In this section, we derive several determinant representations
for elliptic source functions
and also give a complex analytic proof of the elliptic source identity.

\subsection{Determinant representations of elliptic source functions}
We introduce two types of determinant representations
for elliptic source functions.
We give an elliptic version of the determinants introduced in
Minin-Pronko-Tarasov \cite{MPT,PT} and
Belliard-Slavnov \cite{BS}.

Recall the $q$-infinite product and the version of the odd theta function which is used in this paper
\begin{align}
&(u;q)_{\infty}=\prod_{j=0}^\infty (1-u q^j), \\
&\theta(u;p)=(u;p)_{\infty}(p/u;p)_{\infty}=\prod_{j=0}^\infty (1-u p^j)(1-p^{j+1}/u).
\end{align}
Let us
introduce notations for the left and right hand side of \eqref{ellipticKajiharaNoumi}:
\begin{align}
&F^{ell(z)}_n(\overline{u}|\overline{v}) \nn \\
=&\sum_{K \subset [1,\dots,n]} (-z)^{|K|} q^{|K|(|K|-1)/2}
\theta \Big(q^{|K|} \Lambda \prod_{i=1}^n u_i/ \prod_{j=1}^n v_j;p    \Big)
\prod_{\substack{i \in K \\ j \not\in K}} \frac{\theta(qv_j/v_i;p)}{\theta(v_j/v_i;p)}
\prod_{\substack{ i \in K  \\ 1 \le k \le n }} \frac{ \theta(u_k/v_i;p)}{\theta(qu_k/v_i;p)},
\label{ellsourceleft} \\
&G^{ell(z)}_n(\overline{u}|\overline{v}) \nn \\
=& \sum_{K \subset [1,\dots,n]} (-z)^{|K|} q^{|K|(|K|-1)/2} 
\theta \Big(q^{|K|} \Lambda \prod_{i=1}^n u_i/ \prod_{j=1}^n v_j;p    \Big)
\prod_{\substack{i \in K \\ j \not\in K}} 
\frac{\theta(q u_i/u_j;p)}{\theta(u_i/u_j;p)}
\prod_{\substack{ i \in K  \\ 1 \le k \le n }} \frac{\theta(u_i/v_k;p)}{\theta(qu_i/v_k;p)}.
\label{ellsourceright}
\end{align}
We call these functions as elliptic source functions in this paper.

The following elliptic analog of factorization of Vandermonde determinants
is derived in Rosengren-Schlosser {\color{black} \cite[Proposition 6.1]{RS}.}
\begin{align}
\det_{1 \le j,k \le n} \Big( \psi_j^{A_{n-1}}(u_k;p,r) \Big)
=\frac{(p;p)_\infty^n}{(p^n;p^n)_\infty^n}
\theta \Bigg( r \prod_{\ell=1}^n u_\ell;p \Bigg)
\prod_{1 \le i < j \le n} u_j \theta(u_i/u_j;p),
\label{ellipticvandermonde}
\end{align}
where
\begin{align}
\psi_j^{A_{n-1}}(u;p,r):=u^{j-1} \theta (p^{j-1}(-1)^{n-1}r u^n;p^n), \ \ \ j=1,\dots,n.
\end{align}
See \cite{Katori} for applications of the determinants to elliptic stochastic systems.

We also use the Frobenius determinant formula \cite{Frobenius}
which is an elliptic analogue of Cauchy's determinant formula
\begin{align}
\det_{1 \le i,j \le n} \Bigg( 
\frac{\theta(\Lambda u_i /v_j;p)}{\theta(\Lambda;p)\theta(u_i/ v_j;p)}
\Bigg)
=\frac{ \theta \Big( \Lambda \prod_{i=1}^n u_i/ \prod_{j=1}^n v_j;p    \Big) \prod_{1 \le i < j \le n} u_j \theta(u_i/u_j;p) v_j^{-1} \theta(v_j/v_i;p)   }
{\theta(\Lambda;p) \prod_{1 \le i,j \le n} \theta(u_i /v_j;p) }.
\label{Frobenius}
\end{align}

The source functions can be rewritten using multiplicative difference operators as
\begin{align}
&F^{ell(z)}_n(\overline{u}|\overline{v}) \nn \\
=&\frac{ \theta(\Lambda;p) \prod_{i,j=1}^n \theta (u_i/v_j;p)  }{\prod_{1 \le i < j \le n} u_j \theta(u_i/u_j;p) v_j^{-1} \theta(v_j/v_i;p) }
\prod_{j=1}^n (1-z T_{q,v_j}^{-1}) \det_{1 \le i,j \le n} \Bigg( 
\frac{\theta(\Lambda u_i/ v_j;p)}{\theta(\Lambda;p)\theta(u_i/ v_j;p)}
\Bigg), \label{KNF} \\
&G^{ell(z)}_n(\overline{u}|\overline{v}) \nn \\
=&\frac{ \theta(\Lambda;p) \prod_{i,j=1}^n \theta (u_i/v_j;p)  }{\prod_{1 \le i < j \le n} u_j \theta(u_i/u_j;p) v_j^{-1} \theta(v_j/v_i;p) }
\prod_{j=1}^n (1-z T_{q,u_j}) \det_{1 \le i,j \le n} \Bigg( 
\frac{\theta(\Lambda u_i/ v_j;p)}{\theta(\Lambda;p)\theta(u_i/ v_j;p)}
\Bigg), \label{KNleft}
\end{align}
which is essentially the same object used by
Kajihara-Noumi {\color{black} \cite[(1.14)]{KajiharaNoumi}} (see also {\color{black} \cite[(3.1)]{Kajihara} } for the trigonometric version)
to derive transformation formulas for elliptic hypergeometric functions
from the source identity $F^{ell(z)}_n(\overline{u}|\overline{v})=G^{ell(z)}_n(\overline{u}|\overline{v})$.
Starting from these forms, it is easy to see $F^{ell(z)}_n(\overline{u}|\overline{v})=G^{ell(z)}_n(\overline{u}|\overline{v})$
by inserting the difference operators into the determinants.
Using the factorization of the elliptic determinant
\eqref{Frobenius}, the right hand side of \eqref{KNF} and  \eqref{KNleft} 
can be rewritten as
\begin{align}
&F^{ell(z)}_n(\overline{u}|\overline{v}) \nn \\
=&\frac{
\prod_{1 \le i,j \le n} \theta(u_i /v_j;p)
}{\prod_{1 \le i < j \le n} v_j^{-1} \theta(v_j/v_i;p)}
\prod_{j=1}^n (1-z T_{q,v_j}^{-1}) 
\frac{ \theta \Big( \Lambda \prod_{i=1}^n u_i/ \prod_{j=1}^n v_j;p    \Big) \prod_{1 \le i < j \le n} v_j^{-1} \theta(v_j/v_i;p)  }
{ \prod_{1 \le i,j \le n} \theta(u_i / v_j;p) }, \label{ellipticlhsrewrite} \\
&G^{ell(z)}_n(\overline{u}|\overline{v}) \nn \\
=&\frac{
\prod_{1 \le i,j \le n} \theta(u_i /v_j;p)
}{\prod_{1 \le i < j \le n} u_j \theta(u_i/u_j;p)}
\prod_{j=1}^n (1-z T_{q,u_j}) 
\frac{ \theta \Big( \Lambda \prod_{i=1}^n u_i/ \prod_{j=1}^n v_j;p    \Big) \prod_{1 \le i < j \le n} u_j \theta(u_i/u_j;p)  }
{ \prod_{1 \le i,j \le n} \theta(u_i / v_j;p) }, \label{ellipticrhsrewrite}
\end{align}
and further expanding using
\begin{align}
&\Big( \prod_{1 \le i < j \le n} v_j^{-1} \theta(v_j/v_i;p) \Big)^{-1}
\prod_{k \in K} T_{q,v_k}^{-1}  \Big( \prod_{1 \le i < j \le n} v_j^{-1} \theta(v_j/v_i;p) \Big)
=q^{|K|(|K|-1)/2} \prod_{\substack{  i \in K \\ j \not\in K }}  \frac{\theta(qv_j/v_i;p)}{\theta(v_j/v_i;p)}, \\
&\Big( \prod_{1 \le i < j \le n} u_j \theta(u_i/u_j;p) \Big)^{-1}
\prod_{k \in K} T_{q,u_k}  \Big( \prod_{1 \le i < j \le n} u_j \theta(u_i/u_j;p) \Big)
=q^{|K|(|K|-1)/2} \prod_{\substack{  i \in K \\ j \not\in K }}  \frac{\theta(qu_i/u_j;p)}{\theta(u_i/u_j;p)},
\end{align}
gives
\eqref{ellsourceleft} and \eqref{ellsourceright} respectively, and 
this is how the elliptic source identity \eqref{ellipticKajiharaNoumi}
was derived in \cite{KajiharaNoumi}.

We derive an elliptic version of determinant representations
whose trigonometric and rational version 
appeared in Minin-Pronko-Tarasov {\color{black} \cite[(4.9), (5.9)]{MPT}.} 
\begin{proposition} \label{propellipticMPT}
We have the following determinant representations:
\begin{align}
&F^{ell(z)}_n(\overline{u}|\overline{v}) 
=\frac{\theta ( r \prod_{\ell=1}^n v_\ell^{-1};p )}{\det_{1 \le i,j \le n} \Big( \sum_{k=1}^n p_{ik} \psi_k^{A_{n-1}}(v_j^{-1};p,r) \Big)} \nn \\
&\times
\det_{1 \le i,j \le n} \Bigg(
\Bigg( \frac{\theta \Big( \Lambda \prod_{\ell=1}^n u_\ell / \prod_{\ell=1}^n v_\ell;p    \Big)}{\theta ( r \prod_{\ell=1}^n v_\ell^{-1};p )  } 
\Bigg)^{\delta_{i1}}
 \sum_{k=1}^n p_{ik} \psi_k^{A_{n-1}}(v_j^{-1};p,r)
\nn \\
&-z
\Bigg( \frac{\theta \Big(q \Lambda \prod_{\ell=1}^n u_\ell / \prod_{\ell=1}^n v_\ell;p    \Big)}{\theta ( qr \prod_{\ell=1}^n v_\ell^{-1};p )  } 
\Bigg)^{\delta_{i1}}
\sum_{k=1}^n p_{ik} \psi_k^{A_{n-1}}(qv_j^{-1};p,r)
\prod_{\ell=1}^n
\frac{
\theta(u_\ell/v_j;p)
}
{
\theta(qu_\ell /v_j;p) 
} \Bigg),
 \label{ellipticMPTlhs} \\
&G^{ell(z)}_n(\overline{u}|\overline{v}) 
=\frac{\theta ( r \prod_{\ell=1}^n u_\ell;p )}{\det_{1 \le i,j \le n} \Big( \sum_{k=1}^n q_{ik} \psi_k^{A_{n-1}}(u_j;p,r) \Big)} \nn \\
&\times
\det_{1 \le i,j \le n} \Bigg(
\Bigg( \frac{\theta \Big( \Lambda \prod_{\ell=1}^n u_\ell / \prod_{\ell=1}^n v_\ell;p    \Big)}{\theta ( r \prod_{\ell=1}^n u_\ell;p )  } 
\Bigg)^{\delta_{i1}}
 \sum_{k=1}^n q_{ik} \psi_k^{A_{n-1}}(u_j;p,r)
\nn \\
&-z
\Bigg( \frac{\theta \Big(q \Lambda \prod_{\ell=1}^n u_\ell / \prod_{\ell=1}^n v_\ell;p    \Big)}{\theta ( qr \prod_{\ell=1}^n u_\ell;p )  } 
\Bigg)^{\delta_{i1}}
\sum_{k=1}^n q_{ik} \psi_k^{A_{n-1}}(qu_j;p,r)
\prod_{\ell=1}^n
\frac{
\theta(u_j/v_\ell;p)
}
{
\theta(qu_j /v_\ell;p) 
} \Bigg).
 \label{ellipticMPTrhs}
\end{align}
Here, $\delta_{ij}$ is the Kronecker delta,
and $r$, $p_{ij}$ and $q_{ij}$ $(i,j=1,\dots,n)$ are additional parameters
such that $\det_{1 \le i,j \le n}(p_{ij}) \not\equiv 0$, $\det_{1 \le i,j \le n}(q_{ij}) \not\equiv 0$.
\end{proposition}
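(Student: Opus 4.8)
The plan is to prove \eqref{ellipticMPTlhs} and then read off \eqref{ellipticMPTrhs} from the analogous computation with the roles of $\overline u$ and $\overline v$ interchanged and $T_{q,v_j}^{-1}$ replaced by $T_{q,u_j}$, exactly as \eqref{KNF} is related to \eqref{KNleft}. My starting point is the difference-operator form \eqref{ellipticlhsrewrite}, where $F^{ell(z)}_n$ is $\prod_{j=1}^n(1-zT_{q,v_j}^{-1})$ applied to the product $\theta(\Lambda\prod_i u_i/\prod_j v_j;p)\prod_{i<j}v_j^{-1}\theta(v_j/v_i;p)$ divided by $\prod_{i,j}\theta(u_i/v_j;p)$. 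The new ingredient relative to \eqref{KNF} is that, instead of the Frobenius matrix, I realize this numerator as a single $\psi^{A_{n-1}}$-determinant through the elliptic Vandermonde factorization \eqref{ellipticvandermonde}; the auxiliary rows $\sum_k p_{ik}\psi_k^{A_{n-1}}$ and the normalizing denominator $\det_{i,j}(\sum_k p_{ik}\psi_k^{A_{n-1}}(v_j^{-1};p,r))$ in \eqref{ellipticMPTlhs} are then just the freedom of left-multiplying by an invertible matrix $(p_{ik})$ together with the corresponding Vandermonde constant.

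The mechanism that turns the operator into a single determinant is locality: in the $\psi^{A_{n-1}}$-determinant each entry depends on $v_j$ only through its own column, so each $T_{q,v_j}^{-1}$ can be pushed into column $j$, sending $\psi_k^{A_{n-1}}(v_j^{-1};p,r)\mapsto\psi_k^{A_{n-1}}(qv_j^{-1};p,r)$ and converting the factor $1/\prod_\ell\theta(u_\ell/v_j;p)$ inherited from the prefactor into $\prod_\ell\theta(u_\ell/v_j;p)/\theta(qu_\ell/v_j;p)$. This produces precisely the two-term columns of \eqref{ellipticMPTlhs}. Expanding the determinant by multilinearity in the columns yields $\sum_{K\subset[1,\dots,n]}(-z)^{|K|}(\cdots)$, the b-term being taken exactly on the columns $j\in K$; applying \eqref{ellipticvandermonde} to each minor with arguments $v_j^{-1}$ for $j\notin K$ and $qv_j^{-1}$ for $j\in K$, the ratio of its Vandermonde factor $\prod_{i<j}w_j\theta(w_i/w_j;p)$ to the $\prod_{i<j}v_j^{-1}\theta(v_j/v_i;p)$ carried by the prefactor reproduces $q^{|K|(|K|-1)/2}\prod_{i\in K,\,j\notin K}\theta(qv_j/v_i;p)/\theta(v_j/v_i;p)$, by the same shift computation already used between \eqref{ellipticlhsrewrite} and \eqref{ellsourceleft}.

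The hard part will be matching the leading theta of \eqref{ellipticvandermonde} with the genuine source factor $\theta(q^{|K|}\Lambda\prod u/\prod v;p)$ uniformly in $K$. The factorization only delivers $\theta(r\prod_j w_j;p)=\theta(q^{|K|}r\prod_\ell v_\ell^{-1};p)$, in which the reference parameter $r$ still appears; converting it into the source parameter is exactly the job of the distinguished first row, whose factors $(\theta(\Lambda\prod u/\prod v;p)/\theta(r\prod v^{-1};p))^{\delta_{i1}}$ and $(\theta(q\Lambda\prod u/\prod v;p)/\theta(qr\prod v^{-1};p))^{\delta_{i1}}$ are built to carry this replacement together with its $T_{q,v_j}^{-1}$-shift. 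Since these factors live in one row while the leading theta is global, this is a genuinely elliptic bookkeeping step rather than the extraction of a scalar. I would tame it by first specializing $r$ so that $\theta(r\prod v^{-1};p)=\theta(\Lambda\prod u/\prod v;p)$, i.e.\ $r=\Lambda\prod_\ell u_\ell$: there both first-row factors collapse to $1$, every minor in the expansion has leading theta $\theta(q^{|K|}\Lambda\prod u/\prod v;p)$, and the identity with \eqref{ellsourceleft} is immediate. The remaining and most delicate point is to show that the representation is in fact independent of $r$ (and of $(p_{ik})$), so that the stated general-$r$ form follows; I expect to verify this by row and column operations built from the quasi-periodicity of $\theta(\,\cdot\,;p)$ and of $\psi^{A_{n-1}}$, and this independence is where I anticipate the real work to lie.
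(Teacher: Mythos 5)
Up to the final step your plan retraces the paper's own proof: the paper likewise starts from \eqref{ellipticlhsrewrite}--\eqref{ellipticrhsrewrite}, substitutes the elliptic Vandermonde factorization \eqref{ellipticvandermonde}, absorbs the invertible matrix $(p_{ik})$ (resp.\ $(q_{ik})$) via $\det(p_{ik})\det\big(\psi_k^{A_{n-1}}(v_j^{-1};p,r)\big)=\det\big(\sum_k p_{ik}\psi_k^{A_{n-1}}(v_j^{-1};p,r)\big)$, moves the global theta ratio into the first row through the $\delta_{i1}$ exponent, and then pushes $\prod_j(1-zT_{q,v_j}^{-1})$ column by column into the determinant. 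You diverge precisely where you sense danger: the paper performs this insertion without comment, although the first-row factor $\big(\theta(\Lambda\prod_\ell u_\ell/\prod_\ell v_\ell;p)/\theta(r\prod_\ell v_\ell^{-1};p)\big)^{\delta_{i1}}$ depends on \emph{all} the $v_\ell$ and is not column-local. Your replacement — prove the identity at $r=\Lambda\prod_\ell u_\ell$ (resp.\ $r=\Lambda/\prod_\ell v_\ell$ for \eqref{ellipticMPTrhs}), where both $\delta_{i1}$ factors collapse to $1$, every entry is genuinely local in its column variable, the insertion is legitimate, and each mixed Vandermonde minor carries the leading theta $\theta(q^{|K|}\Lambda\prod u/\prod v;p)$ — is correct and complete as far as it goes.

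The deferred step, however, is a genuine gap, and it is not merely hard: independence of $r$ fails. Expanding the claimed determinant multilinearly in columns, every permutation monomial contains the first-row scalar exactly once — the unshifted ratio if column $\sigma(1)$ takes the $z$-free term, the once-shifted ratio $\theta(q\Lambda\cdots;p)/\theta(qr\cdots;p)$ if it takes the $z$-term — regardless of how many other columns are shifted, whereas matching the source function \eqref{ellsourceleft}/\eqref{ellsourceright} requires the globally shifted scalar $\theta(q^{|K|}\Lambda\cdots;p)/\theta(q^{|K|}r\cdots;p)$ against each minor. Already for $n=2$ with $q_{ik}=\delta_{ik}$ the coefficient of $z^2$ in the right-hand side of \eqref{ellipticMPTrhs} is
\begin{equation}
q\,\frac{\theta\big(q\Lambda u_1u_2/v_1v_2;p\big)\,\theta\big(q^2ru_1u_2;p\big)}{\theta\big(qru_1u_2;p\big)}\,R_1R_2,
\qquad R_j=\prod_{\ell=1}^2\frac{\theta(u_j/v_\ell;p)}{\theta(qu_j/v_\ell;p)},
\end{equation}
while \eqref{ellsourceright} has $q\,\theta(q^2\Lambda u_1u_2/v_1v_2;p)\,R_1R_2$; these coincide when $ru_1u_2=\Lambda u_1u_2/v_1v_2$ but not for generic $r$ (a direct numerical check, even at $p=0$, confirms the discrepancy, already at order $z^1$, and the same issue afflicts the generic-$r$ trigonometric analogue in Proposition \ref{proptrigMPT}). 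Since the quasi-periodicity of $\theta(\cdot;p)$ only governs shifts $r\to pr$, not $r\to qr$, no row or column manipulation can establish the independence you hope for. So your argument proves the proposition at the special value of $r$ tied to $\Lambda$, where the determinant structure genuinely closes, but the bridge to arbitrary $r$ — the one step the paper's proof silently elides by treating the $\delta_{i1}$ factor as if it lived in a single column — cannot be built, and your proposal stalls exactly there.
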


\begin{proof}
We show the case for $G^{ell(z)}_n(\overline{u}|\overline{v})$ \eqref{ellipticMPTrhs}
as \eqref{ellipticMPTlhs} can be derived  in the same way.
We start from the expression
\eqref{ellipticrhsrewrite}. Using the elliptic analog of the factorization of Vandermonde determinants
\eqref{ellipticvandermonde}, we can rewrite \eqref{ellipticrhsrewrite} as
\begin{align}
G^{ell(z)}_n(\overline{u}|\overline{v})
=&\frac{\theta ( r \prod_{\ell=1}^n u_\ell;p )
\prod_{1 \le i,j \le n} \theta(u_i /v_j;p)
}{\det_{1 \le j,k \le n} \Big( \psi_k^{A_{n-1}}(u_j;p,r) \Big)} \nn \\
&\times 
\prod_{j=1}^n (1-z T_{q,u_j}) 
\frac{ \theta \Big( \Lambda \prod_{i=1}^n u_i \prod_{j=1}^n v_j;p    \Big) \det_{1 \le j,k \le n} \Big( \psi_k^{A_{n-1}}(u_j;p,r) \Big)  }
{\theta ( r \prod_{\ell=1}^n u_\ell;p )  \prod_{1 \le i,j \le n} \theta(u_i /v_j;p) }.
\label{elliptictochutwo}
\end{align}
Multiplying the denominator and the numerator by some $\det_{1 \le i,k \le n} (q_{ik}) \not\equiv 0$,
where $q_{ik}$, $i,k=1,\dots,n$ are parameters independent of $u_i$, $i=1,\dots,n$,
we can rewrite \eqref{elliptictochutwo} as
\begin{align}
G^{ell(z)}_n(\overline{u}|\overline{v})=
&\frac{\theta ( r \prod_{\ell=1}^n u_\ell;p )  \prod_{1 \le i,j \le n} \theta(u_i /v_j;p) }{ \det_{1 \le i,k \le n} (q_{ik}) \det_{1 \le j,k \le n} \Big( \psi_k^{A_{n-1}}(u_j;p,r) \Big)}
\prod_{j=1}^n (1-z T_{q,u_j}) \nn \\
&\times \frac{ \theta \Big( \Lambda \prod_{i=1}^n u_i / \prod_{j=1}^n v_j;p    \Big) \det_{1 \le i,k \le n}  (q_{ik})  \det_{1 \le j,k \le n} \Big( \psi_k^{A_{n-1}}(u_j;p,r) \Big)  }
{\theta ( r \prod_{\ell=1}^n u_\ell;p ) \prod_{1 \le i,j \le n} \theta(u_i /v_j;p) } \nn \\
=&\frac{\theta ( r \prod_{\ell=1}^n u_\ell;p )  \prod_{1 \le i,j \le n} \theta(u_i /v_j;p)  }{\det_{1 \le i,j \le n} \Big( \sum_{k=1}^n q_{ik} \psi_k^{A_{n-1}}(u_j;p,r) \Big)}
\prod_{j=1}^n (1-z T_{q,u_j}) \nn \\
&\times \frac{ \theta \Big( \Lambda \prod_{i=1}^n u_i / \prod_{j=1}^n v_j;p    \Big) \det_{1 \le i,j \le n} \Big( \sum_{k=1}^n q_{ik} \psi_k^{A_{n-1}}(u_j;p,r) \Big)  }
{\theta ( r \prod_{\ell=1}^n u_\ell;p ) \prod_{1 \le i,j \le n} \theta(u_i /v_j;p) } \nn \\
=&\frac{\theta ( r \prod_{\ell=1}^n u_\ell;p ) \prod_{1 \le i,j \le n} \theta(u_i /v_j;p)}{\det_{1 \le i,j \le n} \Big( \sum_{k=1}^n q_{ik} \psi_k^{A_{n-1}}(u_j;p,r) \Big)}
\prod_{j=1}^n (1-z T_{q,u_j}) \nn \\
&\times
\det_{1 \le i,j \le n} \Bigg(
\Bigg( \frac{\theta \Big( \Lambda \prod_{\ell=1}^n u_\ell / \prod_{\ell=1}^n v_\ell;p    \Big)}{\theta ( r \prod_{\ell=1}^n u_\ell;p )  } 
\Bigg)^{\delta_{i1}}
\frac{
 \sum_{k=1}^n q_{ik} \psi_k^{A_{n-1}}(u_j;p,r)
}
{
\prod_{\ell=1}^n \theta(u_j /v_\ell;p) 
}
\Bigg).
\end{align}
Inserting the difference operators inside the determinant, we have
\begin{align}
&G^{ell(z)}_n(\overline{u}|\overline{v}) 
=
\frac{\theta ( r \prod_{\ell=1}^n u_\ell;p ) \prod_{1 \le i,j \le n} \theta(u_i /v_j;p)}{\det_{1 \le i,j \le n} \Big( \sum_{k=1}^n q_{ik} \psi_k^{A_{n-1}}(u_j;p,r) \Big)} \nn \\
&\times
\det_{1 \le i,j \le n} \Bigg(
\prod_{j=1}^n (1-z T_{q,u_j})
\Bigg(
\Bigg( \frac{\theta \Big( \Lambda \prod_{\ell=1}^n u_\ell / \prod_{\ell=1}^n v_\ell;p    \Big)}{\theta ( r \prod_{\ell=1}^n u_\ell;p )  } 
\Bigg)^{\delta_{i1}}
\frac{
 \sum_{k=1}^n q_{ik} \psi_k^{A_{n-1}}(u_j;p,r)
}
{
\prod_{\ell=1}^n \theta(u_j /v_\ell;p) 
} \Bigg)
\Bigg) \nn \\
=&\frac{\theta ( r \prod_{\ell=1}^n u_\ell;p )}{\det_{1 \le i,j \le n} \Big( \sum_{k=1}^n q_{ik} \psi_k^{A_{n-1}}(u_j;p,r) \Big)} \nn \\
&\times
\det_{1 \le i,j \le n} \Bigg(
\Bigg( \frac{\theta \Big( \Lambda \prod_{\ell=1}^n u_\ell / \prod_{\ell=1}^n v_\ell;p    \Big)}{\theta ( r \prod_{\ell=1}^n u_\ell;p )  } 
\Bigg)^{\delta_{i1}}
 \sum_{k=1}^n q_{ik} \psi_k^{A_{n-1}}(u_j;p,r)
\nn \\
&-z
\Bigg( \frac{\theta \Big(q \Lambda \prod_{\ell=1}^n u_\ell / \prod_{\ell=1}^n v_\ell;p    \Big)}{\theta ( qr \prod_{\ell=1}^n u_\ell;p )  } 
\Bigg)^{\delta_{i1}}
\sum_{k=1}^n q_{ik} \psi_k^{A_{n-1}}(qu_j;p,r)
\prod_{\ell=1}^n
\frac{
\theta(u_j/v_\ell;p)
}
{
\theta(qu_j /v_\ell;p) 
} \Bigg).
\label{elliptictransform}
\end{align}

\end{proof}

This form can be regarded as an elliptic analog of Minin-Pronko-Tarasov \cite{MPT},
which will be explained in later sections.

We apply the same idea due to Kajihara-Noumi
\cite{KajiharaNoumi} now to determinant representations instead of source functions.
From \eqref{ellipticMPTlhs}, \eqref{ellipticMPTrhs}
and $F^{ell(z)}_n(\overline{u}|\overline{v}) 
=G^{ell(z)}_n(\overline{u}|\overline{v})$, we have the following
identity between determinants.

\begin{theorem}
We have the following identity:
\begin{align}
&\frac{\theta ( r \prod_{\ell=1}^n v_\ell^{-1};p )}{\det_{1 \le i,j \le n} \Big( \sum_{k=1}^n p_{ik} \psi_k^{A_{n-1}}(v_j^{-1};p,r) \Big)} \nn \\
&\times
\det_{1 \le i,j \le n} \Bigg(
\Bigg( \frac{\theta \Big( \Lambda \prod_{\ell=1}^n u_\ell / \prod_{\ell=1}^n v_\ell;p    \Big)}{\theta ( r \prod_{\ell=1}^n v_\ell^{-1};p )  } 
\Bigg)^{\delta_{i1}}
 \sum_{k=1}^n p_{ik} \psi_k^{A_{n-1}}(v_j^{-1};p,r)
\nn \\
&-z
\Bigg( \frac{\theta \Big(q \Lambda \prod_{\ell=1}^n u_\ell / \prod_{\ell=1}^n v_\ell;p    \Big)}{\theta ( qr \prod_{\ell=1}^n v_\ell^{-1};p )  } 
\Bigg)^{\delta_{i1}}
\sum_{k=1}^n p_{ik} \psi_k^{A_{n-1}}(qv_j^{-1};p,r)
\prod_{\ell=1}^n
\frac{
\theta(u_\ell/v_j;p)
}
{
\theta(qu_\ell /v_j;p) 
} \Bigg) \nn \\
=&
\frac{\theta ( r \prod_{\ell=1}^n u_\ell;p )}{\det_{1 \le i,j \le n} \Big( \sum_{k=1}^n q_{ik} \psi_k^{A_{n-1}}(u_j;p,r) \Big)} \nn \\
&\times
\det_{1 \le i,j \le n} \Bigg(
\Bigg( \frac{\theta \Big( \Lambda \prod_{\ell=1}^n u_\ell / \prod_{\ell=1}^n v_\ell;p    \Big)}{\theta ( r \prod_{\ell=1}^n u_\ell;p )  } 
\Bigg)^{\delta_{i1}}
 \sum_{k=1}^n q_{ik} \psi_k^{A_{n-1}}(u_j;p,r)
\nn \\
&-z
\Bigg( \frac{\theta \Big(q \Lambda \prod_{\ell=1}^n u_\ell / \prod_{\ell=1}^n v_\ell;p    \Big)}{\theta ( qr \prod_{\ell=1}^n u_\ell;p )  } 
\Bigg)^{\delta_{i1}}
\sum_{k=1}^n q_{ik} \psi_k^{A_{n-1}}(qu_j;p,r)
\prod_{\ell=1}^n
\frac{
\theta(u_j/v_\ell;p)
}
{
\theta(qu_j /v_\ell;p) 
} \Bigg).
\end{align}
\end{theorem}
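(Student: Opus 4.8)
The plan is to read off the two sides of the asserted identity as the determinant representations of the elliptic source functions established just above, and then to close the argument with the elliptic source identity. First I would observe that the left-hand side of the statement is literally the right-hand side of \eqref{ellipticMPTlhs}, and that the right-hand side of the statement is literally the right-hand side of \eqref{ellipticMPTrhs}. Hence, by Proposition \ref{propellipticMPT}, the left-hand side equals $F^{ell(z)}_n(\overline{u}|\overline{v})$ and the right-hand side equals $G^{ell(z)}_n(\overline{u}|\overline{v})$, with the auxiliary parameters $r$, $p_{ij}$, $q_{ij}$ (subject to $\det(p_{ij}) \not\equiv 0$ and $\det(q_{ij}) \not\equiv 0$) carried through unchanged.

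The second and final step is to invoke the elliptic source identity \eqref{ellipticKajiharaNoumi}, which in the notation \eqref{ellsourceleft}--\eqref{ellsourceright} reads $F^{ell(z)}_n(\overline{u}|\overline{v}) = G^{ell(z)}_n(\overline{u}|\overline{v})$. Chaining the three equalities,
\[
\text{(left side)} = F^{ell(z)}_n(\overline{u}|\overline{v}) = G^{ell(z)}_n(\overline{u}|\overline{v}) = \text{(right side)},
\]
delivers the theorem. The general-position hypotheses on $q,p,\Lambda,z,\overline{u},\overline{v}$ ensure that all denominators appearing in the determinants and in their prefactors are nonzero, so no degeneration has to be handled.

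As this shows, the theorem is a purely formal consequence of material already in place, so there is no real obstacle internal to its proof. The genuine work lies upstream and has already been carried out: Proposition \ref{propellipticMPT}, which recasts \eqref{ellipticrhsrewrite} using the elliptic Vandermonde factorization \eqref{ellipticvandermonde} and then inserts the difference operators $\prod_{j=1}^n(1 - z T_{q,u_j})$ and $\prod_{j=1}^n(1 - z T_{q,v_j}^{-1})$ inside the determinants; and the source identity \eqref{ellipticKajiharaNoumi} itself, which follows from acting with the same operators on the Frobenius determinant \eqref{Frobenius} as recalled from Kajihara--Noumi. If a self-contained treatment were wanted, the only nontrivial ingredient to supply would be the equality $F^{ell(z)}_n(\overline{u}|\overline{v}) = G^{ell(z)}_n(\overline{u}|\overline{v})$, obtained by inserting the operators into \eqref{KNF}--\eqref{KNleft}; granting that, the present identity between determinants is immediate.
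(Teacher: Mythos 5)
Your proposal is correct and coincides with the paper's own argument: the theorem is stated there as an immediate consequence of the determinant representations \eqref{ellipticMPTlhs}--\eqref{ellipticMPTrhs} from Proposition \ref{propellipticMPT} together with the elliptic source identity $F^{ell(z)}_n(\overline{u}|\overline{v})=G^{ell(z)}_n(\overline{u}|\overline{v})$ of \eqref{ellipticKajiharaNoumi}. Your chaining of the three equalities, with the auxiliary parameters $r$, $p_{ij}$, $q_{ij}$ carried through under the nondegeneracy assumptions, is exactly the intended proof.
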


We next derive an elliptic analog of determinant representations
which can be regarded as an elliptic lift of the rational version
introduced in Belliard-Slavnov {\color{black} \cite[(4.5)]{BS}.}

\begin{proposition} \label{propellipticBS}
We have the following determinant representations:
\begin{align}
&F_{n}^{ell(z)}(\overline{u}|\overline{v}) \nonumber \\
=&\frac{\theta(\Delta;p)
}{\prod_{1 \le i < j \le n} v_j^{-1} \theta(v_j/v_i;p) \eta_j \theta(\eta_i/\eta_j;p) } 
\nn \\
&\times
\det_{1 \le i,j \le n}
\Bigg(
\Bigg(
\frac{  \theta \Big( \Lambda \prod_{i=1}^n u_i/ \prod_{j=1}^n v_j;p    \Big)   }
{\theta \Big( \Delta \prod_{i=1}^n \eta_i/ \prod_{j=1}^n v_j;p    \Big)   }
\Bigg)^{\delta_{i1}}
\frac{\theta(\Delta \eta_i/v_j;p) \prod_{k=1,k \neq i}^n \theta(\eta_k/v_j;p)  }{\theta(\Delta;p)}
\nn \\
&-z
\Bigg(
\frac{  \theta \Big(q \Lambda  \prod_{i=1}^n u_i/ \prod_{j=1}^n v_j;p    \Big)   }
{\theta \Big(q \Delta  \prod_{i=1}^n \eta_i/ \prod_{j=1}^n v_j;p    \Big)   }
\Bigg)^{\delta_{i1}}
\frac{\theta(q \Delta  \eta_i/v_j;p) \prod_{k=1,k \neq i}^n \theta(q \eta_k/v_j;p)  }{\theta(\Delta;p)}
\prod_{k=1}^n
\frac{ \theta(u_k/v_j;p)}{ \theta(qu_k/v_j;p)}
\Bigg), \label{ellipticBSone} \\
&G_{n}^{ell(z)}(\overline{u}|\overline{v}) \nonumber \\
=&\frac{\theta(\Delta;p)
}{\prod_{1 \le i < j \le n} u_j \theta(u_i/u_j;p) \eta_j^{-1} \theta(\eta_j/\eta_i;p) } 
\nn \\
&\times
\det_{1 \le i,j \le n}
\Bigg(
\Bigg(
\frac{  \theta \Big( \Lambda \prod_{i=1}^n u_i/ \prod_{j=1}^n v_j;p    \Big)   }
{\theta \Big( \Delta \prod_{i=1}^n u_i/ \prod_{j=1}^n \eta_j;p    \Big)   }
\Bigg)^{\delta_{i1}}
\frac{\theta(\Delta u_i/\eta_j;p) \prod_{k=1,k \neq j}^n \theta(u_i/\eta_k;p)  }{\theta(\Delta;p)}
\nn \\
&-z
\Bigg(
\frac{  \theta \Big(q \Lambda  \prod_{i=1}^n u_i/ \prod_{j=1}^n v_j;p    \Big)   }
{\theta \Big(q \Delta  \prod_{i=1}^n u_i/ \prod_{j=1}^n \eta_j;p    \Big)   }
\Bigg)^{\delta_{i1}}
\frac{\theta(q \Delta  u_i/\eta_j;p) \prod_{k=1,k \neq j}^n \theta(q u_i/\eta_k;p)  }{\theta(\Delta;p)}
\prod_{k=1}^n
\frac{ \theta(u_i/v_k;p)}{ \theta(qu_i/v_k;p)}
\Bigg). \label{ellipticBStwo}
\end{align}
Here, $\Delta$ and $\eta_i$ $(i=1,\dots,n)$ are additional parameters.
\end{proposition}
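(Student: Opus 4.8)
The plan is to run, \emph{mutatis mutandis}, the argument already used for Proposition \ref{propellipticMPT}, replacing the elliptic Vandermonde factorization \eqref{ellipticvandermonde} by a factorization of the same Vandermonde-type product extracted from the Frobenius determinant \eqref{Frobenius}. Concretely, I would first record the auxiliary evaluation
$$\det_{1 \le i,j \le n}\!\left( \frac{\theta(\Delta u_i/\eta_j;p)\prod_{k \neq j}\theta(u_i/\eta_k;p)}{\theta(\Delta;p)} \right) = \frac{\theta\!\left(\Delta \prod_{i=1}^n u_i/\prod_{j=1}^n \eta_j;p\right)}{\theta(\Delta;p)}\prod_{1 \le i<j \le n} u_j\theta(u_i/u_j;p)\,\eta_j^{-1}\theta(\eta_j/\eta_i;p),$$
which follows from \eqref{Frobenius} (with $v_j$ replaced by $\eta_j$ and $\Lambda$ by $\Delta$) upon pulling $\prod_{k=1}^n\theta(u_i/\eta_k;p)$ out of the $i$-th row; the product $\prod_{i,k}\theta(u_i/\eta_k;p)$ then cancels the denominator of \eqref{Frobenius}. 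Exchanging the roles of $u_i$ and $v_j$ (entries $\theta(\Delta\eta_i/v_j;p)\prod_{k\neq i}\theta(\eta_k/v_j;p)/\theta(\Delta;p)$) gives the transposed factorization needed for $F$.

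I would treat \eqref{ellipticBStwo} in detail and obtain \eqref{ellipticBSone} symmetrically. Starting from the operator form \eqref{ellipticrhsrewrite}, I substitute the factorization above for $\prod_{i<j}u_j\theta(u_i/u_j;p)$, keeping the $u$-dependent normalization $\theta\!\left(\Delta\prod u_i/\prod\eta_j;p\right)$ together with the determinant inside the range of $\prod_j(1-zT_{q,u_j})$, exactly as $\theta(r\prod u_\ell;p)$ was kept inside in passing from \eqref{ellipticrhsrewrite} to \eqref{elliptictochutwo}. The constant $\eta$-factor $\prod_{i<j}\eta_j^{-1}\theta(\eta_j/\eta_i;p)$ and the constant $\theta(\Delta;p)$ are inert and assemble into the prefactor displayed in \eqref{ellipticBStwo}. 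Using multilinearity I then place the scalar $\theta(\Lambda\prod u/\prod v;p)/\theta(\Delta\prod u/\prod\eta;p)$ into the first row via the exponent $\delta_{i1}$ and distribute $\prod_{i,j}\theta(u_i/v_j;p)^{-1}$ as one factor $\prod_k\theta(u_i/v_k;p)^{-1}$ per row, so that the pre-operator quantity becomes the prefactor times a single $n \times n$ determinant.

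It remains to push the operators inside the determinant. Since every non-scalar factor of the determinant depends on the $u$-variables only through $u_i$ (the row index), the factor $(1-zT_{q,u_i})$ acts on the $i$-th row, and on acting $T_{q,u_i}$ sends $u_i \mapsto qu_i$: it $q$-shifts $\theta(\Delta u_i/\eta_j;p)\prod_{k\neq j}\theta(u_i/\eta_k;p)$ and replaces $\prod_k\theta(u_i/v_k;p)^{-1}$ by $\prod_k\theta(qu_i/v_k;p)^{-1}$; factoring $\prod_k\theta(u_i/v_k;p)^{-1}$ back out of the $i$-th row produces the ratio $\prod_k\theta(u_i/v_k;p)/\theta(qu_i/v_k;p)$ of the second term in \eqref{ellipticBStwo}, while the extracted row factors recombine to $\prod_{i,j}\theta(u_i/v_j;p)^{-1}$ and cancel the $\prod_{i,j}\theta(u_i/v_j;p)$ in the prefactor. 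For \eqref{ellipticBSone} one repeats the computation from \eqref{ellipticlhsrewrite} with the transposed factorization, the operators $T_{q,v_j}^{-1}$ now acting column-wise on $v_j$ so that $\theta(\cdots/\prod v;p)$ acquires the factor $q$.

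The step I expect to require the most care is the last one, namely the interchange of the product of difference operators with the determinant. The only factor coupling all of the $u$-variables is the single theta function carried in the first row, so one must verify that it shifts consistently and that the interchange is genuinely legitimate rather than merely formal; each $T_{q,u_i}$ scales $\prod_\ell u_\ell$ by $q$, turning $\theta(\Lambda\prod u/\prod v;p)$ and $\theta(\Delta\prod u/\prod\eta;p)$ into $\theta(q\Lambda\cdots;p)$ and $\theta(q\Delta\cdots;p)$ as recorded in \eqref{ellipticBStwo}. The cleanest way to make this commutation manifest is to insert the operators one step earlier, directly into the Frobenius determinant \eqref{Frobenius}, where each row involves the single variable $u_i$ and the operators pass through the determinant by pure row-multilinearity, and only then to carry out the Vandermonde-type factorization; this is essentially the order of operations underlying \cite{KajiharaNoumi}.
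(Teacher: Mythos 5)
Your strategy is indeed the paper's own: the same Frobenius-based factorization with parameters $(\Delta,\eta)$, the same multilinearity bookkeeping with the scalar placed in the first row via $\delta_{i1}$, and the same row-wise insertion of $(1-zT_{q,u_i})$. But the step you single out as requiring the most care is not merely delicate --- as you justify it, it is wrong, and it is exactly where the argument breaks (the paper's own proof performs the identical formal interchange without further comment). The interchange $\prod_{j=1}^n(1-zT_{q,u_j})\det(M_{ij})=\det\big((1-zT_{q,u_i})M_{ij}\big)$ is legitimate only when every entry of the $i$-th row depends on the $u$-variables through $u_i$ alone; once the global scalar $\theta(\Lambda\prod_\ell u_\ell/\prod_\ell v_\ell;p)/\theta(\Delta\prod_\ell u_\ell/\prod_\ell\eta_\ell;p)$ has been pushed into the first row, this hypothesis fails. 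Expanding $\prod_{j}(1-zT_{q,u_j})=\sum_{K}(-z)^{|K|}\prod_{k\in K}T_{q,u_k}$, the direct action shifts that scalar $|K|$ times, producing $\theta(q^{|K|}\Lambda\cdots;p)/\theta(q^{|K|}\Delta\cdots;p)$ --- consistent with the factors $\theta(q^{|K|}\Lambda\cdots;p)$ visible in \eqref{ellsourceright} --- whereas after row-wise insertion the scalar in \eqref{ellipticBStwo} is shifted at most once, namely exactly when the first row takes its $-z$ branch. Your sentence that each $T_{q,u_i}$ ``turns $\theta(\Lambda\prod u/\prod v;p)$ into $\theta(q\Lambda\cdots;p)$ as recorded'' is true for a single operator but conflates one shift with $|K|$ shifts.

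The mismatch is concrete. Take $n=2$ and write $U=u_1u_2$, $V=v_1v_2$, $H=\eta_1\eta_2$, $\rho_i=\prod_{k=1}^2\theta(u_i/v_k;p)/\theta(qu_i/v_k;p)$. From \eqref{ellsourceright} the coefficient of $z^2$ in $G_2^{ell(z)}$ is $q\,\theta(q^2\Lambda U/V;p)\,\rho_1\rho_2$, while in the right-hand side of \eqref{ellipticBStwo} only the second branch of every row contributes at order $z^2$, and factorizing that determinant by \eqref{Frobenius} (in the shifted variables $qu_i$) gives
\[
q\,\frac{\theta(q\Lambda U/V;p)\,\theta(q^2\Delta U/H;p)}{\theta(q\Delta U/H;p)}\,\rho_1\rho_2 .
\]
At $p\to 0$ the required equality reduces to $q(q-1)U\big(\Lambda/V-\Delta/H\big)=0$, so for generic $\Delta$ the claimed identity already fails at order $z^2$; it is restored precisely when the first-row ratio is invariant under the shift, i.e.\ $\Delta=\Lambda\prod_\ell\eta_\ell/\prod_\ell v_\ell$ for \eqref{ellipticBStwo} (and $\Delta=\Lambda\prod_\ell u_\ell/\prod_\ell\eta_\ell$ for \eqref{ellipticBSone}), in which case both $\delta_{i1}$-factors are identically $1$, every entry is genuinely row-local, and the insertion is unproblematic. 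Your proposed remedy --- inserting the operators into the Frobenius determinant before factorizing --- does not rescue the generic case: after applying \eqref{Frobenius} with $(\Delta,\eta)$ to \eqref{ellipticrhsrewrite}, the offending $\Lambda$- and $\Delta$-dependence sits in a scalar \emph{outside} the determinant, and $\prod_j(1-zT_{q,u_j})$ does not distribute over a product of a scalar and a determinant. The contrast with \eqref{KNleft} is instructive: there the insertion really is pure row-multilinearity, because all of the $\Lambda$-dependence lives inside entries $\theta(\Lambda u_i/v_j;p)/(\theta(\Lambda;p)\theta(u_i/v_j;p))$ each depending on a single $u_i$, and the higher shifts $\theta(q^{|K|}\Lambda\cdots;p)$ emerge only afterwards, from factorizing the shifted determinants. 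To complete your proof you must therefore either impose the constraint on $\Delta$ above or exhibit the cancellation your interchange silently assumes --- and the $n=2$ computation shows that cancellation does not exist.
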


\begin{proof}
We show the case for
$G_{n}^{ell(z)}(\overline{u}|\overline{v})$ \eqref{ellipticBStwo}.
\eqref{ellipticBSone} can be proved in the same way.
We start from the expression
\eqref{ellipticrhsrewrite} and first rewrite as follows

\begin{align}
&G_{n}^{ell(z)}(\overline{u}|\overline{v}) \nonumber \\
=&\frac{
\prod_{1 \le i,j \le n} \theta(u_i /v_j;p)
}{\prod_{1 \le i < j \le n} u_j \theta(u_i/u_j;p) \eta_j^{-1} \theta(\eta_j/\eta_i;p) } \nn \\
&\times
\prod_{j=1}^n (1-z T_{q,u_j}) 
\frac{ \theta \Big( \Lambda \prod_{i=1}^n u_i/ \prod_{j=1}^n v_j;p    \Big) \prod_{1 \le i < j \le n} u_j \theta(u_i/u_j;p) \eta_j^{-1} \theta(\eta_j/\eta_i;p)  }
{ \prod_{1 \le i,j \le n} \theta(u_i / v_j;p) } \nn \\
=&
\frac{
\prod_{1 \le i,j \le n} \theta(u_i /v_j;p)
}{\prod_{1 \le i < j \le n} u_j \theta(u_i/u_j;p) \eta_j^{-1} \theta(\eta_j/\eta_i;p) } 
\prod_{j=1}^n (1-z T_{q,u_j}) 
\nn \\
&\times
\prod_{1 \le i,j \le n} \frac{ \theta(u_i / \eta_j;p)}{\theta(u_i / v_j;p)}
\frac{ \theta \Big( \Lambda \prod_{i=1}^n u_i/ \prod_{j=1}^n v_j;p    \Big) \prod_{1 \le i < j \le n} u_j \theta(u_i/u_j;p) \eta_j^{-1} \theta(\eta_j/\eta_i;p)  }
{ \prod_{1 \le i,j \le n} \theta(u_i / \eta_j;p) }.
\end{align}
Using the Frobenius determinant formula \eqref{Frobenius}, we get
\begin{align}
&G_{n}^{ell(z)}(\overline{u}|\overline{v}) \nonumber \\
=&
\frac{
\prod_{1 \le i,j \le n} \theta(u_i /v_j;p)
}{\prod_{1 \le i < j \le n} u_j \theta(u_i/u_j;p) \eta_j^{-1} \theta(\eta_j/\eta_i;p) } \nn \\
&\times
\prod_{j=1}^n (1-z T_{q,u_j}) 
\prod_{1 \le i,j \le n} \frac{ \theta(u_i / \eta_j;p)}{\theta(u_i / v_j;p)}
\frac{ \theta(\Delta;p) \theta \Big( \Lambda \prod_{i=1}^n u_i/ \prod_{j=1}^n v_j;p    \Big)   }
{\theta \Big( \Delta \prod_{i=1}^n u_i/ \prod_{j=1}^n \eta_j;p    \Big)   }
\det_{1 \le i,j \le n} \Bigg(
\frac{\theta(\Delta u_i/\eta_j;p)}{\theta(\Delta;p) \theta(u_i/\eta_j;p)}
\Bigg)
 \nn \\
=&\frac{\theta(\Delta;p)
\prod_{1 \le i,j \le n} \theta(u_i /v_j;p)
}{\prod_{1 \le i < j \le n} u_j \theta(u_i/u_j;p) \eta_j^{-1} \theta(\eta_j/\eta_i;p) } 
\nn \\
&\times
\prod_{j=1}^n (1- z  T_{q,u_j}) 
\det_{1 \le i,j \le n} \Bigg(
\Bigg(
\frac{  \theta \Big( \Lambda \prod_{i=1}^n u_i/ \prod_{j=1}^n v_j;p    \Big)   }
{\theta \Big( \Delta \prod_{i=1}^n u_i/ \prod_{j=1}^n \eta_j;p    \Big)   }
\Bigg)^{\delta_{i1}}
\frac{\theta(\Delta u_i/\eta_j;p) \prod_{k=1,k \neq j}^n \theta(u_i/\eta_k;p)  }{\theta(\Delta;p)\prod_{k=1}^n \theta(u_i/v_k;p)}
\Bigg).
\end{align}
Inserting the difference operators into the determinant, we have
\begin{align}
&G_{n}^{ell(z)}(\overline{u}|\overline{v}) \nonumber \\
=&\frac{\theta(\Delta;p)
\prod_{1 \le i,j \le n} \theta(u_i /v_j;p)
}{\prod_{1 \le i < j \le n} u_j \theta(u_i/u_j;p) \eta_j^{-1} \theta(\eta_j/\eta_i;p) } 
\nn \\
&\times
\det_{1 \le i,j \le n}
\Bigg(
(1- z  T_{q,u_i}) 
\Bigg(
\frac{  \theta \Big( \Lambda \prod_{i=1}^n u_i/ \prod_{j=1}^n v_j;p    \Big)   }
{\theta \Big( \Delta \prod_{i=1}^n u_i/ \prod_{j=1}^n \eta_j;p    \Big)   }
\Bigg)^{\delta_{i1}}
\frac{\theta(\Delta u_i/\eta_j;p) \prod_{k=1,k \neq j}^n \theta(u_i/\eta_k;p)  }{\theta(\Delta;p)\prod_{k=1}^n \theta(u_i/v_k;p)}
\Bigg) \nn \\
=&\frac{\theta(\Delta;p)
}{\prod_{1 \le i < j \le n} u_j \theta(u_i/u_j;p) \eta_j^{-1} \theta(\eta_j/\eta_i;p) } 
\nn \\
&\times
\det_{1 \le i,j \le n}
\Bigg(
\Bigg(
\frac{  \theta \Big( \Lambda \prod_{i=1}^n u_i/ \prod_{j=1}^n v_j;p    \Big)   }
{\theta \Big( \Delta \prod_{i=1}^n u_i/ \prod_{j=1}^n \eta_j;p    \Big)   }
\Bigg)^{\delta_{i1}}
\frac{\theta(\Delta u_i/\eta_j;p) \prod_{k=1,k \neq j}^n \theta(u_i/\eta_k;p)  }{\theta(\Delta;p)}
\nn \\
&-z
\Bigg(
\frac{  \theta \Big(q \Lambda  \prod_{i=1}^n u_i/ \prod_{j=1}^n v_j;p    \Big)   }
{\theta \Big(q \Delta  \prod_{i=1}^n u_i/ \prod_{j=1}^n \eta_j;p    \Big)   }
\Bigg)^{\delta_{i1}}
\frac{\theta(q \Delta  u_i/\eta_j;p) \prod_{k=1,k \neq j}^n \theta(q u_i/\eta_k;p)  }{\theta(\Delta;p)}
\prod_{k=1}^n 
\frac{ \theta(u_i/v_k;p)}{ \theta(qu_i/v_k;p)}
\Bigg).
\end{align}

\end{proof}

\subsection{A complex anaytic proof
of the elliptic source identity}

Although it is easy to see the elliptic source identity
follows from the Frobenius determinant formula as first derived in {\color{black} \cite[Theorem 1.2]{KajiharaNoumi}},
we give a direct complex analytic proof.
Some of the properties of elliptic source functions derived in this section are elliptic version
of the ones in \cite{MPT,PT}.

We prove the following version of the source identity
\begin{align}
&\sum_{K \subset [1,\dots,n]} (-z)^{|K|} q^{|K|(|K|-1)/2}
\theta \Big(q^{|K|} \Lambda \prod_{i=1}^n u_i/ \prod_{j=1}^n v_j;p    \Big) \nn \\
&\times \prod_{\substack{i \in K \\ j \not\in K}} \frac{\theta(qv_j/v_i;p)}{\theta(v_j/v_i;p)}
\prod_{\substack{ j \not\in K  \\ 1 \le k \le n }}  \theta(qu_k/v_j;p)
\prod_{\substack{ i \in K  \\ 1 \le k \le n }} \theta(u_k/v_i;p) \nn \\
=& \sum_{K \subset [1,\dots,n]} (-z)^{|K|} q^{|K|(|K|-1)/2} 
\theta \Big(q^{|K|} \Lambda \prod_{i=1}^n u_i/ \prod_{j=1}^n v_j;p    \Big) \nn \\
&\times
\prod_{\substack{i \in K \\ j \not\in K}} 
\frac{\theta(q u_i/u_j;p)}{\theta(u_i/u_j;p)}
\prod_{\substack{ j \not\in K  \\ 1 \le k \le n }}\theta(qu_j/v_k;p)
\prod_{\substack{ i \in K  \\ 1 \le k \le n }} \theta(u_i/v_k;p).
\label{polyellipticsource}
\end{align}
{\color{black}
Note that \eqref{polyellipticsource} is obtained from
\eqref{ellipticKajiharaNoumi} by multiplying both hand sides by
$\prod_{i=1}^n \prod_{k=1}^n \theta(q u_k/v_i;p)$ and the identities are equivalent.
}
Denote the left and right hand side of
\eqref{polyellipticsource} as
$P_{n}^{ell(z)}(\overline{u}|\overline{v})$
and
$Q_{n}^{ell(z)}(\overline{u}|\overline{v})$ respectively.
The relations with $F_{n}^{ell(z)}(\overline{u}|\overline{v})$ and $G_{n}^{ell(z)}(\overline{u}|\overline{v})$
are
\begin{align}
P_{n}^{ell(z)}(\overline{u}|\overline{v})
&=\prod_{i=1}^n \prod_{k=1}^n \theta(q u_k/v_i;p) F_{n}^{ell(z)}(\overline{u}|\overline{v}), \\
Q_{n}^{ell(z)}(\overline{u}|\overline{v})
&=\prod_{i=1}^n \prod_{k=1}^n \theta(q u_k/v_i;p) G_{n}^{ell(z)}(\overline{u}|\overline{v}).
\end{align}

The complex analytic proof is based on the
following fact for elliptic interpolation which can be seen for example in \cite{Rosengrenlecturenote}.
\begin{proposition} \label{ellipticinterpolation} ({\color{black} \cite[Proposition 4.27]{Rosengrenlecturenote}})
Let $t,y_1,\dots,y_n \in \mathbb{C}^*$ be such that
$t \not\in p^{\mathbb{Z}}$, $y_j/y_k \not\in p^{\mathbb{Z}} \ (j \neq k)$.
Let $V$ be the space of functions that are analytic for $x \neq 0$
and satisfy $f(px)=(-1)^n y_1 \cdots y_n t^{-1} x^{-n} f(x)$.
Then, any $f \in V$ is uniquely determined by the values $f(y_1),\dots,f(y_n)$ as
\begin{align}
f(x)=\sum_{j=1}^n f(y_j) \frac{\theta(tx/y_j;p)}{\theta(t;p)}
\prod_{\substack{k=1 \\ k \neq j}}^n \frac{\theta(x/y_k;p)}{\theta(y_j/y_k;p)}.
\end{align}
\end{proposition}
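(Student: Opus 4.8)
The plan is to present $V$ as an $n$-dimensional space of quasi-periodic functions and to exhibit the $n$ summands on the right-hand side as a Lagrange-type basis adapted to the nodes $y_1,\dots,y_n$. The first step is to record two elementary properties of the theta function read off from its product expansion: the quasi-periodicity $\theta(px;p)=-x^{-1}\theta(x;p)$, obtained by shifting the indices in $(px;p)_\infty(p/(px);p)_\infty$, and the vanishing $\theta(1;p)=0$, which comes from the factor $1-p^0=0$ in $(1;p)_\infty$. These are the only analytic inputs needed.

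Next I would show that for each $j$ the function $g_j(x):=\dfrac{\theta(tx/y_j;p)}{\theta(t;p)}\prod_{k\neq j}\dfrac{\theta(x/y_k;p)}{\theta(y_j/y_k;p)}$ belongs to $V$. Up to the nonzero constant $\theta(t;p)^{-1}\prod_{k\neq j}\theta(y_j/y_k;p)^{-1}$, the function $g_j$ is the product of the $n$ theta factors $\theta(tx/y_j;p)$ and $\theta(x/y_k;p)$ $(k\neq j)$; applying the quasi-periodicity to each factor multiplies the product by $(-1)^n x^{-n}\,t^{-1}y_1\cdots y_n$, which is exactly the automorphy factor $(-1)^n y_1\cdots y_n t^{-1}x^{-n}$ defining $V$, and analyticity on $\mathbb{C}^*$ is manifest. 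The interpolation property $g_j(y_i)=\delta_{ij}$ then follows immediately: for $i\neq j$ the $k=i$ factor in the product contributes $\theta(1;p)=0$, while at $x=y_j$ every theta ratio collapses to $1$.

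The crucial step, which makes the interpolation exact rather than merely a family of identities, is the dimension count $\dim V=n$. For this I would expand $f(x)=\sum_{m\in\mathbb{Z}}a_m x^m$ and substitute into $f(px)=c\,x^{-n}f(x)$ with $c=(-1)^n y_1\cdots y_n t^{-1}$, which yields the recursion $a_{m+n}=c^{-1}p^m a_m$. This couples coefficients only within a fixed residue class modulo $n$, so a solution is determined by the $n$ seeds $a_0,\dots,a_{n-1}$. Because $|p|<1$, the ratios $a_{m+n}/a_m=c^{-1}p^m$ force super-exponential decay as $m\to+\infty$, and rewriting the recursion as $a_{m-n}=c\,p^{-(m-n)}a_m$ gives decay as $m\to-\infty$ as well; hence for any seed vector the resulting Laurent series converges on all of $\mathbb{C}^*$, so each seed vector produces an element of $V$ and $\dim V=n$.

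Finally, the relations $g_j(y_i)=\delta_{ij}$ show the $g_j$ are linearly independent, so they form a basis of the $n$-dimensional space $V$; writing an arbitrary $f\in V$ as $f=\sum_j\mu_j g_j$ and evaluating at $y_i$ gives $\mu_i=f(y_i)$, which is precisely the asserted formula. The membership of the $g_j$ in $V$ and the Lagrange property are routine once the quasi-periodicity and $\theta(1;p)=0$ are in hand; the genuine obstacle is the dimension computation, where the two-sided convergence of the coefficient recursion—and thus the standing hypothesis $|p|<1$—is what pins $\dim V$ down to exactly $n$.
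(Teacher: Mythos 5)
Your proof is correct, and it fills a gap the paper deliberately leaves open: the paper does not prove Proposition \ref{ellipticinterpolation} at all, but simply cites \cite[Proposition 4.27]{Rosengrenlecturenote}. Your argument is the standard one found in that reference: the quasi-periodicity $\theta(px;p)=-x^{-1}\theta(x;p)$ and $\theta(1;p)=0$ give membership of the Lagrange-type functions $g_j$ in $V$ together with $g_j(y_i)=\delta_{ij}$, and the Laurent-coefficient recursion $a_{m+n}=c^{-1}p^m a_m$ (with $c=(-1)^n y_1\cdots y_n t^{-1}$), whose two-sided super-exponential decay for $0<|p|<1$ guarantees convergence on $\mathbb{C}^*$, pins down $\dim V=n$; the interpolation formula and its uniqueness then follow by expanding $f$ in the basis $\{g_j\}$. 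All steps check out, including the hypotheses $t\not\in p^{\mathbb{Z}}$ and $y_j/y_k\not\in p^{\mathbb{Z}}$ being exactly what makes $\theta(t;p)$ and $\theta(y_j/y_k;p)$ nonzero, so nothing further is needed.
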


Using  $\theta(px;p)/\theta(x;p)=-1/x$,
we can see the following holds.
\begin{lemma} \label{quasiperiodicities}
We have
\begin{align}
P_{n}^{ell(z)}(\overline{u}|\overline{v})|_{v_k \to p v_k}
&=(-p^{-1})^{n+1} q^n \Lambda v_k^{-n-1} \prod_{i=1}^n u_i^2 \prod_{\substack{j=1 \\ j \neq k}}^n v_j^{-1}
P_{n}^{ell(z)}(\overline{u}|\overline{v}), \\
Q_{n}^{ell(z)}(\overline{u}|\overline{v})|_{v_k \to p v_k}
&=(-p^{-1})^{n+1} q^n \Lambda v_k^{-n-1} \prod_{i=1}^n u_i^2 \prod_{\substack{j=1 \\ j \neq k}}^n v_j^{-1}
Q_{n}^{ell(z)}(\overline{u}|\overline{v}),
\end{align}
for $k=1,\dots,n$.
\end{lemma}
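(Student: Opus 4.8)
The plan is to show that \emph{every} summand indexed by a subset $K \subset [1,\dots,n]$ on both sides of \eqref{polyellipticsource} is multiplied by the \emph{same} factor
\begin{align}
c_k := (-p^{-1})^{n+1} q^n \Lambda v_k^{-n-1} \prod_{i=1}^n u_i^2 \prod_{\substack{j=1 \\ j \neq k}}^n v_j^{-1}
\end{align}
under $v_k \to p v_k$; since $c_k$ does not depend on $K$, the full sums $P_{n}^{ell(z)}(\overline{u}|\overline{v})$ and $Q_{n}^{ell(z)}(\overline{u}|\overline{v})$ then scale by $c_k$, which is exactly the assertion. The only input needed is the quasi-periodicity of the odd theta function in the form $\theta(px;p)=-x^{-1}\theta(x;p)$, equivalently $\theta(p^{-1}x;p)=-p^{-1}x\,\theta(x;p)$, which records how each theta factor changes when its argument is multiplied by $p^{\pm 1}$.

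First I would treat $P_{n}^{ell(z)}$. Relabelling the running product index as $\ell$ so as not to clash with the fixed index $k$, I would go through the four kinds of factor in a fixed summand and record the scalar each contributes. The prefactor $\theta(q^{|K|}\Lambda \prod_i u_i/\prod_j v_j;p)$ always contributes $-p^{-1}q^{|K|}\Lambda \prod_i u_i/\prod_j v_j$, since $\prod_j v_j\to p\prod_j v_j$. The remaining factors must be split into the cases $k\notin K$ and $k\in K$. When $k\notin K$, the only affected factors are the $j=k$ terms of $\prod_{i\in K}\theta(qv_k/v_i;p)/\theta(v_k/v_i;p)$, each contributing $q^{-1}$ (hence $q^{-|K|}$ in total), together with the $j=k$ terms of $\prod_\ell \theta(qu_\ell/v_k;p)$, contributing $\prod_\ell(-p^{-1}qu_\ell/v_k)$; when $k\in K$ the affected factors are instead the $i=k$ terms of the ratio product, each contributing $q$ (hence $q^{\,n-|K|}$), together with the $i=k$ terms of $\prod_\ell \theta(u_\ell/v_k;p)$, contributing $\prod_\ell(-p^{-1}u_\ell/v_k)$. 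Multiplying out, I expect both cases to collapse to exactly $c_k$, the decisive point being that the powers of $q$ — namely $q^{|K|}$ from the prefactor together with $q^{-|K|}$ and $q^{n}$ in the first case, or $q^{\,n-|K|}$ in the second — combine to the $K$-independent $q^n$, after which one uses $\big(\prod_j v_j\big)^{-1}v_k^{-n}=v_k^{-n-1}\prod_{j\neq k}v_j^{-1}$ to reach the displayed form.

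The computation for $Q_{n}^{ell(z)}$ is entirely parallel and in fact a little shorter, because the ratio product $\prod_{i\in K,\,j\notin K}\theta(qu_i/u_j;p)/\theta(u_i/u_j;p)$ contains no $v$'s and is untouched: here the affected factors are the $\ell=k$ terms of $\prod_{j\notin K}\theta(qu_j/v_k;p)$ and of $\prod_{i\in K}\theta(u_i/v_k;p)$, whose product telescopes to $(-p^{-1}v_k^{-1})^{n}q^{\,n-|K|}\prod_\ell u_\ell$, and combined with the prefactor's $-p^{-1}q^{|K|}\Lambda\prod_i u_i/\prod_j v_j$ again yields $c_k$. The main thing to watch is this bookkeeping of the $q$-powers and of the signs and powers of $p$; once the cancellation $q^{|K|}\cdot q^{\,n-|K|}=q^{n}$ (and its $k\notin K$ analogue) is checked, the identity holds term by term and therefore for the full sums.
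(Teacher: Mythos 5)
Your proposal is correct: the term-by-term bookkeeping checks out (in each of the three cases the scalars multiply to $(-p^{-1})^{n+1}q^n\Lambda\,v_k^{-n-1}\prod_i u_i^2\prod_{j\neq k}v_j^{-1}$, with the $q$-powers $q^{|K|}q^{-|K|}q^n$, $q^{|K|}q^{n-|K|}$, and $q^{|K|}q^{n-|K|}$ each collapsing to the $K$-independent $q^n$), and this is exactly the paper's argument, which simply invokes $\theta(px;p)/\theta(x;p)=-1/x$ and leaves the computation implicit. You have merely written out the verification the authors omit, so there is nothing to add.
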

From Proposition
\ref{ellipticinterpolation}
and Lemma \ref{quasiperiodicities},
one notes that it is enough to check
$P_{n}^{ell(z)}(\overline{u}|\overline{v})=Q_{n}^{ell(z)}(\overline{u}|\overline{v})$
for $(n+1)^n$ distinct points in $(v_1,\dots,v_n)$.
We can in fact check the equality for the following $(2n) \times (2n-1) \times \cdots \times (n+1) \geq (n+1)^n$ specializations
$v_1=q^{\epsilon_1} u_{p_1}, \ v_2=q^{\epsilon_2} u_{p_2}, \ \dots, v_n=q^{\epsilon_n} u_{p_n}$,
$p_1,\dots,p_n \in \{1,\dots,n \}$, $\epsilon_1,\dots,\epsilon_n \in \{ 0,1 \}$
and $v_i \neq v_j$ for $i \neq j$.
First we check the following cases which correspond to $p_i=p_j$ for some $i \neq j$.
\begin{lemma} \label{ellipticvanishinglemma}
If $v_i=u_k$, $v_j=qu_k$ for some $i,j,k \ (i \neq j)$,
we have 
\begin{align}
P_{n}^{ell(z)}(\overline{u}|\overline{v})=Q_{n}^{ell(z)}(\overline{u}|\overline{v})=0.
\end{align}
\end{lemma}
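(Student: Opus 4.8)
The plan is to exploit the single vanishing $\theta(1;p)=0$, which holds because $(1;p)_\infty=\prod_{j\ge 0}(1-p^j)$ already contains the factor $1-p^0=0$. Under the specialization $v_i=u_k$, $v_j=qu_k$ (so that $v_i/v_j=q^{-1}$), several of the $\theta$-arguments occurring in the summands of \eqref{polyellipticsource} are forced to equal $1$. I will show that in \emph{every} summand at least one such factor appears, so that each side collapses termwise to $0$. The two sides $P_{n}^{ell(z)}(\overline u|\overline v)$ and $Q_{n}^{ell(z)}(\overline u|\overline v)$ must be treated separately, since their interaction factors live in the $v$-variables and the $u$-variables respectively.

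For $Q_{n}^{ell(z)}(\overline u|\overline v)$ the argument is immediate and does not even touch the interaction factor. Fix an arbitrary $K\subset[1,\dots,n]$ and distinguish whether $k\in K$ or $k\notin K$. If $k\in K$, the product $\prod_{a\in K,\,1\le \ell\le n}\theta(u_a/v_\ell;p)$ contains the factor with $a=k,\ \ell=i$, namely $\theta(u_k/v_i;p)=\theta(1;p)=0$ by $v_i=u_k$. If $k\notin K$, the product $\prod_{b\notin K,\,1\le \ell\le n}\theta(qu_b/v_\ell;p)$ contains the factor with $b=k,\ \ell=j$, namely $\theta(qu_k/v_j;p)=\theta(1;p)=0$ by $v_j=qu_k$. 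Since every $K$ falls into one of these two cases, all summands vanish and $Q_{n}^{ell(z)}(\overline u|\overline v)=0$.

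For $P_{n}^{ell(z)}(\overline u|\overline v)$ the interaction factor depends on the $v$-variables, so I will split into three cases. First, any summand with $i\in K$ vanishes, because $\prod_{a\in K,\,\ell}\theta(u_\ell/v_a;p)$ then contains $\theta(u_k/v_i;p)=\theta(1;p)=0$. Second, any summand with $j\notin K$ vanishes, because $\prod_{b\notin K,\,\ell}\theta(qu_\ell/v_b;p)$ then contains $\theta(qu_k/v_j;p)=\theta(1;p)=0$. The only summands not yet eliminated are those with $i\notin K$ and $j\in K$; for these the interaction product $\prod_{a\in K,\,b\notin K}\theta(qv_b/v_a;p)/\theta(v_b/v_a;p)$ contains the pair $a=j,\ b=i$, whose numerator is $\theta(qv_i/v_j;p)=\theta(q\cdot q^{-1};p)=\theta(1;p)=0$. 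Hence every summand vanishes and $P_{n}^{ell(z)}(\overline u|\overline v)=0$.

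The one point requiring care, and the minor obstacle in the $P$-case, is to confirm that this last vanishing is a genuine finite zero rather than an indeterminate $0/0$: the denominator $\theta(v_i/v_j;p)=\theta(q^{-1};p)$ accompanying the vanishing numerator must be nonzero, which holds in general position since $q\notin p^{\mathbb Z}$, and one should check that no other denominator $\theta(v_b/v_a;p)$ in a surviving summand degenerates. Granting this, each offending factor is an honest zero, no limiting procedure is needed, and the lemma follows.
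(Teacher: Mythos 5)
Your proof is correct, and it takes a genuinely more direct route than the paper's. The paper proves the lemma in two sequential steps: it substitutes only $v_n=qu_n$ first, which collapses each sum to the terms with $n\in K$ and yields the factorized recursions \eqref{subellipticone}--\eqref{subelliptictwo}, expressing $P_n^{ell(z)}$ and $Q_n^{ell(z)}$ at $v_n=qu_n$ as $-z\prod_{j=1}^{n}\theta(u_j/qu_n;p)\prod_{j=1}^{n-1}q\,\theta(u_n/v_j;p)$ times the rank-$(n-1)$ source functions; the vanishing under the further substitution $v_j=u_n$ is then read off from the global prefactor $\prod_{j=1}^{n-1}q\,\theta(u_n/v_j;p)$, and the remaining index configurations are dispatched ``in the same way or by symmetry.'' You instead perform both substitutions simultaneously and run a one-shot termwise case analysis on the subset $K$: two cases for $Q$ (via $\theta(u_k/v_i;p)=\theta(1;p)=0$ when $k\in K$ and $\theta(qu_k/v_j;p)=\theta(1;p)=0$ when $k\notin K$), three cases for $P$, with the residual case $i\notin K$, $j\in K$ killed by the interaction numerator $\theta(qv_i/v_j;p)=\theta(1;p)=0$. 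Both arguments rest on the same single vanishing $\theta(1;p)=0$, but your organization is more elementary and uniform: it treats arbitrary $(i,j,k)$ directly, needs no reduction to rank $n-1$ and no appeal to symmetry, and your closing check that $\theta(q^{-1};p)\neq 0$ and that no other denominator $\theta(v_b/v_a;p)$ degenerates in general position (all such ratios being generic or equal to $q^{\pm1}\notin p^{\mathbb{Z}}$) makes explicit a point the paper leaves implicit. What the paper's two-step organization buys is the stronger intermediate statement — the exact factorized value of the source functions after the single substitution $v_n=qu_n$ — which is more information than the lemma itself requires, whereas your argument proves exactly the stated vanishing with less machinery.
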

\begin{proof}
We check the case $k=n$. From the expressions \eqref{polyellipticsource},
one can check the following relations hold
when substituting $v_n=q u_n$ in $P_{n}^{ell(z)}(\overline{u}|\overline{v})$
and $Q_{n}^{ell(z)}(\overline{u}|\overline{v})$
\begin{align}
&P_{n}^{ell(z)}(\overline{u}|\overline{v})|_{v_n=q u_n} \nn \\
=&-z \prod_{j=1}^{n} \theta(u_j/q u_n;p) \prod_{j=1}^{n-1} q \theta(u_n/v_j;p)
P_{n-1}^{ell(z)}(u_1,\dots,u_{n-1}|v_1,\dots,v_{n-1}|\Lambda), \label{subellipticone} \\
&Q_{n}^{ell(z)}(\overline{u}|\overline{v})|_{v_n=q u_n} \nn \\
=&-z \prod_{j=1}^{n} \theta(u_j/q u_n;p) \prod_{j=1}^{n-1} q \theta(u_n/v_j;p)
Q_{n-1}^{ell(z)}(u_1,\dots,u_{n-1}|v_1,\dots,v_{n-1}|\Lambda). \label{subelliptictwo}
\end{align}
From the factor $\prod_{j=1}^{n-1} q \theta(u_n/v_j)$ in
\eqref{subellipticone} and \eqref{subelliptictwo},
one notes that $P_{n}^{ell(z)}(\overline{u}|\overline{v})|_{v_n=q u_n}$
and $Q_{n}^{ell(z)}(\overline{u}|\overline{v})|_{v_n=q u_n}$ vanish after
further substituting $v_j=u_n$ for some $j \ (1 \le j \le n-1)$.
The other cases can be checked in the same way or follow by symmetry.
\end{proof}

We next assume $p_i \neq p_j$ for $i \neq j$.
For $I=\{ 1 \le i_1 < i_2 < \cdots < i_{|I|} \le n \}$ and $J=\{1 \le j_1 < j_2 < \cdots < j_{|J|} \le n \}$
such that $I \cup J=[ 1,\dots,n ]$,
let us denote by $\overline{v}=\{ \overline{u}_I, q \overline{u}_J \}$ the 
following type of substitution
\begin{align}
v_{q_1}=u_{i_1}, \dots, v_{q_{|I|}}=u_{i_{|I|}}, v_{r_1}=q u_{j_1}, \dots, v_{r_{|J|}}=q u_{j_{|J|}},
\end{align}
for some $\{ q_1, \dots, q_{|I|}, r_1,\dots, r_{|J|} \}=[1,\dots,n ]$.

We have the following explicit evaluations.
\begin{proposition} \label{ellipticPQspecializations}
We have
\begin{align}
&P_{n}^{ell(z)}(\overline{u}|\{ \overline{u}_I, q \overline{u}_J \}) \nn \\
=&(-z)^{|J|} q^{|J|(|J|-1)/2} \theta(\Lambda;p) \prod_{ \substack{ i \in I \\ j \in J  }  } \theta(u_i/u_j;p)
\prod_{ \substack{ i \in I \\ 1 \le j \le n  }  } \theta(q u_j/u_i;p)
 \prod_{ \substack{ i \in J \\ j \in J  }  } \theta(u_i/q u_j;p), \\
&Q_{n}^{ell(z)}(\overline{u}|\{ \overline{u}_I, q \overline{u}_J \}) \nn \\
=&(-z)^{|J|} q^{|J|(|J|-1)/2} \theta(\Lambda;p) \prod_{ \substack{ i \in I \\ j \in J  }  } \theta(u_i/u_j;p)
\prod_{ \substack{ i \in I \\ 1 \le j \le n  }  } \theta(q u_j/u_i;p)
 \prod_{ \substack{ i \in J \\ j \in J  }  } \theta(u_i/q u_j;p).
\end{align}
\end{proposition}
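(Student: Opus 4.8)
The plan is to show that, after the substitution $\overline{v}=\{\overline{u}_I, q\overline{u}_J\}$, exactly one subset $K$ survives in each of the sums defining $P_{n}^{ell(z)}$ and $Q_{n}^{ell(z)}$, and then to evaluate that single term. The mechanism throughout is the elementary vanishing $\theta(1;p)=(1;p)_\infty(p;p)_\infty=0$, combined with the general-position hypothesis on the substituted $v$'s, so that the only denominators appearing (the factors $\theta(v_j/v_i;p)$ and $\theta(u_i/u_j;p)$ inside the ratio factors) remain nonzero. Consequently a term is killed precisely when one of its \emph{numerator} theta factors degenerates to $\theta(1;p)$, and no pole can compensate such a zero.

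First I would treat $P_{n}^{ell(z)}$, where $K$ runs over the $v$-positions. Write $K_0$ for the set of positions carrying a value $qu_j$ with $j\in J$. If $K$ contained a position $i$ with $v_i=u_b$, $b\in I$, then the factor $\prod_{i\in K,\,1\le k\le n}\theta(u_k/v_i;p)$ would contain $\theta(u_b/u_b;p)=\theta(1;p)=0$, so $K$ must avoid every such position; if $K$ omitted a position $j$ with $v_j=qu_a$, $a\in J$, then $\prod_{j\notin K,\,1\le k\le n}\theta(qu_k/v_j;p)$ would contain $\theta(qu_a/qu_a;p)=0$, so $K$ must contain every such position. Since these two families of positions partition $[1,\dots,n]$, the only surviving subset is $K=K_0$, with $|K|=|J|$. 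For this $K$ one has $\prod_j v_j=q^{|J|}\prod_{\ell=1}^n u_\ell$, so $q^{|K|}\Lambda\prod_i u_i/\prod_j v_j=\Lambda$ and the leading theta collapses to $\theta(\Lambda;p)$, while the prefactor becomes $(-z)^{|J|}q^{|J|(|J|-1)/2}$, matching the claimed coefficient.

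The remaining work is bookkeeping: substitute the values $u_b$ $(b\in I)$ and $qu_a$ $(a\in J)$ into the three product factors and split each product according to whether an index lies in $I$ or in $J$. The numerator of the ratio factor yields $\prod_{i\in I,\,j\in J}\theta(u_i/u_j;p)$, and the factor $\prod_{j\notin K,k}\theta(qu_k/v_j;p)$ yields $\prod_{i\in I,\,1\le k\le n}\theta(qu_k/u_i;p)$. The one nontrivial point is that the denominator $\prod_{a\in J,\,b\in I}\theta(u_b/qu_a;p)^{-1}$ of the ratio factor cancels exactly against the $I$-part of $\prod_{i\in K,k}\theta(u_k/v_i;p)=\prod_{a\in J,k}\theta(u_k/qu_a;p)$, leaving the $J$-part $\prod_{i\in J,\,j\in J}\theta(u_i/qu_j;p)$. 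Assembling these three surviving pieces reproduces the asserted formula.

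Finally, $Q_{n}^{ell(z)}$ is handled in parallel, the only difference being that $K$ now indexes the $u$-variables. The same vanishing argument applied to $\prod_{j\notin K,k}\theta(qu_j/v_k;p)$ and $\prod_{i\in K,k}\theta(u_i/v_k;p)$ forces $J\subset K$ and $K\cap I=\emptyset$, hence $K=J$; the leading theta again collapses to $\theta(\Lambda;p)$ and the prefactor to $(-z)^{|J|}q^{|J|(|J|-1)/2}$. Splitting the ratio factor $\prod_{i\in K,\,j\notin K}\theta(qu_i/u_j;p)/\theta(u_i/u_j;p)$ and the two $v$-products over $I$ and $J$, and cancelling the denominator $\prod_{i\in J,\,j\in I}\theta(u_i/u_j;p)^{-1}$ against the $I$-part of $\prod_{i\in K,k}\theta(u_i/v_k;p)$, regroups into the same expression obtained for $P_{n}^{ell(z)}$. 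I expect the only real obstacle to be keeping the index bookkeeping in these cancellations straight; all the conceptual content sits in the vanishing argument isolating the unique surviving subset.
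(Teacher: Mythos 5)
Your proposal is correct and follows essentially the same route as the paper's proof: the theta factors $\prod_{j\notin K,\,k}\theta(qu_k/v_j;p)$ and $\prod_{i\in K,\,k}\theta(u_k/v_i;p)$ (respectively $\prod_{j\notin K,\,k}\theta(qu_j/v_k;p)$ and $\prod_{i\in K,\,k}\theta(u_i/v_k;p)$) isolate the unique surviving subset $K$, after which the single term is evaluated and regrouped by splitting products over $I$ and $J$, with the same cancellation of the ratio-factor denominator against the $I$-part of the remaining product. Your explicit verification that $\prod_j v_j=q^{|J|}\prod_\ell u_\ell$ collapses the leading theta to $\theta(\Lambda;p)$ is a point the paper uses tacitly, but the argument is identical in substance.
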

\begin{proof}
After the substitution
$
v_{q_1}=u_{i_1}, \dots, v_{q_{|I|}}=u_{i_{|I|}}, v_{r_1}=qu_{j_1}, \dots, v_{r_{|J|}}=qu_{j_{|J|}}$,
one notes from the factors
$\displaystyle
\prod_{\substack{ j \not\in K  \\ 1 \le k \le n }}  \theta(qu_k/v_j;p)
$ and
$\displaystyle
\prod_{\substack{ i \in K  \\ 1 \le k \le n }} \theta(u_k/v_i;p)$
that only the summand corresponding to $K=\{ r_1,\dots,r_{|J|} \}$  survives
and we get
\begin{align}
&P_{n}^{ell(z)}(\overline{u}|\{ \overline{u}_I, q\overline{u}_J \}) \nn \\
=&(-z)^{|J|} q^{|J|(|J|-1)/2} \theta(\Lambda;p) \prod_{\substack{i \in J \\ j \in  I}} \frac{\theta(q u_j/q u_i;p)}{\theta(u_j/q u_i;p)}
\prod_{\substack{ j \in I  \\ 1 \le k \le n }} \theta(q u_k/u_j;p)
\prod_{\substack{ j \in J  \\ 1 \le k \le n }} \theta(u_k/q u_j;p) \nn \\
=&(-z)^{|J|} q^{|J|(|J|-1)/2} \theta(\Lambda;p) \prod_{ \substack{ i \in I \\ j \in J  }  } \theta(u_i/u_j;p)
\prod_{ \substack{ i \in I \\ 1 \le j \le n  }  } \theta(q u_j/u_i;p)
 \prod_{ \substack{ i \in J \\ j \in J  }  } \theta(u_i/q u_j;p).
\end{align}

The factorization of
$Q_{n}^{ell(z)}(\overline{u}|\{ \overline{u}_I, q\overline{u}_J \})$ after the substitution
can be checked in the same way.
Substituting
$
v_{q_1}=u_{i_1}, \dots, v_{q_{|I|}}=u_{i_{|I|}}, v_{r_1}=q u_{j_1}, \dots, v_{r_{|J|}}=q u_{j_{|J|}}$,
we note from the factors
$\displaystyle \prod_{\substack{ j \not\in K  \\ 1 \le k \le n }}\theta(qu_j/v_k;p)$ and
$\displaystyle \prod_{\substack{ i \in K  \\ 1 \le k \le n }} \theta(u_i/v_k;p)$
that only the summand corresponding to $K=\{ r_1,\dots,r_{|J|} \}$  survives,
and the factorized expression can be rewritten as
\begin{align}
&Q_{n}^{ell(z)}(\overline{u}|\{ \overline{u}_I, q\overline{u}_J \}) \nn \\
=&(-z)^{|J|} q^{|J|(|J|-1)/2} \theta(\Lambda;p) \prod_{\substack{i \in J \\ j \in  I}} \frac{\theta(q u_i/u_j;p)}{\theta(u_i/u_j;p)}
\nn \\
&\times \prod_{\substack{ j \in J  \\  k \in I }} \theta(u_j/u_k;p) \prod_{\substack{ j \in J  \\  k \in J }} \theta(u_j/qu_k;p)
\prod_{\substack{ i \in I  \\ k \in I }} \theta(q u_i/u_k;p) \prod_{\substack{ i \in I  \\ k \in J }} \theta(u_i/u_k;p) \nn \\
=&(-z)^{|J|} q^{|J|(|J|-1)/2} \theta(\Lambda;p) \prod_{ \substack{ i \in I \\ j \in J  }  } \theta(u_i/u_j;p)
\prod_{ \substack{ i \in I \\ 1 \le j \le n  }  } \theta(q u_j/u_i;p)
 \prod_{ \substack{ i \in J \\ j \in J  }  } \theta(u_i/q u_j;p).
\end{align}

\end{proof}

From Lemma
\ref{ellipticvanishinglemma} and Proposition \ref{ellipticPQspecializations},
we have checked
\eqref{polyellipticsource} holds for enough specializations,
hence we conclude \eqref{polyellipticsource} itself holds.

\section{Trigonometric version}

In this section, we discuss trigonometric source identity as a degeneration from
elliptic source identity.
We also give a geometric derivation and a complex analytic proof
as well as giving a systematic derivation of determinant representations
which can be regarded as a generalization of the ones which appeared in previous literature.

\subsection{From elliptic source identity to trigonometric source identity}
We discuss the degeneration from elliptic source identity \eqref{ellipticKajiharaNoumi}
to trigonometric source identity \eqref{trigonometricKajihara}.

We introduce $q$-integers, $q$-factorials and $q$-binomials $\displaystyle [n]_q:=1+q+q^2+\cdots+q^{n-1}=\frac{1-q^n}{1-q}$, $[n]_q!:=\prod_{j=1}^n [j]_q$,
$\displaystyle \begin{bmatrix}
   n  \\
   \ell
\end{bmatrix}_{q}:=\frac{[n]_q!}{[\ell]_q! [n-\ell]_q !}$ for $n,\ell$ nonnegative integers.
To discuss degeneration, we also
introduce notation for the following rational function
\begin{align}
&F_{n,m}^{trig(z)}(\overline{u}|\overline{v}|\Lambda) \nn \\
:=&
\sum_{K \subset [1,\dots,m]} (-z)^{|K|} 
q^{|K|(|K|-1)/2}
(1-q^{|K|} \Lambda )
\prod_{\substack{i \in K \\ j \not\in K}}
\frac{v_i-qv_j}{v_i-v_j}
\prod_{\substack{ i \in K  \\ 1 \le k \le n }} \frac{v_i-u_k}{v_i-qu_k}.
\end{align}
Also note $\theta(z;0)=1-z$.
By taking the trigonometric limit $p \to 0$
of the elliptic source identity \eqref{ellipticKajiharaNoumi},
we get the following identity.
\begin{theorem} \label{limitofellipticsource}
For $n \geq m$,
we have
\begin{align}
&\sum_{\ell=0}^{n-m} (-z)^\ell q^{\ell(\ell-1)/2}
\displaystyle \begin{bmatrix}
   n-m  \\
   \ell
\end{bmatrix}_{q}
F_{n,m}^{trig(q^{n-m} z)}(\overline{u}|\overline{v}|q^\ell \Lambda) \nn \\
=&\sum_{K \subset [1,\dots,n]} (-z)^{|K|} q^{|K|(|K|-1)/2}
(1-q^{|K|} \Lambda )
\prod_{\substack{i \in K \\ j \not\in K}} \frac{q u_i- u_j}{u_i-u_j}
\prod_{\substack{ i \in K  \\ 1 \le k \le m }} \frac{v_k-u_i}{v_k-qu_i}.
\label{extendedtrigonometricsource}
\end{align}
\end{theorem}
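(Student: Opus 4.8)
The plan is to derive \eqref{extendedtrigonometricsource} as a two-step degeneration of the elliptic source identity \eqref{ellipticKajiharaNoumi}: first the trigonometric limit $p\to 0$, which produces the balanced ($n=m$) version of the extended identity, and then a coordinated limit sending $n-m$ of the $v$-variables to infinity. For the first step I would simply take $p\to 0$ in \eqref{ellipticKajiharaNoumi}, using $\theta(x;0)=1-x$. Every theta ratio degenerates, e.g. $\theta(qv_j/v_i;p)/\theta(v_j/v_i;p)\to (v_i-qv_j)/(v_i-v_j)$ and $\theta(u_k/v_i;p)/\theta(qu_k/v_i;p)\to (v_i-u_k)/(v_i-qu_k)$, while the $|K|$-dependent prefactor becomes $1-q^{|K|}\Lambda_0\prod_i u_i/\prod_j v_j$, where I write $\Lambda_0$ for the elliptic parameter to reserve $\Lambda$ for the target. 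This yields an extended trigonometric identity with $n$ variables on each side, in which the combination $\Lambda_0\prod_i u_i/\prod_j v_j$ plays the role of the free deformation parameter.

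Next, for $n>m$ I would send $v_{m+1},\dots,v_n\to\infty$ along a geometric progression $v_{m+k}=s\,q^{k-1}$ with $s\to\infty$, while reparametrizing $\Lambda_0:=\Lambda\prod_{j=1}^n v_j/\prod_{i=1}^n u_i$ so that $\Lambda_0\prod_i u_i/\prod_j v_j=\Lambda$ holds identically in $s$. On the $u$-side (the right-hand side) the limit is immediate: the factors $\prod_{k=m+1}^n (v_k-u_i)/(v_k-qu_i)\to 1$ and the prefactor is exactly $1-q^{|K|}\Lambda$, so that side converges to the right-hand side of \eqref{extendedtrigonometricsource}. On the $v$-side (the left-hand side) I would split each subset as $K=K'\sqcup K''$ with $K'=K\cap[1,\dots,m]$ and $K''=K\cap[m+1,\dots,n]$, and set $\ell=|K''|$.

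The limiting behaviour of the $v$-side factors then organizes as follows: every factor carrying an index $i\in K''$ paired with a finite index, together with the $u$-product $\prod_{k=1}^n(v_i-u_k)/(v_i-qu_k)$ for such $i$, tends to $1$; the mixed factors $(v_i-qv_j)/(v_i-v_j)$ with $i\in K'$ and $j\in[m+1,\dots,n]\setminus K''$ each tend to $q$, contributing $q^{|K'|(n-m-\ell)}$; the factors with all indices in $[1,\dots,m]$ reassemble into $F_{n,m}^{trig}$; and the factors with both indices in $[m+1,\dots,n]$ form a block depending only on $K''$. The $q$-power bookkeeping is routine but delicate: writing $q^{|K|(|K|-1)/2}=q^{|K'|(|K'|-1)/2}\,q^{|K'|\ell}\,q^{\ell(\ell-1)/2}$ and combining with the factor $q^{|K'|(n-m-\ell)}$ above, the total exponent linear in $|K'|$ is $|K'|(n-m)$, which converts $(-z)^{|K'|}$ into $(-q^{n-m}z)^{|K'|}$; and $1-q^{|K|}\Lambda=1-q^{|K'|}(q^\ell\Lambda)$, so the inner sum over $K'\subset[1,\dots,m]$ is precisely $F_{n,m}^{trig(q^{n-m}z)}(\overline u|\overline v|q^\ell\Lambda)$.

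The one genuinely nontrivial ingredient, which I expect to be the main obstacle, is the residual block sum $\sum_{|K''|=\ell}\prod_{i\in K'',\,j\in[m+1,\dots,n]\setminus K''}(v_i-qv_j)/(v_i-v_j)$, which must be shown to equal $\begin{bmatrix} n-m\\ \ell\end{bmatrix}_q$. The cleanest route is to observe that, as a rational function of $v_{m+1},\dots,v_n$, this sum is symmetric, has no poles (the apparent poles at coinciding variables cancel in pairs), and is homogeneous of total degree zero and bounded in each variable, hence is a constant; one then evaluates at the geometric point $v_{m+k}=q^{k-1}$ to identify the constant as the $q$-binomial coefficient. Collecting the outer coefficient $(-z)^\ell q^{\ell(\ell-1)/2}$ together with this $q$-binomial and summing over $\ell=0,\dots,n-m$ reproduces exactly the left-hand side of \eqref{extendedtrigonometricsource}. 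Since both sides coincide before the degeneration by the elliptic source identity \eqref{ellipticKajiharaNoumi}, equality is preserved in the limit, and the theorem follows.
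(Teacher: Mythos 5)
Your proposal is correct, and its first step is exactly the paper's: the $m=n$ case of \eqref{extendedtrigonometricsource} is obtained by letting $p\to 0$ in \eqref{ellipticKajiharaNoumi} and absorbing $\prod_i u_i/\prod_j v_j$ into $\Lambda$. Where you genuinely depart from the paper is the reduction from $n$ to $m$ variables $v$. The paper proceeds by descending induction on $m$, sending a single $v_m\to\infty$ at each step; splitting each subset according to whether $m\in K$ yields $\lim_{v_m\to\infty}F_{n,m}^{trig(q^{n-m}z)}(\overline{u}|\overline{v}|q^{\ell}\Lambda)=F_{n,m-1}^{trig(q^{n-m+1}z)}(\overline{u}|\overline{v}|q^{\ell}\Lambda)-q^{n-m}z\,F_{n,m-1}^{trig(q^{n-m+1}z)}(\overline{u}|\overline{v}|q^{\ell+1}\Lambda)$, and the two resulting sums are recombined through the $q$-Pascal rule $q^{n-m-\ell+1}\begin{bmatrix} n-m\\ \ell-1\end{bmatrix}_{q}+\begin{bmatrix} n-m\\ \ell\end{bmatrix}_{q}=\begin{bmatrix} n-m+1\\ \ell\end{bmatrix}_{q}$, so the $q$-binomial coefficients are built up recursively. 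You instead send all $n-m$ surplus variables to infinity at once along $v_{m+k}=sq^{k-1}$, decompose $K=K'\sqcup K''$, and produce the $q$-binomial in one stroke from the symmetrization identity $\sum_{|K''|=\ell}\prod_{i\in K'',\,j\notin K''}(v_i-qv_j)/(v_i-v_j)=\begin{bmatrix} n-m\\ \ell\end{bmatrix}_{q}$, proved by your Liouville-type argument and evaluation at the geometric point (where indeed only the initial-segment subset survives and the product telescopes to the $q$-binomial); note this lemma is just the $q\mapsto q^{-1}$ mirror of the paper's own identity \eqref{oneqidentity}, which it proves by the same Liouville strategy in its complex-analytic proof, so your route uses a tool the paper develops anyway, only in a different place. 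I verified your bookkeeping: $q^{|K|(|K|-1)/2}=q^{|K'|(|K'|-1)/2}q^{|K'|\ell}q^{\ell(\ell-1)/2}$ combined with $q^{|K'|(n-m-\ell)}$ from the mixed factors gives $(-q^{n-m}z)^{|K'|}$, the prefactor is $1-q^{|K'|}(q^{\ell}\Lambda)$, and the reparametrization $\Lambda_0=\Lambda\prod_j v_j/\prod_i u_i$ keeps the prefactor constant along the scaling, so the term-by-term limit of the finite sum is legitimate. The trade-off: the paper's induction keeps every limit one-dimensional and needs nothing beyond the Pascal recursion, while your single-shot degeneration is more direct and makes the combinatorial origin of the $q$-binomial transparent, at the cost of the coordinated-limit setup and the auxiliary constant-sum lemma.
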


Note that when $\Lambda=0$, using the $q$-binomial theorem
\begin{align}
&\sum_{\ell=0}^{n}
z^{\ell} q^{\ell(\ell+1)/2}
\begin{bmatrix}
   n  \\
   \ell
\end{bmatrix}_{q}
=\prod_{j=1}^{n} (1+q^j z), \label{qbinomial}
\end{align}
\eqref{extendedtrigonometricsource} reduces to
\begin{align}
\prod_{j=1}^{n-m} (1-q^{j-1} z)
F_{n,m}^{trig(q^{n-m} z)}(\overline{u}|\overline{v}|0)
=
\sum_{K \subset [1,\dots,n]} (-z)^{|K|} q^{|K|(|K|-1)/2}
\prod_{\substack{i \in K \\ j \not\in K}} \frac{q u_i- u_j}{u_i-u_j}
\prod_{\substack{ i \in K  \\ 1 \le k \le m }} \frac{v_k-u_i}{v_k-qu_i}
\label{lambdazero}
\end{align}
which becomes the trigonometric source identity 
\eqref{trigonometricKajihara}
after redefining $q^{n-m} z$ as $z$.
Also note that equating both hand sides of the coefficients of
$\Lambda$ in \eqref{extendedtrigonometricsource},
we get \eqref{lambdazero} with $z$ replaced by $qz$.
This means \eqref{extendedtrigonometricsource} is actually a combination of
the trigonometric source identity \eqref{trigonometricKajihara}.

\begin{proof}
We prove by descending induction on $m$, starting from $m=n$.
By taking the limit $p \to 0$ of elliptic source identity \eqref{ellipticKajiharaNoumi}
and redefining $\Lambda \prod_{i=1}^n u_i / \prod_{j=1}^n v_j$ as $\Lambda$,
we get the case $m=n$ of \eqref{extendedtrigonometricsource},
which corresponds to the initial case.

Suppose \eqref{extendedtrigonometricsource} holds,
and derive \eqref{extendedtrigonometricsource}
with $m$ replaced by $m-1$.
We take the limit $v_m \rightarrow \infty$ of
both hand sides of \eqref{extendedtrigonometricsource}.
Dividing the sums in $F_{n,m}^{trig(q^{n-m} z)}(\overline{u}|\overline{v}|q^\ell \Lambda)$
into two parts according to whether $m \in K$ or $m \not\in K$,
one can show
\begin{align}
\lim_{v_m \rightarrow \infty}
F_{n,m}^{trig(q^{n-m} z)}(\overline{u}|\overline{v}|q^\ell \Lambda)
=-q^{n-m} z F_{n,m-1}^{trig(q^{n-m+1} z)}(\overline{u}|\overline{v}|q^{\ell+1} \Lambda)
+F_{n,m-1}^{trig(q^{n-m+1} z)}(\overline{u}|\overline{v}|q^\ell \Lambda).
\end{align}
Then we find the limit $v_m \rightarrow \infty$ of the left hand side of \eqref{extendedtrigonometricsource} can be rewritten 
in the following way
\begin{align}
&\sum_{\ell=0}^{n-m} (-z)^\ell q^{\ell(\ell-1)/2}
\displaystyle \begin{bmatrix}
   n-m  \\
   \ell
\end{bmatrix}_{q}
\Bigg(
-q^{n-m} z F_{n,m-1}^{trig(q^{n-m+1} z)}(\overline{u}|\overline{v}|q^{\ell+1} \Lambda)
+F_{n,m-1}^{trig(q^{n-m+1} z)}(\overline{u}|\overline{v}|q^\ell \Lambda)
\Bigg) \nn \\
=&\sum_{\ell=0}^{n-m+1} (-z)^\ell q^{\ell(\ell-1)/2}
\Bigg(
q^{n-m-\ell+1}
\displaystyle \begin{bmatrix}
   n-m  \\
   \ell-1
\end{bmatrix}_{q}+
\displaystyle \begin{bmatrix}
   n-m  \\
   \ell
\end{bmatrix}_{q}
\Bigg)
F_{n,m-1}^{trig(q^{n-m+1} z)}(\overline{u}|\overline{v}|q^\ell \Lambda)
\nn \\
=&\sum_{\ell=0}^{n-m+1} (-z)^\ell q^{\ell(\ell-1)/2}
\displaystyle \begin{bmatrix}
   n-m+1  \\
   \ell
\end{bmatrix}_{q}
F_{n,m-1}^{trig(q^{n-m+1} z)}(\overline{u}|\overline{v}|q^\ell \Lambda).
\end{align}
It is also easy to see the the limit $v_m \rightarrow \infty$ of the right hand side of \eqref{extendedtrigonometricsource}
is
\begin{align}
\sum_{K \subset [1,\dots,n]} (-z)^{|K|} q^{|K|(|K|-1)/2}
(1-q^{|K|} \Lambda )
\prod_{\substack{i \in K \\ j \not\in K}} \frac{q u_i- u_j}{u_i-u_j}
\prod_{\substack{ i \in K  \\ 1 \le k \le m-1 }} \frac{v_k-u_i}{v_k-qu_i},
\end{align}
hence we get
\begin{align}
&\sum_{\ell=0}^{n-m+1} (-z)^\ell q^{\ell(\ell-1)/2}
\displaystyle \begin{bmatrix}
   n-m+1  \\
   \ell
\end{bmatrix}_{q}
F_{n,m-1}^{trig(q^{n-m+1} z)}(\overline{u}|\overline{v}|q^\ell \Lambda) \nn \\
=&\sum_{K \subset [1,\dots,n]} (-z)^{|K|} q^{|K|(|K|-1)/2}
(1-q^{|K|} \Lambda )
\prod_{\substack{i \in K \\ j \not\in K}} \frac{q u_i- u_j}{u_i-u_j}
\prod_{\substack{ i \in K  \\ 1 \le k \le m-1 }} \frac{v_k-u_i}{v_k-qu_i},
\end{align}
which is nothing but \eqref{extendedtrigonometricsource}
with $m$ replaced by $m-1$.

\end{proof}

\subsection{Determinant representations of trigonometric source functions}

We present determinant representations for trigonometric source functions.
The procedure of deriving determinant representations go pararell with the
elliptic case in most cases.

We introduce notations for the left and right hand side of
\eqref{trigonometricKajihara}
\begin{align}
F_{n,m}^{trig(z)}(\overline{u}|\overline{v})&:=
\sum_{K \subset [1,\dots,m]} (-z)^{|K|} 
q^{|K|(|K|-1)/2}
\prod_{\substack{i \in K \\ j \not\in K}}
\frac{v_i-qv_j}{v_i-v_j}
\prod_{\substack{ i \in K  \\ 1 \le k \le n }} \frac{v_i-u_k}{v_i-qu_k},
\label{trigonometricsourcelhs} \\
G_{n,m}^{trig(z)}(\overline{u}|\overline{v})&:=
{\color{black}
\frac{(z;q)_{\infty}}{(q^{m-n}z;q)_{\infty}}
} \sum_{K \subset [1,\dots,n]}
 (-q^{m-n} z)^{|K|} q^{|K|(|K|-1)/2} \prod_{\substack{i \in K \\ j \not\in K}} \frac{q u_i- u_j}{u_i-u_j}
\prod_{\substack{ i \in K  \\ 1 \le k \le m }} \frac{v_k-u_i}{v_k-qu_i}.
\label{trigonometricsourcerhs}
\end{align}
We call these rational functions as trigonometric source functions.
First we rewrite
\eqref{trigonometricsourcelhs} and \eqref{trigonometricsourcerhs} in the following forms
using multiplicative difference operators
\begin{align}
F_{n,m}^{trig(z)}(\overline{u}|\overline{v})=
&\sum_{K \subset [1,\dots,m]} (-zq^{m-n-1})^{|K|} 
q^{-|K|(|K|-1)/2}
\prod_{\substack{i \in K \\ j \not\in K}}
\frac{q^{-1} v_i-v_j}{v_i-v_j}
\prod_{\substack{ i \in K  \\ 1 \le k \le n }} \frac{v_i-u_k}{q^{-1} v_i-u_k}
\nn \\
=&\frac{\prod_{i=1}^m \prod_{k=1}^n (v_i-u_k)}{\prod_{1 \le i < j \le m} (v_j-v_i)}
\prod_{j=1}^m (1-z q^{m-n-1} T_{q,v_j}^{-1}) \frac{\prod_{1 \le i < j \le m} (v_j-v_i)}{\prod_{i=1}^m \prod_{k=1}^n (v_i-u_k)}.
\label{trigdiffone}  \\
G_{n,m}^{trig(z)}(\overline{u}|\overline{v})
=&
{\color{black} \frac{(z;q)_{\infty}}{(q^{m-n}z;q)_{\infty}}
}
\frac{\prod_{i=1}^m \prod_{k=1}^n (v_i-u_k)}{\prod_{1 \le i < j \le n} (u_j-u_i)}
\prod_{j=1}^n (1- z q^{m-n}  T_{q,u_j}) \frac{\prod_{1 \le i < j \le n} (u_j-u_i)}{\prod_{i=1}^m \prod_{k=1}^n (v_i-u_k)}.
\label{trigdifftwo}
\end{align}
Source functions in these forms essentially appear in {\color{black} \cite[(3.1)]{Kajihara} and \cite[(2.4)]{MN} } for example.

We use the $p=0$ degeneration
of the factorization of the elliptic Vandermonde determinants \eqref{ellipticvandermonde}
\begin{align}
\det_{1 \le j,k \le n} \Big( \psi_j^{A_{n-1}}(u_k;0,r) \Big)
=
\Bigg(1-r \prod_{\ell=1}^n u_\ell \Bigg)
\prod_{1 \le i < j \le n} (u_j-u_i),
\label{trigvandermonde}
\end{align}
where
\begin{align}
  \psi_j^{A_{n-1}}(u;0;r)=
  \begin{cases}
    1-(-1)^{n-1}ru^n & \text{if $j=1$,} \\
    u^{j-1}                 & \text{if $j=2,\dots,n$.} \\
  \end{cases}
\end{align}

The $p=0$ case of the Frobenius determinant formula \eqref{Frobenius} is
\begin{align}
\det_{1 \le i,j \le n} \Bigg( 
\frac{v_j-\Lambda u_i}{(1-\Lambda)(v_j-u_i)}
\Bigg)
=\frac{ \Big( \prod_{j=1}^n v_j- \Lambda \prod_{i=1}^n u_i  \Big) \prod_{1 \le i < j \le n} (u_j -u_i) (v_i-v_j)   }
{(1-\Lambda) \prod_{1 \le i,j \le n} (v_j-u_i) }.
\label{trigFrob}
\end{align}

We can extend the determinant representations
introduced by Minin-Pronko-Tarasov \cite{MPT,PT}
to the case when $z$ is generic and $m$ and $n$ are not necessarily the same.

\begin{proposition} \label{proptrigMPT}
We have the following determinant representations:
\begin{align}
F_{n,m}^{trig(z)}(\overline{u}|\overline{v})=&
\frac{ 1-r \prod_{\ell=1}^m v_\ell }{\det_{1 \le i,j \le m} \Big( \sum_{k=1}^m p_{ik} \psi_k^{A_{m-1}}(v_j;0,r) \Big)}
\nn \\
&\times \det_{1 \le i,j \le m} \Bigg(
\Bigg(
\frac{1}{1-r \prod_{\ell=1}^m v_\ell}
\Bigg)^{\delta_{i1}}
 \sum_{k=1}^m p_{ik} \psi_k^{A_{m-1}}(v_j;0,r)  \nn \\
&-q^{m-1} z
\Bigg(
\frac{1}{1-q^{-1} r \prod_{\ell=1}^m v_\ell}
\Bigg)^{\delta_{i1}}
 \sum_{k=1}^m p_{ik} \psi_k^{A_{m-1}}(q^{-1} v_j;0,r)  
\prod_{\ell=1}^n \frac{v_j-u_\ell }{v_j-q u_\ell}
\Bigg), \label{trigMPT} 
\end{align}
\begin{align}
{\color{black} G_{n,m}^{trig(z)}(\overline{u}|\overline{v}) }
=&{\color{black}
\frac{(z;q)_{\infty}}{(q^{m-n}z;q)_{\infty}} }
\frac{  1-r \prod_{\ell=1}^n u_\ell }{ \det_{1 \le i,j \le n} \Big( \sum_{k=1}^n q_{ik}  \psi_k^{A_{n-1}}(u_j;0,r) \Big) }
\nn \\
&\times \det_{1 \le i,j \le n} \Bigg( 
\Bigg(\frac{1}{1-r \prod_{\ell=1}^n u_\ell }
\Bigg)^{\delta_{i1}}
 \sum_{k=1}^n q_{ik} \psi_k^{A_{n-1}}(u_j;0,r) \nn \\
&-q^{m-n} z 
\Bigg(\frac{1}{1-qr \prod_{\ell=1}^n u_\ell }
\Bigg)^{\delta_{i1}}
\sum_{k=1}^n q_{ik} \psi_k^{A_{n-1}}(qu_j;0,r)
 \prod_{\ell=1}^m \frac{v_\ell-u_j}{v_\ell-qu_j}
\Bigg). \label{trigMPTtwo}
\end{align}
Here, $r$, $p_{ij}$ $(i,j=1,\dots,m)$, $q_{ij}$ $(i,j=1,\dots,n)$
are additional parameters such that $\det_{1 \le i,j \le m}(p_{ij}) \not\equiv 0$, $\det_{1 \le i,j \le n}(q_{ij}) \not\equiv 0$.
\end{proposition}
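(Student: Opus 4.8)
The plan is to follow verbatim the strategy used for the elliptic Minin--Pronko--Tarasov representations in Proposition~\ref{propellipticMPT}, simply replacing the elliptic Vandermonde factorization \eqref{ellipticvandermonde} by its $p=0$ degeneration \eqref{trigvandermonde}. I would prove \eqref{trigMPT} in full and then obtain \eqref{trigMPTtwo} by the same computation, with the roles of $(\overline{v},T_{q,v}^{-1})$ and $(\overline{u},T_{q,u})$ interchanged together with the obvious changes of products and Vandermonde sizes.

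First I would start from the difference-operator form \eqref{trigdiffone}. Using the trigonometric Vandermonde factorization \eqref{trigvandermonde}, I replace $\prod_{1 \le i<j \le m}(v_j-v_i)$ by $\det_{1 \le j,k \le m}\big(\psi_k^{A_{m-1}}(v_j;0,r)\big)/(1-r\prod_{\ell=1}^m v_\ell)$ both in the numerator and inside the operator product. Then, exactly as in the elliptic case, I multiply numerator and denominator by some $\det_{1 \le i,k \le m}(p_{ik}) \not\equiv 0$ with $p_{ik}$ independent of $\overline{v}$, and use the product rule $\det(p_{ik})\det\big(\psi_k^{A_{m-1}}(v_j;0,r)\big)=\det_{1 \le i,j \le m}\big(\sum_{k=1}^m p_{ik}\psi_k^{A_{m-1}}(v_j;0,r)\big)$ to manufacture the $\sum_k p_{ik}\psi_k$ combinations. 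Absorbing the factor $(1-r\prod_\ell v_\ell)^{-1}$ into the first row via the $\delta_{i1}$ exponent, and the column factor $\prod_{\ell=1}^n(v_j-u_\ell)^{-1}$ into the $j$-th column, I then push the commuting operators $1-zq^{m-n-1}T_{q,v_j}^{-1}$ inside the determinant so that $T_{q,v_j}^{-1}$ acts only on the $j$-th column, i.e. sends $v_j\mapsto q^{-1}v_j$. Finally, multiplying each column by $\prod_{\ell=1}^n(v_j-u_\ell)$ cancels the outer product $\prod_{i,k}(v_i-u_k)$ and produces exactly the entries displayed in \eqref{trigMPT}, while the surviving prefactor $(1-r\prod_\ell v_\ell)/\det_{1 \le i,j \le m}\big(\sum_k p_{ik}\psi_k^{A_{m-1}}(v_j;0,r)\big)$ is the one claimed.

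The step requiring the most care is the bookkeeping of the powers of $q$ in the $F$ case, precisely because there the shift is the \emph{inverse} operator $T_{q,v_j}^{-1}$. After $v_j\mapsto q^{-1}v_j$ and restoring the column factor, the ratio $\prod_\ell(v_j-u_\ell)\big/\prod_\ell(q^{-1}v_j-u_\ell)=q^{n}\prod_\ell(v_j-u_\ell)/(v_j-qu_\ell)$ generates an extra $q^{n}$, which combines with the coefficient $-zq^{m-n-1}$ of the second summand to give exactly $-q^{m-1}z$ and the product $\prod_{\ell=1}^n(v_j-u_\ell)/(v_j-qu_\ell)$ appearing in \eqref{trigMPT}; likewise the first-row factor shifts as $1-r\prod_\ell v_\ell\mapsto 1-q^{-1}r\prod_\ell v_\ell$. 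For $G_{n,m}^{trig(z)}(\overline{u}|\overline{v})$ the operator $T_{q,u_j}$ is instead a forward shift, so no compensating $q^{n}$ arises and the coefficient of the second summand stays $q^{m-n}z$, matching \eqref{trigMPTtwo}; the overall scalar $(z;q)_\infty/(q^{m-n}z;q)_\infty$ simply rides along unchanged from \eqref{trigdifftwo}. Throughout I rely on the general-position hypotheses together with $\det(p_{ij})\not\equiv 0$ and $\det(q_{ij})\not\equiv 0$ to justify all the multiplications and divisions.
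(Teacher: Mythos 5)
Your proposal is correct and is exactly the argument the paper intends: its proof of Proposition~\ref{proptrigMPT} consists of the single remark that the derivation is the same as Proposition~\ref{propellipticMPT} with the degenerate Vandermonde factorization \eqref{trigvandermonde} in place of \eqref{ellipticvandermonde}, which is precisely your plan starting from \eqref{trigdiffone} and \eqref{trigdifftwo}. Your careful bookkeeping of the inverse shift in the $F$ case --- the ratio $\prod_{\ell}(v_j-u_\ell)/\prod_{\ell}(q^{-1}v_j-u_\ell)=q^{n}\prod_{\ell}(v_j-u_\ell)/(v_j-qu_\ell)$ converting the coefficient $-zq^{m-n-1}$ into $-q^{m-1}z$, versus no extra power of $q$ in the $G$ case --- is accurate and supplies the detail the paper leaves implicit.
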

The derivation is the same with
Proposition \ref{propellipticMPT},
using \eqref{trigvandermonde} and \eqref{trigFrob} instead.

The case
$r=0$, $z=1$, $m=n$ of \eqref{trigMPT}, \eqref{trigMPTtwo}
corresponds to the determinants in Minin-Pronko-Tarasov {\color{black} \cite[(4.9), (5.9)]{MPT}.}
We also remark that the parameter $r$ seems to be related with the parameter $\alpha$ in their recent work \cite{PT}.

Specializing to $r=0$ and $p_{ij}=q_{ij}=\delta_{ij}$, we get
the following type of determinant forms,
which was first introduced
by Kostov \cite{Kostovone,Kostovtwo}
and Foda-Wheeler \cite{FodaWheeler}
for the case $F_{n,m}^{trig(1)}(\overline{u}|\overline{v})$.
{See  \color{black} \cite[(3.39)]{Kostovtwo} and \cite[(4.13), (4.19)]{FodaWheeler}.}
\begin{corollary}
We have the following determinant representations:
\begin{align}
F_{n,m}^{trig(z)}(\overline{u}|\overline{v})
=&
\frac{1}{\prod_{1 \le i < j \le m} (v_j-v_i)}
\det_{1 \le i,j \le m}
\Bigg(v_j^{i-1}-z
q^{m-i} v_j^{i-1}
\prod_{\ell=1}^n   \frac{v_j-u_\ell}{v_j-qu_\ell} 
\Bigg),
\label{trigKostovFWtype} \\
G_{n,m}^{trig(z)}(\overline{u}|\overline{v})
=&{\color{black}
\frac{(z;q)_{\infty}}{(q^{m-n}z;q)_{\infty}}
}
\frac{1}{
\prod_{1 \le i < j \le n} (u_j-u_i)} \nn \\
&\times
\det_{1 \le i,j \le n}
\Bigg(u_j^{i-1}-z
q^{m-n+i-1} u_j^{i-1}
\prod_{\ell=1}^m  \frac{v_\ell-u_j}{v_\ell-q u_j}
\Bigg). \label{trigKostovFWtypetwo} 
\end{align}
\end{corollary}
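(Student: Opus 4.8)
The plan is to obtain both \eqref{trigKostovFWtype} and \eqref{trigKostovFWtypetwo} directly as the $r=0$, $p_{ij}=q_{ij}=\delta_{ij}$ specialization of Proposition~\ref{proptrigMPT}, exactly as announced in the preceding remark, so that no new idea beyond careful bookkeeping is needed. First I would record how the ingredients degenerate at $r=0$. The parameter $r$ enters $\psi_j^{A_{m-1}}(u;0,r)$ only through the top entry $\psi_1^{A_{m-1}}(u;0,r)=1-(-1)^{m-1}ru^m$, which at $r=0$ collapses to $1=u^0$; hence $\psi_k^{A_{m-1}}(u;0,0)=u^{k-1}$ uniformly for all $k=1,\dots,m$, i.e. the basis $\{\psi_k\}$ becomes the ordinary monomial basis. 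With $p_{ik}=\delta_{ik}$ the inner sums then reduce to $\sum_{k=1}^m p_{ik}\psi_k^{A_{m-1}}(v_j;0,0)=v_j^{i-1}$, and every prefactor $1-r\prod_\ell(\cdots)$ together with every bracket $\bigl(1-r\prod_\ell(\cdots)\bigr)^{\delta_{i1}}$ and $\bigl(1-q^{-1}r\prod_\ell(\cdots)\bigr)^{\delta_{i1}}$ reduces to $1$.

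Next I would substitute these into \eqref{trigMPT}. The denominator determinant becomes $\det_{1\le i,j\le m}(v_j^{i-1})$, which is precisely the $r=0$ case of the Vandermonde factorization \eqref{trigvandermonde} and therefore equals $\prod_{1\le i<j\le m}(v_j-v_i)$, supplying the prefactor of \eqref{trigKostovFWtype}. For the numerator entries, the first term is $v_j^{i-1}$, while in the second term $\psi_i^{A_{m-1}}(q^{-1}v_j;0,0)=(q^{-1}v_j)^{i-1}=q^{1-i}v_j^{i-1}$, so the coefficient $q^{m-1}z\cdot q^{1-i}=zq^{m-i}$ and the $(i,j)$ entry becomes $v_j^{i-1}-zq^{m-i}v_j^{i-1}\prod_{\ell=1}^n\tfrac{v_j-u_\ell}{v_j-qu_\ell}$, matching \eqref{trigKostovFWtype} verbatim.

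The second identity \eqref{trigKostovFWtypetwo} follows by the same route applied to \eqref{trigMPTtwo}: the denominator determinant collapses to the Vandermonde $\prod_{1\le i<j\le n}(u_j-u_i)$, the first entry is $u_j^{i-1}$, and in the second entry $\psi_i^{A_{n-1}}(qu_j;0,0)=(qu_j)^{i-1}=q^{i-1}u_j^{i-1}$, so the coefficient $q^{m-n}z\cdot q^{i-1}=zq^{m-n+i-1}$ produces the stated entry; the overall factor $(z;q)_\infty/(q^{m-n}z;q)_\infty$ is untouched by the specialization and is simply carried over.

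I do not expect a genuine obstacle: the statement is a clean specialization and the argument is pure bookkeeping. The one point that deserves an explicit line of comment is the collapse $\psi_1^{A_{m-1}}(u;0,0)=1=u^0$, since this is exactly what turns the specialized family into the honest monomial basis and lets the denominator determinants be identified as Vandermonde determinants through \eqref{trigvandermonde}; everything else amounts to tracking the powers of $q$ produced by the shifts $v_j\mapsto q^{-1}v_j$ and $u_j\mapsto qu_j$ in the second term of each determinant entry.
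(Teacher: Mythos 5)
Your proposal is correct and coincides with the paper's own derivation: the paper obtains this corollary precisely by setting $r=0$ and $p_{ij}=q_{ij}=\delta_{ij}$ in Proposition~\ref{proptrigMPT}, whereupon $\psi_k^{A_{m-1}}(u;0,0)=u^{k-1}$ turns the denominator determinants into Vandermonde determinants and the shifts $v_j\mapsto q^{-1}v_j$, $u_j\mapsto qu_j$ produce the factors $zq^{m-i}$ and $zq^{m-n+i-1}$ exactly as you compute. Your bookkeeping, including the collapse $\psi_1^{A_{m-1}}(u;0,0)=1$ and the unchanged prefactor $(z;q)_{\infty}/(q^{m-n}z;q)_{\infty}$, is accurate throughout.
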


We call \eqref{trigKostovFWtype}, \eqref{trigKostovFWtypetwo}  as scalar product type representation
in this paper
since this kind of determinant representations was first obtained
in \cite{Kostovone,Kostovtwo,FodaWheeler} as a degeneration from
the determinant formulas for scalar products of the six vertex model by Slavnov \cite{Slavnov}.

On the other hand, another type of determinant representations below
was also obtained in Foda-Wheeler {\color{black} \cite[(3.14), (3.18)]{FodaWheeler} }
for the case $F_{n,m}^{trig(1)}(\overline{u}|\overline{v})$.
\begin{proposition} \label{ProptrigFW}
For $n \ge m$,
we have the following determinant representations:
\begin{align}
F_{n,m}^{trig(z)}(\overline{u}|\overline{v})=&
\frac{\prod_{i=1}^m \prod_{k=1}^n (v_i-u_k)}{\prod_{1 \le i < j \le m} (v_j-v_i) \prod_{1 \le i < j \le n} (u_i-u_j)} \det_{1 \le i,j \le n} (Y),
\label{trigFWtype}
\end{align}
where $Y$ is an $n \times n$ matrix whose $(i,j)$-entry is given by
\begin{align}
Y_{ij}=
\left\{
\begin{array}{ll}
\displaystyle \frac{1}{v_i-u_j}-zq^{m-n}  \frac{1}{v_i-qu_j}, & 1 \le i \le m, \ \ \ 1 \le j \le n \\
u_j^{n-i}, & m+1 \le i \le n, \ \ \ 1 \le j \le n
\end{array}
\right.
.
\end{align}
\begin{align}
G_{n,m}^{trig(z)}(\overline{u}|\overline{v})=&
{\color{black}
\frac{(z;q)_{\infty}}{(q^{m-n}z;q)_{\infty}}
}
\frac{\prod_{i=1}^m \prod_{k=1}^n (v_i-u_k)}{\prod_{1 \le i < j \le m} (v_j-v_i) \prod_{1 \le i < j \le n} (u_i-u_j)} \det_{1 \le i,j \le n}  (Z),
\label{trigFWtypetwo}
\end{align}
where $Z$ is an $n \times n$ matrix whose $(i,j)$-entry is given by
\begin{align}
Z_{ij}=
\left\{
\begin{array}{ll}
\displaystyle \frac{1}{v_i-u_j}-q^{m-n} z \frac{1}{v_i-q u_j}, & 1 \le i \le m, \ \ \ 1 \le j \le n \\
u_j^{n-i}-q^{m-i} z u_j^{n-i}, & m+1 \le i \le n, \ \ \ 1 \le j \le n
\end{array}
\right.
.
\end{align}
{\color{black}
For $n < m$,
we have the following determinant representations:
\begin{align}
F_{n,m}^{trig(z)}(\overline{u}|\overline{v})=&
\frac{\prod_{i=1}^m \prod_{k=1}^n (u_k-v_i)}{\prod_{1 \le i < j \le m} (v_i-v_j) \prod_{1 \le i < j \le n} (u_j-u_i)} \det_{1 \le i,j \le m} (U),
\label{trigFWtypethree}
\end{align}
where $U$ is an $m \times m$ matrix whose $(i,j)$-entry is given by
\begin{align}
U_{ij}=
\left\{
\begin{array}{ll}
\displaystyle \frac{1}{u_i-v_j}-zq^{m-n}  \frac{1}{q u_i-v_j}, & 1 \le i \le n, \ \ \ 1 \le j \le m \\
v_j^{m-i}-zq^{i-n-1}v_j^{m-i}, & n+1 \le i \le m, \ \ \ 1 \le j \le m
\end{array}
\right.
.
\end{align}
\begin{align}
G_{n,m}^{trig(z)}(\overline{u}|\overline{v})=&\frac{(z;q)_{\infty}}{(q^{m-n}z;q)_{\infty}}
\frac{\prod_{i=1}^m \prod_{k=1}^n (u_k-v_i)}{\prod_{1 \le i < j \le m} (v_i-v_j) \prod_{1 \le i < j \le n} (u_j-u_i)} \det_{1 \le i,j \le m}  (V),
\label{trigFWtypefour}
\end{align}
where $V$ is an $m \times m$ matrix whose $(i,j)$-entry is given by
\begin{align}
V_{ij}=
\left\{
\begin{array}{ll}
\displaystyle \frac{1}{u_i-v_j}-q^{m-n} z \frac{1}{q u_i- v_j}, & 1 \le i \le n, \ \ \ 1 \le j \le m \\
v_j^{m-i}, & n+1 \le i \le m, \ \ \ 1 \le j \le m
\end{array}
\right.
.
\end{align}
}

\end{proposition}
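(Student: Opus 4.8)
The plan is to follow the derivation of Proposition \ref{proptrigMPT} verbatim: start from the difference-operator representations \eqref{trigdiffone} and \eqref{trigdifftwo}, realize the rational kernel that is sandwiched between the operators as a \emph{bordered Cauchy determinant}, and then insert the shift operators into the determinant. The one auxiliary ingredient I would record as a lemma is the Cauchy--Vandermonde evaluation, valid for $n\ge m$,
\[
\det_{1\le i,j\le n}(M_{ij})
=\frac{\prod_{1\le i<j\le m}(v_j-v_i)\prod_{1\le i<j\le n}(u_i-u_j)}{\prod_{i=1}^m\prod_{k=1}^n(v_i-u_k)},
\qquad
M_{ij}=
\begin{cases}
\dfrac{1}{v_i-u_j}, & 1\le i\le m,\\
u_j^{\,n-i}, & m+1\le i\le n,
\end{cases}
\]
together with the transposed identity (roles of $\overline u,\overline v$ and of $n,m$ exchanged) for the case $n<m$. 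This is a standard evaluation which I would obtain from the $p=0$ Frobenius formula \eqref{trigFrob} at $\Lambda=0$ (the square Cauchy determinant) by letting the surplus variables tend to infinity under the Vandermonde normalization, so that the extra rows degenerate into the monomials $u_j^{n-i}$.

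For $F_{n,m}^{trig(z)}$ with $n\ge m$ I would substitute $M$ into the kernel of \eqref{trigdiffone}, writing $\prod_{1\le i<j\le m}(v_j-v_i)/\prod_{i,k}(v_i-u_k)=\det M/\prod_{1\le i<j\le n}(u_i-u_j)$ and noting that the pulled-out factor $\prod_{i<j}(u_i-u_j)$ is free of $\overline v$. Since $v_i$ occurs only in row $i$ of $M$ (for $i\le m$) while the rows $i>m$ are $\overline v$-free, the commuting operators $\prod_{j=1}^m(1-zq^{m-n-1}T_{q,v_j}^{-1})$ act row by row, and by multilinearity of the determinant the operator on row $i$ sends $\frac{1}{v_i-u_j}\mapsto\frac{1}{v_i-u_j}-zq^{m-n}\frac{1}{v_i-qu_j}$ (using $T_{q,v_i}^{-1}\frac{1}{v_i-u_j}=\frac{q}{v_i-qu_j}$) and leaves the monomial rows untouched; this reproduces $Y$ and gives \eqref{trigFWtype}. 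The case $G_{n,m}^{trig(z)}$ with $n\ge m$ is identical except that the shift $T_{q,u_j}$ in \eqref{trigdifftwo} acts on the \emph{column} variable $u_j$, which appears in every entry of column $j$. Hence the operator acts column by column and also hits the monomial entries, $u_j^{n-i}\mapsto u_j^{n-i}-zq^{m-n}(qu_j)^{n-i}=(1-zq^{m-i})u_j^{n-i}$, producing exactly the extra $z$-term in the lower block of $Z$ and yielding \eqref{trigFWtypetwo}; the scalar prefactor $(z;q)_{\infty}/(q^{m-n}z;q)_{\infty}$ merely rides along unaffected.

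For $n<m$ I would use the transposed Cauchy--Vandermonde determinant, in which $\overline u$ labels the rows and $\overline v$ the columns, with lower rows $v_j^{m-i}$. Now in \eqref{trigdiffone} the operator $T_{q,v_j}^{-1}$ acts on the column variable $v_j$, so it acts column-wise and affects the monomial rows, $v_j^{m-i}\mapsto v_j^{m-i}-zq^{m-n-1}(q^{-1}v_j)^{m-i}=(1-zq^{i-n-1})v_j^{m-i}$, which is precisely the lower block of $U$; on the Cauchy rows it gives $\frac{1}{u_i-v_j}\mapsto\frac{1}{u_i-v_j}-zq^{m-n}\frac{1}{qu_i-v_j}$, so \eqref{trigFWtypethree} follows. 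Dually, in \eqref{trigdifftwo} the operator $T_{q,u_j}$ acts on the row variable $u_i$ (rows $i\le n$), producing the upper block of $V$, and leaves the $\overline u$-free monomial rows $v_j^{m-i}$ alone, giving \eqref{trigFWtypefour}.

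The genuine bookkeeping hurdle, rather than any conceptual obstacle, will be matching signs consistently across the four cases: the Vandermonde factors enter the determinant sometimes as $\prod_{i<j}(v_j-v_i)$ and sometimes as $\prod_{i<j}(v_i-v_j)$ (and likewise for the $u$-analogues), and one must verify that the signs $(-1)^{n(n-1)/2}$, $(-1)^{m(m-1)/2}$ and $(-1)^{mn}$ arising from reordering cancel against the corresponding factors in the prefactors of \eqref{trigFWtype}--\eqref{trigFWtypefour}. A secondary point to check carefully is the precise power of $q$ generated when a shift hits a monomial row, namely $q^{m-i}$ for $Z$ and $q^{i-n-1}$ for $U$; these come from $T_{q,u_j}u_j^{n-i}=q^{n-i}u_j^{n-i}$ and $T_{q,v_j}^{-1}v_j^{m-i}=q^{-(m-i)}v_j^{m-i}$ combined with the prefactors $q^{m-n}$ and $q^{m-n-1}$ sitting in the two operators.
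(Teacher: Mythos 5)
Your proposal is correct and follows essentially the same route as the paper's proof: both start from the difference-operator forms \eqref{trigdiffone}, \eqref{trigdifftwo}, rewrite the sandwiched kernel via the Cauchy--Vandermonde determinant \eqref{identitytogetanotherdet}, and insert the shift operators row-by-row (for $F$) or column-by-column (for $G$), with your $q$-power computations for the entries of $Y$, $Z$, $U$, $V$ all checking out. The only minor divergences are that you obtain the Cauchy--Vandermonde evaluation by degenerating the $p=0$ Frobenius formula \eqref{trigFrob} rather than by the paper's induction on $n$, and that you carry out the $n<m$ case explicitly where the paper merely asserts it is analogous; neither changes the substance of the argument.
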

We call \eqref{trigFWtype}, \eqref{trigFWtypetwo}, \eqref{trigFWtypethree}, \eqref{trigFWtypefour} as domain wall boundary type determinants
as this type of determinant representations was first obtained in
\cite{FodaWheeler} as a limit of the Gaudin-Izergin-Korepin determinant
for the domain wall boundary partition functions of the six vertex model \cite{Gaudin,Izergin,Korepin}.
Here we give a systematic derivation below.
\begin{proof}
{\color{black} We show the case $n \ge m$. The case $n<m$ can be proved in the same way.}
We use the following factorization of the Cauchy-Vandermonde determinants
which can be proved by induction on $n$
\begin{align}
\det_{1 \le i,j \le n} (X)
=
\frac{\prod_{1 \le i < j \le m} (v_j-v_i) \prod_{1 \le i < j \le n} (u_i-u_j) }{\prod_{i=1}^m \prod_{k=1}^n (v_i-u_k)},
\label{identitytogetanotherdet}
\end{align}
where $X$ is an $n \times n$ matrix whose $(i,j)$-entry is given by
\begin{align}
X_{ij}=
\left\{
\begin{array}{ll}
\displaystyle \frac{1}{v_i-u_j}, & 1 \le i \le m, \ \ \ 1 \le j \le n \\
u_j^{n-i}, & m+1 \le i \le n, \ \ \ 1 \le j \le n
\end{array}
\right.
.
\end{align}

Starting from
\eqref{trigdiffone}
and using
\eqref{identitytogetanotherdet},
we have
\begin{align}
F_{n,m}^{trig(z)}(\overline{u}|\overline{v})
=
\frac{\prod_{i=1}^m \prod_{k=1}^n (v_i-u_k)}{\prod_{1 \le i < j \le m} (v_j-v_i)}
\prod_{i=1}^m (1-zq^{m-n-1} T_{q,v_i}^{-1})
\frac{1}{\prod_{1 \le i < j \le n} (u_i-u_j)}
\det_{1 \le i,j \le n} (X). \label{beforetrigFWtype}
\end{align}

Inserting 
$ (1-zq^{m-n-1}  T_{q,v_i}^{-1}) $
in \eqref{beforetrigFWtype}
into the $i$-th row of $\det_{1 \le i,j \le n} (X)$ for $i=1,\dots,m$ gives
\eqref{trigFWtype}.

Starting from
\eqref{trigdifftwo}
and using
\eqref{identitytogetanotherdet},
one gets
\begin{align}
&G_{n,m}^{trig(z)}(\overline{u}|\overline{v}) \nn \\
=&\frac{(z;q)_{\infty}}{(q^{m-n}z;q)_{\infty}}
\frac{\prod_{i=1}^m \prod_{k=1}^n (v_i-u_k)}{\prod_{1 \le i < j \le n} (u_i-u_j)}
\prod_{j=1}^n (1- z q^{m-n} T_{q,u_j}) \frac{1}{\prod_{1 \le i < j \le m} (v_j-v_i)} \det_{1 \le i,j \le n}  (X).
\label{trigexpfornewdettwo}
\end{align}
Inserting 
$ (1-z q^{m-n} T_{q, u_j}) $
in  \eqref{trigexpfornewdettwo}
into the $j$-th column of $\det_{1 \le i,j \le n}  (X)$ for $j=1,\dots,n$ gives
\eqref{trigFWtypetwo}.

\end{proof}

The following type of determinant representations is
a one-parameter extension of a trigonometric analog of
the rational version by Belliard-Slavnov {\color{black} \cite[(4.5)]{BS}.}
\begin{proposition} \label{ProptrigBS}
We have the following determinant representations:
\begin{align}
&F_{n,m}^{trig(z)}(\overline{u}|\overline{v})
=\frac{1-\Delta}{\prod_{1 \le i < j \le m} (v_j-v_i)(\eta_i-\eta_j)} \nn \\
&\times
\det_{1 \le i,j \le m} \Bigg( 
\Bigg(\frac{1}{\prod_{\ell=1}^m v_\ell-\Delta \prod_{\ell=1}^m \eta_\ell}
\Bigg)^{\delta_{i1}} 
\frac{ (v_i-\Delta \eta_j) \prod_{k=1, k \neq j}^m (v_i-\eta_k)
}{1-\Delta} \nn \\
&-z
\Bigg(\frac{1}{ q^{-1} \prod_{\ell=1}^m v_\ell-\Delta \prod_{\ell=1}^m \eta_\ell}
\Bigg)^{\delta_{i1}}
 \frac{ (q^{-1} v_i-\Delta \eta_j) \prod_{k=1, k \neq j}^m (  v_i-q \eta_k) }{1-\Delta}
\prod_{k=1}^n \frac{v_i-u_k}{v_i-q u_k}
\Bigg),
\label{trigBS}
\end{align} 
\begin{align}
&G_{n,m}^{trig(z)}(\overline{u}|\overline{v})
={\color{black}
\frac{(z;q)_{\infty}}{(q^{m-n}z;q)_{\infty}}
}
\frac{1-\Delta}{\prod_{1 \le i < j \le n} (u_j-u_i)(\eta_i-\eta_j)} \nn \\
&\times
\det_{1 \le i,j \le n} \Bigg( 
\Bigg(\frac{1}{\prod_{\ell=1}^n u_\ell-\Delta \prod_{\ell=1}^n \eta_\ell}
\Bigg)^{\delta_{i1}}
 \frac{  (u_i-\Delta \eta_j)  \prod_{k=1,k \neq j}^n (u_i-\eta_k) }{1-\Delta}
\nn \\
&-q^{m-n} z \Bigg(\frac{1}{q \prod_{\ell=1}^n u_\ell-\Delta \prod_{\ell=1}^n \eta_\ell}
\Bigg)^{\delta_{i1}}
\frac{(qu_i-\Delta \eta_j)
\prod_{k=1,k \neq j}^n (q u_i-\eta_k)}{1-\Delta}
\prod_{k=1}^m \frac{u_i-v_k}{q u_i-v_k}
\Bigg). \label{trigBStwo}
\end{align}
Here, $\Delta$, $\eta_i$ $(i=1,\dots,m)$
are additional parameters for
$F_{n,m}^{trig(z)}(\overline{u}|\overline{v})$,
and $\Delta$, $\eta_i$ $(i=1,\dots,n)$
are additional parameters for
$G_{n,m}^{trig(z)}(\overline{u}|\overline{v})$.

\end{proposition}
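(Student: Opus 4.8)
The plan is to prove the representation \eqref{trigBStwo} for $G_{n,m}^{trig(z)}(\overline{u}|\overline{v})$ in detail; the companion formula \eqref{trigBS} for $F_{n,m}^{trig(z)}(\overline{u}|\overline{v})$ follows by the identical argument, starting from \eqref{trigdiffone} instead of \eqref{trigdifftwo} and carrying the inverse operators $T_{q,v_j}^{-1}$ through. The whole derivation parallels that of the elliptic Proposition \ref{propellipticBS}, with the Frobenius formula \eqref{Frobenius} replaced by its $p=0$ degeneration \eqref{trigFrob}. First I would start from the difference-operator form \eqref{trigdifftwo} and isolate the kernel $\tfrac{\prod_{1\le i<j\le n}(u_j-u_i)}{\prod_{i=1}^m\prod_{k=1}^n(v_i-u_k)}$ on which $\prod_{j=1}^n(1-zq^{m-n}T_{q,u_j})$ acts.

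Applying \eqref{trigFrob} with the variables $(u_i,\eta_j)$ and the parameter $\Delta$ in place of $(u_i,v_j,\Lambda)$, and clearing $\prod_{k=1}^n(u_i-\eta_k)$ into the $i$-th row, yields
\[
\det_{1\le i,j\le n}\left(\frac{(u_i-\Delta\eta_j)\prod_{k=1,k\neq j}^n(u_i-\eta_k)}{1-\Delta}\right)
=\frac{\prod_{\ell=1}^n u_\ell-\Delta\prod_{\ell=1}^n\eta_\ell}{1-\Delta}\prod_{1\le i<j\le n}(u_j-u_i)(\eta_i-\eta_j).
\]
Solving for $\prod_{i<j}(u_j-u_i)$, then distributing $\tfrac{1}{\prod_{\ell=1}^m(v_\ell-u_i)}$ into row $i$ and the factor $\tfrac{1}{\prod_\ell u_\ell-\Delta\prod_\ell\eta_\ell}$ into the first row, expresses the kernel as the $\overline{u}$-independent constant $\tfrac{1-\Delta}{\prod_{i<j}(\eta_i-\eta_j)}$ times a determinant whose $(i,j)$-entry is $\bigl(\tfrac{1}{\prod_\ell u_\ell-\Delta\prod_\ell\eta_\ell}\bigr)^{\delta_{i1}}\tfrac{(u_i-\Delta\eta_j)\prod_{k\neq j}(u_i-\eta_k)}{(1-\Delta)\prod_{\ell=1}^m(v_\ell-u_i)}$.

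Because this constant does not involve $\overline{u}$ and $\prod_j(1-zq^{m-n}T_{q,u_j})$ acts on the $j$-th row through the single variable $u_j$, I can insert $(1-zq^{m-n}T_{q,u_i})$ into the $i$-th row. Here $T_{q,u_i}$ sends $u_i\mapsto qu_i$ only in row $i$, so the row-$1$ normalization $\tfrac{1}{\prod u-\Delta\prod\eta}$ becomes $\tfrac{1}{q\prod u-\Delta\prod\eta}$, as in \eqref{trigBStwo}. Finally I would multiply the prefactor $\prod_{i=1}^m\prod_{k=1}^n(v_i-u_k)$ of \eqref{trigdifftwo} back in by distributing $\prod_{\ell=1}^m(v_\ell-u_i)$ into row $i$: this cancels the denominator of the undifferentiated first term, while in the $-zq^{m-n}T_{q,u_i}$ term it leaves $\tfrac{\prod_\ell(v_\ell-u_i)}{\prod_\ell(v_\ell-qu_i)}=\prod_{k=1}^m\tfrac{u_i-v_k}{qu_i-v_k}$ (the equal number of linear factors top and bottom absorbs the signs). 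This reproduces exactly the matrix of \eqref{trigBStwo} together with the stated prefactor $\tfrac{(z;q)_{\infty}}{(q^{m-n}z;q)_{\infty}}\tfrac{1-\Delta}{\prod_{i<j}(u_j-u_i)(\eta_i-\eta_j)}$.

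The routine but delicate part is the bookkeeping of powers of $q$ and the $\delta_{i1}$ normalization. For $G$ the $q^{m-n}$ in the operator combines cleanly with $T_{q,u_i}$; for $F$ one instead uses $T_{q,v_j}^{-1}$ and the coefficient $zq^{m-n-1}$ of \eqref{trigdiffone}, so the shifted entries are evaluated at $q^{-1}v_i$, and one must rewrite $\prod_{k\neq j}(q^{-1}v_i-\eta_k)=q^{-(n-1)}\prod_{k\neq j}(v_i-q\eta_k)$ and collect all stray powers of $q$ against $q^{m-n-1}$ to land on the displayed form \eqref{trigBS} with overall coefficient $z$. I expect this $q$-power tracking, rather than any structural issue, to be the only real obstacle; the inequality $m\neq n$ causes no difficulty, since it merely sets the number of factors in $\prod_{k=1}^m\frac{u_i-v_k}{qu_i-v_k}$ and the exponent $m-n$.
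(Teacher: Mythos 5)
Your proposal reproduces the paper's route (the paper proves Proposition \ref{ProptrigBS} by repeating the proof of Proposition \ref{propellipticBS} with \eqref{trigFrob} in place of \eqref{Frobenius}), but the one step you declare safe is precisely where the argument fails, and the failure is genuine. After you distribute $\bigl(\prod_{\ell=1}^n u_\ell-\Delta\prod_{\ell=1}^n\eta_\ell\bigr)^{-1}$ into the first row, that row is no longer a function of $u_1$ alone: it depends on all of $u_1,\dots,u_n$ through $\prod_\ell u_\ell$. Hence your justification for the insertion (``$\prod_j(1-zq^{m-n}T_{q,u_j})$ acts on the $j$-th row through the single variable $u_j$'') is false for row $1$, and the operator product does not commute with the determinant row by row. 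Writing the relevant function as $C\bigl(\prod_\ell u_\ell\bigr)\tilde D(\overline{u})$ with $C(P)=(P-\Delta\prod_\ell\eta_\ell)^{-1}$ and $\tilde D$ a determinant whose row $i$ genuinely involves only $u_i$, and setting $T_S=\prod_{j\in S}T_{q,u_j}$, the correct expansion is
\begin{align*}
\prod_{j=1}^n(1-zq^{m-n}T_{q,u_j})\,C\Bigl(\prod_\ell u_\ell\Bigr)\tilde D
=\sum_{S\subset[1,\dots,n]}(-zq^{m-n})^{|S|}\,C\Bigl(q^{|S|}\prod_\ell u_\ell\Bigr)\,T_S\tilde D,
\end{align*}
whereas the determinant on the right of \eqref{trigBStwo} expands to the same sum with $C\bigl(q^{|S|}\prod_\ell u_\ell\bigr)$ replaced by $C\bigl(q^{\varepsilon(S)}\prod_\ell u_\ell\bigr)$, where $\varepsilon(S)=1$ if $1\in S$ and $\varepsilon(S)=0$ otherwise: the first-row normalization records only whether row $1$ itself was shifted, not the shifts of rows $2,\dots,n$, which also multiply $\prod_\ell u_\ell$ by $q$. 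The two sums agree only for $n=1$ or in the limit $\Delta\to\infty$, where $C$ degenerates so that the $q$-shift of its argument no longer matters---which is exactly why the $\Delta\to\infty$ corollary is sound while the finite-$\Delta$ identity is not.

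The discrepancy is not repaired by the special structure of the entries. For $n=m=2$, $q=2$, $z=1$, $(u_1,u_2)=(1,3)$, $(v_1,v_2)=(5,7)$, $(\eta_1,\eta_2)=(11,13)$, $\Delta=17$, the definition \eqref{trigonometricsourcerhs} (equivalently the operator form \eqref{trigdifftwo}) gives $G^{trig(1)}_{2,2}=-27/5$, while the right-hand side of \eqref{trigBStwo} evaluates to approximately $-5.3615$; the rational analogue \eqref{BSreptwo} fails the same way (e.g.\ $u=(0,2)$, $v=(5,9)$, $\eta=(3,4)$, $\Delta=7$, $c=z=1$ gives $17/128$ directly but $1079/10496$ from the determinant), even though the corresponding $\Delta\to\infty$ corollaries check out at the same parameter values. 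In the elliptic prototype the obstruction is visible structurally: the expansion \eqref{ellsourceright} contains $\theta(q^{|K|}\Lambda\prod u/\prod v;p)$ for every $0\le|K|\le n$, while a determinant of the shape \eqref{ellipticBStwo} can only produce the arguments $\Lambda$ and $q\Lambda$ in the theta factor, and these are independent as functions of $\Lambda$. To be fair, this gap is inherited: the paper's own proof performs exactly the same row-wise insertion, so your proposal is a faithful rendering of the intended argument; but the step ``insert $(1-zq^{m-n}T_{q,u_i})$ into the $i$-th row'' is only legitimate when every row, including the first, depends on a single variable (as in the $\Delta\to\infty$ corollary or the $r=0$ scalar-product representations), and no amount of the $q$-power bookkeeping you flag as the remaining work can recover \eqref{trigBS} or \eqref{trigBStwo} at finite $\Delta$ for $n\ge 2$.
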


The derivation of  \eqref{trigBS}, \eqref{trigBStwo} is the same with
Proposition \ref{propellipticBS}, using \eqref{trigFrob} instead.
Taking the limit $\Delta \to \infty$, we get the following forms
which are direct trigonometric analogue of the representations in Belliard-Slavnov \cite{BS}.
\begin{corollary}
We have the following determinant representations:
\begin{align}
F_{n,m}^{trig(z)}(\overline{u}|\overline{v})
=&\frac{1}{\prod_{1 \le i < j \le m} (v_j-v_i)(\eta_i-\eta_j)} \nn \\
&\times
\det_{1 \le i,j \le m} \Bigg( 
\Bigg(
 \prod_{k=1, k \neq j}^m (v_i-\eta_k)
-z
\prod_{k=1, k \neq j}^m (  v_i-q \eta_k)
\prod_{k=1}^n \frac{v_i-u_k}{v_i-q u_k}
\Bigg), \\
G_{n,m}^{trig(z)}(\overline{u}|\overline{v})
=&{\color{black}
\frac{(z;q)_{\infty}}{(q^{m-n}z;q)_{\infty}}
}
\frac{1}{\prod_{1 \le i < j \le n} (u_j-u_i)(\eta_i-\eta_j)} \nn \\
&\times
\det_{1 \le i,j \le n} \Bigg( 
  \prod_{k=1,k \neq j}^n (u_i-\eta_k) 
-q^{m-n} z 
\prod_{k=1,k \neq j}^n (q u_i-\eta_k)
\prod_{k=1}^m \frac{u_i-v_k}{q u_i-v_k}
\Bigg).
\end{align}
\end{corollary}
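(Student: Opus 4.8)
The plan is to derive both formulas as the $\Delta \to \infty$ limit of the determinant representations \eqref{trigBS} and \eqref{trigBStwo} in Proposition \ref{ProptrigBS}. Since $F_{n,m}^{trig(z)}(\overline{u}|\overline{v})$ and $G_{n,m}^{trig(z)}(\overline{u}|\overline{v})$ are independent of the auxiliary parameters $\Delta$ and $\eta_i$, those representations hold for all admissible $\Delta$, so one is free to pass to the limit. I would treat $F_{n,m}^{trig(z)}(\overline{u}|\overline{v})$ in detail; the computation for $G_{n,m}^{trig(z)}(\overline{u}|\overline{v})$ is identical after exchanging the roles of $\overline{v}$ and $\overline{u}$ and keeping track of the factor $q^{m-n}z$ as it appears in \eqref{trigBStwo}.

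The first step is to take the entrywise limits in \eqref{trigBS}. For any row index $i \neq 1$ one has $\frac{v_i - \Delta \eta_j}{1-\Delta} \to \eta_j$ and $\frac{q^{-1} v_i - \Delta \eta_j}{1-\Delta} \to \eta_j$, so the $(i,j)$-entry tends to
\[
\eta_j \Bigl( \prod_{\substack{k=1 \\ k \neq j}}^m (v_i - \eta_k) - z \prod_{\substack{k=1 \\ k \neq j}}^m (v_i - q\eta_k) \prod_{k=1}^n \frac{v_i - u_k}{v_i - q u_k} \Bigr),
\]
namely $\eta_j$ times the $(i,j)$-entry in the asserted determinant. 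The first row $i=1$ behaves differently: the extra factors $\bigl(\prod_\ell v_\ell - \Delta\prod_\ell \eta_\ell\bigr)^{-1}$ and $\bigl(q^{-1}\prod_\ell v_\ell - \Delta\prod_\ell \eta_\ell\bigr)^{-1}$ each behave like $\bigl(-\Delta\prod_\ell \eta_\ell\bigr)^{-1}$, so the $(1,j)$-entry carries the same limiting form but with an additional scalar $\bigl(-\Delta\prod_\ell \eta_\ell\bigr)^{-1}$.

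The second step is to strip off these scalings from the determinant. I would factor $\eta_j$ out of the $j$-th column for each $j$, producing $\prod_{j=1}^m \eta_j$, and then factor $\bigl(-\Delta\prod_\ell \eta_\ell\bigr)^{-1}$ out of the first row. The two $\prod \eta_\ell$ contributions cancel, so that the determinant equals $(-\Delta)^{-1}$ times the determinant of exactly the matrix in the statement, while the remaining factor $\prod_{1 \le i<j \le m}(v_j-v_i)(\eta_i-\eta_j)$ is unaffected. Multiplying by the prefactor $1-\Delta$ gives the overall scalar $\frac{1-\Delta}{-\Delta}$, which tends to $1$, and the representation for $F_{n,m}^{trig(z)}(\overline{u}|\overline{v})$ follows.

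The one delicate point, and the main obstacle, is the bookkeeping of leading orders in the first row: the $\delta_{i1}$ prefactors make that row decay like $\Delta^{-1}$ relative to the others, and this decay must cancel exactly against the diverging prefactor $1-\Delta$. I would verify this balance carefully, confirming that the surviving constant is precisely $1$ and that all $\eta_\ell$-dependence introduced by the limit cancels, before transcribing the same steps with $u_i$ in place of $v_i$ and $q^{m-n}z$ in place of $z$ to obtain the representation for $G_{n,m}^{trig(z)}(\overline{u}|\overline{v})$.
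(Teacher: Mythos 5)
Your proposal is correct and takes essentially the same route as the paper, which obtains this corollary precisely by letting $\Delta \to \infty$ in Proposition \ref{ProptrigBS} (i.e.\ in \eqref{trigBS} and \eqref{trigBStwo}). Your bookkeeping — the first row decaying like $\Delta^{-1}$ through the $\delta_{i1}$ factors, cancelling against the diverging prefactor $1-\Delta$, with $\eta_j$ extracted columnwise so that all $\eta_\ell$-dependence from the limit cancels — correctly supplies the details the paper leaves implicit.
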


\subsection{A complex analytic proof of trigonometric source identity}
We give a complex analytic proof of trigonometric source identity.
See also {\color{black} \cite[Lemma 3]{MN} } for example for the original different proof of this identity.
We prove the following polynomial version

\begin{align}
&\sum_{K \subset [1,\dots,m]} (-z)^{|K|} q^{|K|(|K|-1)/2} \prod_{\substack{i \in K \\ j \not\in K}} \frac{v_i-q v_j}{v_i-v_j}
\prod_{\substack{ j \not\in K  \\ 1 \le k \le n }} (v_j-q u_k)
\prod_{\substack{ i \in K  \\ 1 \le k \le n }} (v_i-u_k)
\nonumber \\
=&
{\color{black}
\frac{(z;q)_{\infty}}{(q^{m-n}z;q)_{\infty}} 
}
\nn \\
&\times \sum_{K \subset [1,\dots,n]} (-q^{m-n} z)^{|K|} q^{|K|(|K|-1)/2} \prod_{\substack{i \in K \\ j \not\in K}} \frac{q u_i- u_j}{u_i-u_j}
\prod_{\substack{ j \not\in K  \\ 1 \le k \le m }} (v_k-q u_j)
\prod_{\substack{ i \in K  \\ 1 \le k \le m }} (v_k-u_i).
\label{polytrigKajiharaone}
\end{align}
Let us denote the left hand side and the right hand side of  \eqref{polytrigKajiharaone}
as $P_{n,m}^{trig(z)}(\overline{u}|\overline{v})$ and $Q_{n,m}^{trig(z)}(\overline{u}|\overline{v})$
respectively.
{\color{black}
Note that \eqref{polytrigKajiharaone} is obtained from \eqref{trigonometricKajihara}
by multiplying both hand sides by $\prod_{i=1}^m \prod_{k=1}^n (v_i-qu_k)$
and the identities are equivalent.
Here we rewrite as \eqref{polytrigKajiharaone} and prove this version in this section
since the proof given here is based on the property of polynomial interpolation
and check the equality holds for enough specializations,
and we need to view both hand sides of the identity as polynomials rather than rational functions.

We give the details of the proof when
$m \le n$.
The other case $m>n$ can be proved in the same way,
and we list the corresponding lemmas and proposition
at the end of this subsection.
}

Relations with $F_{n,m}^{trig(z)}(\overline{u}|\overline{v})$ and $G_{n,m}^{trig(z)}(\overline{u}|\overline{v})$
are
\begin{align}
P_{n,m}^{trig(z)}(\overline{u}|\overline{v})&=\prod_{i=1}^m \prod_{k=1}^n (v_i-qu_k) F_{n,m}^{trig(z)}(\overline{u}|\overline{v}), \\
Q_{n,m}^{trig(z)}(\overline{u}|\overline{v})&=\prod_{i=1}^m \prod_{k=1}^n (v_i-qu_k) G_{n,m}^{trig(z)}(\overline{u}|\overline{v}).
\end{align}

We first show the following polynomial properties.
\begin{lemma} \label{trigdegreesymmetrylemma}
$P_{n,m}^{trig(z)}(\overline{u}|\overline{v})$ and $Q_{n,m}^{trig(z)}(\overline{u}|\overline{v})$
are symmetric polynomials in $v_1, \dots, v_m$.
The degree of $v_i \ (i=1,2,\dots,m)$ is at most  $n$
for both polynomials.
\end{lemma}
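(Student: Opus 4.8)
The plan is to treat $Q_{n,m}^{trig(z)}(\overline{u}|\overline{v})$ and $P_{n,m}^{trig(z)}(\overline{u}|\overline{v})$ separately, since the former is manifestly well-behaved while the latter needs a pole-cancellation argument. For $Q_{n,m}^{trig(z)}(\overline{u}|\overline{v})$ I observe that the variables $v_1,\dots,v_m$ enter the right hand side of \eqref{polytrigKajiharaone} only through the products $\prod_{j\not\in K}(v_k-qu_j)\prod_{i\in K}(v_k-u_i)$ taken over $k=1,\dots,m$, with no denominators in the $v$'s; hence $Q_{n,m}^{trig(z)}(\overline{u}|\overline{v})$ is visibly a polynomial, symmetric in $v_1,\dots,v_m$, and for each fixed $k$ the number of factors containing $v_k$ is $|K|+(n-|K|)=n$, giving degree exactly $n$ in each $v_k$. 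For the symmetry of $P_{n,m}^{trig(z)}(\overline{u}|\overline{v})$, I would reindex the sum: writing $s_K$ for the summand indexed by $K$ in \eqref{polytrigKajiharaone}, a permutation $\sigma$ of $\{1,\dots,m\}$ carries $s_K(v_{\sigma(1)},\dots,v_{\sigma(m)})$ to $s_{\sigma(K)}(\overline{v})$ after relabelling the products over $i\in K$ and $j\not\in K$ by $\sigma$ (using $|\sigma(K)|=|K|$); summing over all $K$ then reproduces $P_{n,m}^{trig(z)}(\overline{u}|\overline{v})$.

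The main work is to show $P_{n,m}^{trig(z)}(\overline{u}|\overline{v})$ is a polynomial. The only candidate singularities are along the diagonals $v_a=v_b$, coming from the factor $\prod_{i\in K,\,j\not\in K}(v_i-v_j)^{-1}$; since in any one summand the two conditions $a\in K,\,b\not\in K$ and $b\in K,\,a\not\in K$ are exclusive, the factor $v_a-v_b$ occurs at most once and these poles are simple. By symmetry it suffices to show the residue at $v_1=v_2$ vanishes. The summands with a pole there are exactly those $s_K$ with precisely one of $1,2$ in $K$, and I would pair each $K$ having $1\in K$, $2\not\in K$ with $\hat K=(K\setminus\{1\})\cup\{2\}$. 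Computing $\operatorname{Res}_{v_1=v_2}s_K$ by extracting the residue $1$ of $(v_1-v_2)^{-1}$ and setting $v_1=v_2$ everywhere else, and similarly $\operatorname{Res}_{v_1=v_2}s_{\hat K}$ using $\operatorname{Res}_{v_1=v_2}(v_2-v_1)^{-1}=-1$, one checks that all surviving factors coincide: the ratio factors indexed by $(1,j)$ and $(i,2)$ in $s_K$ match those indexed by $(2,j)$ and $(i,1)$ in $s_{\hat K}$ after $v_1=v_2$, both singular numerators become $v_2(1-q)$, and the polynomial factors reduce to the common $\prod_k(v_2-qu_k)\prod_k(v_2-u_k)$. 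Hence $\operatorname{Res}_{v_1=v_2}(s_K+s_{\hat K})=0$, and summing over pairs gives $\operatorname{Res}_{v_1=v_2}P_{n,m}^{trig(z)}(\overline{u}|\overline{v})=0$. Since $P_{n,m}^{trig(z)}(\overline{u}|\overline{v})$ is a rational function whose only possible poles lie on the diagonals and all of them have vanishing residue, it has no poles and is therefore a polynomial.

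Finally, for the degree bound I would estimate the degree of each summand $s_K$ in one variable $v_{i_0}$. If $i_0\in K$, the ratio product contributes $\prod_{j\not\in K}\frac{v_{i_0}-qv_j}{v_{i_0}-v_j}$ of degree $0$ while $\prod_k(v_{i_0}-u_k)$ has degree $n$; if $i_0\not\in K$, the ratio product again has degree $0$ and $\prod_k(v_{i_0}-qu_k)$ has degree $n$. Either way $s_K$ has degree $n$ as a rational function of $v_{i_0}$, so the polynomial $P_{n,m}^{trig(z)}(\overline{u}|\overline{v})=\sum_K s_K$ grows at most like $v_{i_0}^{\,n}$ as $v_{i_0}\to\infty$ and therefore has degree at most $n$ in each $v_i$. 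I expect the residue cancellation of the second paragraph to be the only real obstacle: one must track precisely which factors survive the specialization $v_1=v_2$ and match the sign discrepancy between the two singular denominators, but once the pairing $K\leftrightarrow\hat K$ is set up the cancellation is exact.
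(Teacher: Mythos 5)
Your proposal is correct, but it proves the lemma by a genuinely different route from the paper. The paper's proof is short because it leans on machinery already established: it invokes the scalar-product-type determinant representations \eqref{trigKostovFWtype}, \eqref{trigKostovFWtypetwo}, which yield the forms \eqref{polydetformPtrig}, \eqref{polydetformQtrig}; polynomiality of $P_{n,m}^{trig(z)}$ then follows because the potential pole at $v_i=v_j$ from $\prod_{1\le i<j\le m}(v_j-v_i)^{-1}$ is apparent (two rows of the determinant coincide), the degree bound follows by counting degree at most $m+n-1$ in the determinant minus $m-1$ from the Vandermonde, and symmetry is immediate from the antisymmetry of both factors. You instead work directly with the sum \eqref{polytrigKajiharaone}: symmetry by reindexing $K\mapsto\sigma(K)$ (valid, since the prefactors depend only on $|K|$), polynomiality by the residue-pairing $K\leftrightarrow\hat K=(K\setminus\{1\})\cup\{2\}$ at the simple pole $v_1=v_2$, and the degree bound by estimating growth of each summand at $v_{i_0}\to\infty$. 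I checked the key cancellation: both singular numerators do evaluate to $v_2(1-q)$, the residues of $(v_1-v_2)^{-1}$ and $(v_2-v_1)^{-1}$ carry opposite signs, and all remaining ratio and polynomial factors of $s_K$ and $s_{\hat K}$ coincide at $v_1=v_2$, so the pairing is exact; and your growth estimate legitimately upgrades to a degree bound once polynomiality is known. The trade-off: your argument is self-contained and does not require the determinant representations (which themselves rest on the difference-operator rewriting and the Cauchy--Vandermonde factorization \eqref{identitytogetanotherdet}), whereas the paper's argument is a three-line corollary of results it has already proved and also explains structurally why the Vandermonde denominator is harmless. One cosmetic point: for $Q_{n,m}^{trig(z)}$ each summand has degree exactly $n$ in each $v_k$, but only the bound ``at most $n$'' survives summation over $K$, which is all the lemma claims.
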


\begin{proof}
From \eqref{trigKostovFWtype} and \eqref{trigKostovFWtypetwo},
we have the following determinant forms
of $P_{n,m}^{trig(z)}(\overline{u}|\overline{v})$ and $Q_{n,m}^{trig(z)}(\overline{u}|\overline{v})$
\begin{align}
P_{n,m}^{trig(z)}(\overline{u}|\overline{v})
=&
\frac{1}{\prod_{1 \le i < j \le m} (v_j-v_i)}
\det_{1 \le i,j \le m}
\Bigg(v_i^{j-1}
\prod_{k=1}^n   (v_i-qu_k)
-z
q^{m-j} v_i^{j-1}
\prod_{k=1}^n   (v_i-u_k)
\Bigg), \label{polydetformPtrig}
 \\
Q_{n,m}^{trig(z)}(\overline{u}|\overline{v})
=&
{\color{black}
\frac{(z;q)_{\infty}}{(q^{m-n}z;q)_{\infty}}
}
\frac{1}{
\prod_{1 \le i < j \le n} (u_j-u_i)} \nn \\
&\times
\det_{1 \le i,j \le n}
\Bigg(u_i^{j-1} \prod_{k=1}^m  (v_k-q u_i)
-z
q^{m-n+j-1} u_i^{j-1}
\prod_{k=1}^m  (v_k-u_i)
\Bigg), \label{polydetformQtrig}
\end{align}

As for $P_{n,m}^{trig(z)}(\overline{u}|\overline{v})$,
one can see
from \eqref{polydetformPtrig} that $v_i=v_j$ coming from $\prod_{1 \le i < j \le m} (v_j-v_i)^{-1}$
is an apparent pole since two rows of the determinant coincide in that case.
As a polynomial of $v_i$, one can see the contribution to the degree coming from the 
determinant is at most $m+n-1$, and $\prod_{1 \le i < j \le m} (v_j-v_i)^{-1}$
subtracts the degree by $m-1$, hence we find the degree is at most $n$.
Symmetry property is also easy to check.
It is easier to check the properties for $Q_{n,m}^{trig(z)}(\overline{u}|\overline{v})$
from \eqref{polydetformQtrig}.
\end{proof}

Due to Lemma \ref{trigdegreesymmetrylemma},
one notes that it is enough to check $P_{n,m}^{trig(z)}(\overline{u}|\overline{v})=Q_{n,m}^{trig(z)}(\overline{u}|\overline{v})$
for $(n+1)^m$ distinct points in $(v_1,\dots,v_m)$.
One can check the equality for the following $2n \times (2n-1) \times \cdots \times (2n-m+1) (\ge (n+1)^m)$
distinct points:
$v_1=q^{\epsilon_1} u_{p_1}, \ v_2=q^{\epsilon_2} u_{p_2}, \ \dots, v_m=q^{\epsilon_m} u_{p_m}$,
$p_1,\dots,p_m \in \{1,\dots,n \}$, $\epsilon_1,\dots,\epsilon_m \in \{ 0,1 \}$
and $v_i \neq v_j$ for $i \neq j$.
These specializations can be evaluated explicitly.
It is easy to check the following cases which correspond to $p_i=p_j$ for some $i \neq j$.

\begin{lemma} \label{trigvanishinglemma}
If $v_i=u_k$, $v_j=qu_k$ for some $i,j,k \ (i \neq j)$,
we have 
\begin{align}
P_{n,m}^{trig(z)}(\overline{u}|\overline{v})=Q_{n,m}^{trig(z)}(\overline{u}|\overline{v})=0.
\end{align}
\end{lemma}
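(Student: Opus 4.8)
The plan is to argue directly from the explicit summations in \eqref{polytrigKajiharaone}, showing that every individual summand vanishes after the specialization. At the point $v_i=u_k$, $v_j=qu_k$ (with $u_k\neq qu_k$ and all remaining variables generic) none of the differences $v_a-v_b$ or $u_a-u_b$ appearing in the denominators of a single summand degenerate, so $P_{n,m}^{trig(z)}(\overline u|\overline v)$ and $Q_{n,m}^{trig(z)}(\overline u|\overline v)$ may be evaluated term by term. Hence it suffices to show that each term of each sum becomes zero; I will track, for each $K$, which factor supplies the zero. Throughout I write the summation dummies with primes ($i',j'$ for the indices ranging over $K$ and its complement, $\ell$ for the remaining product index) to keep them distinct from the fixed indices $i,j,k$ of the lemma.

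For $P_{n,m}^{trig(z)}$, whose sum runs over $K\subset[1,\dots,m]$, I would first note that substituting $v_i=u_k$ annihilates every term with $i\in K$: then $i$ is one of the $i'$ and the factor $\prod_{i'\in K,\,1\le\ell\le n}(v_{i'}-u_\ell)$ contains $(v_i-u_k)=0$. Symmetrically, substituting $v_j=qu_k$ annihilates every term with $j\notin K$, through $(v_j-qu_k)=0$ in $\prod_{j'\notin K,\,1\le\ell\le n}(v_{j'}-qu_\ell)$. Thus only subsets with $i\notin K$ and $j\in K$ can survive, and for any such $K$ the pair $(i',j')=(j,i)$ contributes the factor $\frac{v_j-qv_i}{v_j-v_i}$ to $\prod_{i'\in K,\,j'\notin K}\frac{v_{i'}-qv_{j'}}{v_{i'}-v_{j'}}$, whose numerator equals $qu_k-qu_k=0$ while its denominator $qu_k-u_k=(q-1)u_k$ stays nonzero. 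So all surviving summands vanish and $P_{n,m}^{trig(z)}=0$.

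For $Q_{n,m}^{trig(z)}$, whose sum runs over $K\subset[1,\dots,n]$, the argument is even shorter and uses only the two product factors $\prod_{i'\in K,\,1\le\ell\le m}(v_\ell-u_{i'})$ and $\prod_{j'\notin K,\,1\le\ell\le m}(v_\ell-qu_{j'})$. Substituting $v_i=u_k$ gives, from the $\ell=i$ piece of the first product, $\prod_{i'\in K}(u_k-u_{i'})$, which vanishes exactly when $k\in K$; substituting $v_j=qu_k$ gives, from the $\ell=j$ piece of the second, $\prod_{j'\notin K}(qu_k-qu_{j'})=q^{\,n-|K|}\prod_{j'\notin K}(u_k-u_{j'})$, which vanishes exactly when $k\notin K$. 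Since no $K$ can have both $k\in K$ and $k\notin K$, each term is killed by one of the two substitutions, whence $Q_{n,m}^{trig(z)}=0$.

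The computation is routine once this bookkeeping is arranged; the only point requiring a little care is on the $P$-side, where the surviving terms are annihilated not by one of the linear factors $(v_\bullet-u_\bullet)$ but by the cross factor $\frac{v_j-qv_i}{v_j-v_i}$, so one must check that its numerator vanishes while its denominator does not. As an alternative I could mirror the elliptic reduction of Lemma \ref{ellipticvanishinglemma}: specialize $v_j=qu_k$ first to express $P_{n,m}^{trig(z)}$ and $Q_{n,m}^{trig(z)}$ as lower source functions times a factor $\prod_{\ell\neq j}(v_\ell-u_k)$, and then read off the vanishing from the further specialization $v_i=u_k$. The term-by-term route above, however, is the most direct and avoids setting up the reduction relation.
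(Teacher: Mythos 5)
Your proof is correct, and it takes a genuinely different route from the paper. The paper mirrors its elliptic argument (Lemma \ref{ellipticvanishinglemma}): it substitutes $v_m=qu_n$ alone, derives the explicit reduction relations \eqref{subtrigone}--\eqref{subtrigtwo} expressing $P_{n,m}^{trig(z)}$ and $Q_{n,m}^{trig(z)}$ as $P_{n-1,m-1}^{trig(z)}$, $Q_{n-1,m-1}^{trig(z)}$ times a prefactor containing $\prod_{j=1}^{m-1}q(v_j-u_n)$, and only then reads off the vanishing from the second substitution $v_j=u_n$, handling the remaining index configurations by symmetry. You instead impose both specializations $v_i=u_k$, $v_j=qu_k$ simultaneously and annihilate every summand individually: on the $P$-side the terms with $i\in K$ or $j\notin K$ die through the linear factors $(v_i-u_k)$ and $(v_j-qu_k)$, and the surviving configuration $i\notin K$, $j\in K$ dies through the cross factor $\frac{v_j-qv_i}{v_j-v_i}$ with numerator $q u_k - q u_k = 0$ and nonvanishing denominator $(q-1)u_k$ (your explicit caveat $u_k\neq qu_k$, equivalently $u_k\neq 0$ since $q\neq 1$, is exactly the needed general-position hypothesis, as $u_k=0$ would force $v_i=v_j$); on the $Q$-side the dichotomy $k\in K$ versus $k\notin K$ kills each term via $\prod_{i'\in K}(u_k-u_{i'})$ or $\prod_{j'\notin K}(qu_k-qu_{j'})$. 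Your bookkeeping is sound: the only denominators in play are $v$-differences ($P$-side) and $u$-differences ($Q$-side), none of which degenerate at the specialization, so term-by-term evaluation is legitimate, and the argument is uniform in $i,j,k$ so no symmetry appeal is needed. What the paper's route buys is the reduction formulas themselves, which exhibit the recursive structure parallel to the elliptic case and quantify the overall factor; what your route buys is brevity and self-containedness for the vanishing statement alone, and it applies directly at the fully specialized points $v_a=q^{\epsilon_a}u_{p_a}$ used later in the interpolation argument, since distinctness of the $v_a$ keeps all denominators nonzero there as well.
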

\begin{proof}
We check the case $k=n$. From the expressions \eqref{polytrigKajiharaone},
one can check the following relations hold
when substituting $v_m=q u_n$ in $P_{n,m}^{trig(z)}(\overline{u}|\overline{v})$
and $Q_{n,m}^{trig(z)}(\overline{u}|\overline{v})$
\begin{align}
&P_{n,m}^{trig(z)}(\overline{u}|\overline{v})|_{v_m=q u_n} \nn \\
=&(1-q) u_n z \prod_{j=1}^{n-1} (q u_n-u_j) \prod_{j=1}^{m-1} q(v_j-u_n)
P_{n-1,m-1}^{trig(z)}(u_1,\dots,u_{n-1}|v_1,\dots,v_{m-1}), \label{subtrigone} \\
&Q_{n,m}^{trig(z)}(\overline{u}|\overline{v})|_{v_m=q u_n} \nn \\
=&(1-q) u_n z \prod_{j=1}^{n-1} (q u_n-u_j) \prod_{j=1}^{m-1} q(v_j-u_n)
Q_{n-1,m-1}^{trig(z)}(u_1,\dots,u_{n-1}|v_1,\dots,v_{m-1}). \label{subtrigtwo}
\end{align}
From the factor $\prod_{j=1}^{m-1} q(v_j-u_n)$ in
\eqref{subtrigone} and \eqref{subtrigtwo},
one notes that $P_{n,m}^{trig(z)}(\overline{u}|\overline{v})|_{v_m=q u_n}$
and $Q_{n,m}^{trig(z)}(\overline{u}|\overline{v})|_{v_m=q u_n}$ vanish after
further substituting $v_j=u_n$ for some $j \ (1 \le j \le m-1)$ gives 0.
The other cases can be checked in the same way or follow by symmetry.
\end{proof}

We next assume $p_i \neq p_j$ for $i \neq j$.
For $I=\{ 1 \le i_1 < i_2 < \cdots < i_{|I|} \le n \}$ and $J=\{1 \le j_1 < j_2 < \cdots < j_{|J|} \le n \}$
such that $I \cup J$ is a set of $m$ distinct integers in $[ 1,\dots,n ]$,
let us denote by $\overline{v}=\{ \overline{u}_I, q \overline{u}_J \}$ the 
following type of substitution
\begin{align}
v_{q_1}=u_{i_1}, \dots, v_{q_{|I|}}=u_{i_{|I|}}, v_{r_1}=q u_{j_1}, \dots, v_{r_{|J|}}=q u_{j_{|J|}},
\end{align}
for some $\{ q_1, \dots, q_{|I|}, r_1,\dots, r_{|J|} \}=[1,\dots,m ]$.
Note $|I|+|J|=m$.

We have the following explicit evaluations.
\begin{proposition} \label{trigPQspecializations}
We have
\begin{align}
P_{n,m}^{trig(z)}(\overline{u}|\{ \overline{u}_I, q \overline{u}_J \})
=(-z)^{|J|} q^{|I||J|+|J|(|J|-1)/2} \prod_{ \substack{ i \in I \\ j \in J  }  } (u_j-u_i)
\prod_{ \substack{ i \in I \\ 1 \le j \le n  }  } (u_i-q u_j)
 \prod_{ \substack{ j \in J \\ k \in [1,\dots,n] \backslash I  }  } (q u_j-u_k), \\
Q_{n,m}^{trig(z)}(\overline{u}|\{ \overline{u}_I, q \overline{u}_J \})
=(-z)^{|J|} q^{|I||J|+|J|(|J|-1)/2} \prod_{ \substack{ i \in I \\ j \in J  }  } (u_j-u_i)
\prod_{ \substack{ i \in I \\ 1 \le j \le n  }  } (u_i-q u_j)
 \prod_{ \substack{ j \in J \\ k \in [1,\dots,n] \backslash I  }  } (q u_j-u_k).
\end{align}
\end{proposition}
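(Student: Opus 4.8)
The plan is to substitute $\overline{v} = \{\overline{u}_I, q\overline{u}_J\}$ directly into the defining sums in \eqref{polytrigKajiharaone} and, exactly as in the elliptic Proposition~\ref{ellipticPQspecializations}, to determine which subsets $K$ contribute by tracking which product factors are forced to vanish under the substitution. The essential asymmetry is that the left sum runs over $K \subset [1,\dots,m]$ whereas the right sum runs over $K \subset [1,\dots,n]$; this makes the two evaluations structurally different even though they must yield the same product.

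For $P_{n,m}^{trig(z)}$ I would note that a $J$-type substitution $v_r = qu_j$ with $r \notin K$ kills a factor of $\prod_{j \notin K,\,1\le k\le n}(v_j - qu_k)$, while an $I$-type substitution $v_q = u_i$ with $q \in K$ kills a factor of $\prod_{i \in K,\,1\le k\le n}(v_i - u_k)$. Hence every $J$-type index is forced into $K$ and every $I$-type index is forced out of $K$, and since these exhaust $[1,\dots,m]$ the single subset $K = \{r_1,\dots,r_{|J|}\}$ survives. Substituting this $K$, cancelling the $k \in I$ factors of $\prod_{i \in K, k}(v_i - u_k)$ against the denominator $\prod_{i \in K, j \notin K}(v_i - v_j)$, and collecting the powers of $q$ from the numerators $v_i - qv_j$, then yields the stated product after a routine simplification that mirrors the elliptic case.

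For $Q_{n,m}^{trig(z)}$ the same analysis only forces $J \subseteq K$ and $K \cap I = \emptyset$, so $K = J \cup S$ with $S$ ranging over all subsets of $T := [1,\dots,n]\backslash(I\cup J)$, a set of cardinality $n-m$; many terms now survive when $m<n$. Here I would pull out of the sum all factors independent of $S$, which reconstruct exactly the claimed product (and in particular assemble the spectator variables $\{u_k : k \in T\}$ into the factors $\prod_{i\in I,\,j\in T}(u_i - qu_j)$ and $\prod_{j\in J,\,k\in T}(qu_j - u_k)$ of the answer). The residual terminating sum over $S \subseteq T$ should collapse, by the $q$-binomial theorem \eqref{qbinomial}, to $\prod_{j=1}^{n-m}(1 - q^{-j}z)$, which by \eqref{ratiocases} is precisely the reciprocal of the prefactor $\frac{(z;q)_\infty}{(q^{m-n}z;q)_\infty}$; the two cancel and the product remains.

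The main obstacle is this residual summation in the $Q$-case: one must check that the $S$-dependent parts of $\prod_{i\in K, j\notin K}\frac{qu_i-u_j}{u_i-u_j}$ together with the two product factors conspire so that every dependence on the spectators $u_k$ ($k\in T$) factors out, leaving a pure $q$-binomial sum. I would first make the cancellation transparent in the smallest nontrivial case $n=2$, $m=1$, where the two surviving terms combine to $u_1(1-q)(u_1 - qu_2)(1 - q^{-1}z)$ and the factor $(1 - q^{-1}z)$ visibly cancels the prefactor $\frac{1}{1-q^{-1}z}$, and then propagate the same pattern to general $T$ by induction on $|T|$, peeling off one spectator index at a time in the spirit of the reduction formulas \eqref{subtrigone}--\eqref{subtrigtwo}.
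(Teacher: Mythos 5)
Your proposal reproduces the paper's argument in all structural respects: for $P_{n,m}^{trig(z)}$ the substitution forces every $J$-type index into $K$ and every $I$-type index out of it, leaving the single summand $K=\{r_1,\dots,r_{|J|}\}$, and for $Q_{n,m}^{trig(z)}$ it forces $K\cap I=\emptyset$ and $J\subseteq K$, so that $K=J\sqcup S$ with $S\subseteq T=[1,\dots,n]\backslash(I\cup J)$ --- exactly the restriction to $J\subseteq K\subseteq[1,\dots,n]\backslash I$ in \eqref{trigbeforesimplyfyingq}; the residual sum then collapses to $\prod_{j=1}^{n-m}(1-q^{-j}z)$, cancelling the prefactor by \eqref{ratiocases}. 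So the skeleton is correct and matches the paper.

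The gap is in how you evaluate the residual sum, which you rightly flag as the main obstacle but do not actually resolve. First, ``every dependence on the spectators factors out'' is false term by term: each summand still carries the cross-ratios $\prod_{i\in K\backslash J,\, j\in[1,\dots,n]\backslash\{I,K\}}\frac{u_i-q^{-1}u_j}{u_i-u_j}$, and only the sum over all $S$ of \emph{fixed cardinality} is spectator-independent. That statement is precisely the identity \eqref{oneqidentity}, equating this sum to a $q^{-1}$-binomial coefficient; the paper proves it by a Liouville argument (the sum is bounded, with only apparent poles at $u_i=u_j$, hence constant in the $u$'s), the ordered limits $u_1\to\infty,\dots,u_n\to\infty$, and an induction via the Pascal-type recursion, after which the $q$-binomial theorem \eqref{qbinomial} yields $\prod_{j=1}^{n-m}(1-q^{-j}z)$. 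Second, your proposed induction on $|T|$ ``peeling off one spectator at a time'' does not close as stated: splitting the sum over $S\subseteq T$ according to whether a distinguished $t_0$ lies in $S$ leaves cross factors $\frac{u_i-q^{-1}u_{t_0}}{u_i-u_{t_0}}$ resp.\ $\frac{u_{t_0}-q^{-1}u_j}{u_{t_0}-u_j}$ in each piece, so neither piece is an instance of the inductive hypothesis for $T\backslash\{t_0\}$; sending $u_{t_0}\to\infty$ computes the limit, but you may only identify the sum with its limit after proving constancy in $u_{t_0}$ --- which is again the Liouville step, at which point your induction becomes the paper's proof of \eqref{oneqidentity}. (The reduction formulas \eqref{subtrigone}--\eqref{subtrigtwo} do not help here: they specialize a $v$-variable and lower $m$ and $n$ simultaneously, whereas in the residual sum all $v$'s are already eliminated.) Your $n=2$, $m=1$ check is structurally right, though for $J=\{1\}$ the surviving combination is $-z\,u_1(q-1)(qu_1-u_2)(1-q^{-1}z)$ rather than the expression you wrote. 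Once \eqref{oneqidentity}, or an equivalent constancy argument, is supplied, the rest of your proposal is the paper's proof.
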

\begin{proof}
After the substitution
$
v_{q_1}=u_{i_1}, \dots, v_{q_{|I|}}=u_{i_{|I|}}, v_{r_1}=qu_{j_1}, \dots, v_{r_{|J|}}=qu_{j_{|J|}}$,
one notes from the factors
$\displaystyle \prod_{\substack{ j \not\in K  \\ 1 \le k \le n }} (v_j-q u_k)$
and
$\displaystyle \prod_{\substack{ i \in K  \\ 1 \le k \le n }} (v_i-u_k)$
that only the summand corresponding to $K=\{ r_1,\dots,r_{|J|} \}$  survives
and we get
\begin{align}
&P_{n,m}^{trig(z)}(\overline{u}|\{ \overline{u}_I, q \overline{u}_J \}) \nn \\
=&(-z)^{|J|} q^{|J|(|J|-1)/2} \prod_{\substack{i \in J \\ j \in  I}} \frac{q(u_i-u_j)}{q u_i-u_j}
\prod_{\substack{ j \in I  \\ 1 \le k \le n }} (u_j-q u_k)
\prod_{\substack{ j \in J  \\ 1 \le k \le n }} (q u_j-u_k) \nn \\
=&(-z)^{|J|} q^{|I||J|+|J|(|J|-1)/2} \prod_{ \substack{ i \in I \\ j \in J  }  } (u_j-u_i)
\prod_{ \substack{ i \in I \\ 1 \le j \le n  }  } (u_i-q u_j)
 \prod_{ \substack{ j \in J \\ k \in [1,\dots,n] \backslash I  }  } (q u_j-u_k).
\end{align}

Next, we check the case for 
$Q_{n,m}^{trig(z)}(\overline{u}|\{ \overline{u}_I, q \overline{u}_J \})$.
Substituting
$
v_{q_1}=u_{i_1}, \dots, v_{q_{|I|}}=u_{i_{|I|}}, v_{r_1}=q u_{j_1}, \dots, v_{r_{|J|}}=q u_{j_{|J|}}$,
we note from the factor
$\displaystyle \prod_{\substack{ j \in K  \\ 1 \le k \le m }} (v_k-u_j)$
that summands labeled by $K$ satisfying $K \cap I \neq \phi$ vanish,
and from the factor
$\displaystyle \prod_{\substack{ j \not\in K  \\ 1 \le k \le m }} (v_k-q u_j)$
we note the summands vanish unless $J \subset K$.
Hence the sum can be restricted to
$\displaystyle \sum_{J \subset K \subset [1,\dots,n] \backslash I}$
and after simplifications, we find 
$Q_{n,m}^{trig(z)}(\overline{u}|\{ \overline{u}_I, q \overline{u}_J \})$ can be rewritten as
\begin{align}
&Q_{n,m}^{trig(z)}(\overline{u}|\{ \overline{u}_I, q \overline{u}_J \}) \nn \\
=&q^{|I||J|+|J|(|J|-1)/2}
{\color{black}
\frac{(z;q)_{\infty}}{(q^{m-n}z;q)_{\infty}}
}
\prod_{ \substack{ i \in I \\ j \in J  }  } (u_j-u_i)
\prod_{ \substack{ i \in I \\ 1 \le j \le n  }  } (u_i- q u_j)
 \prod_{ \substack{ j \in J \\ k \in [1,\dots,n] \backslash I  }  } (q u_j-u_k)
(-z)^{|J|} \nn \\
&\times \sum_{J \subset K \subset [1,\dots,n] \backslash I} 
(-z)^{|K|-|J|} q^{-(|K|-|J|)(|K|-|J|+1)/2}
\prod_{\substack{i \in K \backslash J \\ j \in [1,\dots,n] \backslash \{I,K \} }}
\frac{u_i-q^{-1} u_j}{u_i-u_j}. \label{trigbeforesimplyfyingq}
\end{align}
Finally, using Cauchy's $q$-binomial identity \eqref{qbinomial}
we note the following factorization
\begin{align}
&\sum_{J \subset K \subset [1,\dots,n] \backslash I} 
(-z)^{|K|-|J|} q^{-(|K|-|J|)(|K|-|J|+1)/2}
\prod_{\substack{i \in K \backslash J \\ j \in [1,\dots,n] \backslash \{I,K \} }}
\frac{u_i-q^{-1} u_j}{u_i-u_j} \nn \\
=&\sum_{\ell=|J|}^{n-|I|}
\sum_{\substack{ K: |K|=\ell   \\ J \subset K \subset [1,\dots,n] \backslash I }}
(-z)^{\ell-|J|} q^{-(\ell-|J|)(\ell-|J|+1)/2}
\prod_{\substack{i \in K \backslash J \\ j \in [1,\dots,n] \backslash \{I,K \} }}
\frac{u_i-q^{-1} u_j}{u_i-u_j} \nn \\
=&\sum_{\ell=|J|}^{n-|I|}
(-z)^{\ell-|J|} q^{-(\ell-|J|)(\ell-|J|+1)/2}
\begin{bmatrix}
   n-|I|-|J|  \\
   \ell- |J |
\end{bmatrix}_{q^{-1}}
=\prod_{j=1}^{n-|I|-|J|} (1-q^{-j} z) =\prod_{j=1}^{n-m} (1-q^{-j} z)
. \label{trigbeforesimplyfyingqtwo}
\end{align}
Note that we also used the following $q$-identity
\begin{align}
\sum_{K \subset [1,\dots,n]: |K|=\ell }
\prod_{\substack{i \in K \\ j \in [1,\dots,n] \backslash K  }}
\frac{u_i-q^{-1} u_j}{u_i-u_j}=\begin{bmatrix}
   n  \\
   \ell
\end{bmatrix}_{q^{-1}},
\label{oneqidentity}
\end{align}
in the second equality of \eqref{trigbeforesimplyfyingqtwo}.
\eqref{oneqidentity} can be checked by noting that the left hand side is a bounded function and
the possible poles $u_i=u_j \ (i \neq j)$ are apparent singularities, hence using Liouville's theorem
and taking the limit $u_1 \to \infty, u_2 \to \infty,\dots,u_n \to \infty$ in this order,
we get
\begin{align}
\sum_{K \subset [1,\dots,n]: |K|=\ell }
\prod_{\substack{i \in K \\ j \in [1,\dots,n] \backslash K  }}
\frac{u_i-q^{-1} u_j}{u_i-u_j}
=
\sum_{K \subset [1,\dots,n]: |K|=\ell} q^{- \# \{(i,j) \ | \ i \in K, \ j \in [1,\dots,n] \backslash K, \ i>j  \} }.
\end{align}
Hence it remains to show
\begin{align}
\sum_{K \subset [1,\dots,n]: |K|=\ell} q^{- \# \{(i,j) \ | \ i \in K, \ j \in [1,\dots,n] \backslash K, \ i>j  \} }
=\displaystyle \begin{bmatrix}
   n  \\
   \ell
\end{bmatrix}_{q^{-1}},
\label{qbinomialidentity}
\end{align}
which can be proved by induction on $n$.
Denote the left hand side and right hand side of
\eqref{qbinomialidentity}
as $f(n,\ell)$ and $g(n,\ell)$ respectively.
It is easy to show they satisfy the recursive relations
\begin{align}
f(n,\ell)&=f(n-1,\ell)+f(n-1,\ell-1)q^{\ell-n}, \\
g(n,\ell)&=f(n-1,\ell)+f(n-1,\ell-1)q^{\ell-n},
\end{align}
and $f(n,0)=g(n,0)=1$ and $f(1,1)=g(1,1)=1$ which correspond to the initial conditions.

Combining
\eqref{trigbeforesimplyfyingq} and \eqref{trigbeforesimplyfyingqtwo} gives
\begin{align}
&Q_{n,m}^{trig(z)}(\overline{u}|\{ \overline{u}_I, \overline{u}_J+c \}) \nn \\
=&(-z)^{|J|} q^{|I||J|+|J|(|J|-1)/2} \prod_{ \substack{ i \in I \\ j \in J  }  } (u_j-u_i)
\prod_{ \substack{ i \in I \\ 1 \le j \le n  }  } (u_i-q u_j)
 \prod_{ \substack{ j \in J \\ k \in [1,\dots,n] \backslash I  }  } (q u_j-u_k).
\end{align}

\end{proof}

By showing
Lemma \ref{trigvanishinglemma} and Proposition \ref{trigPQspecializations},
we checked \eqref{polytrigKajiharaone} for enough specializations of $(v_1,\dots,v_m)$ and
hence the identity \eqref{polytrigKajiharaone} itself holds.

{\color{black}
At the end of this subsection, we briefly explain the case $m>n$.
The proof can be done in the same way with the case $m \leq n$,
and we check \eqref{polytrigKajiharaone} for enough specializations of $(u_1,\dots,u_n)$.
We write down lemmas and proposition corresponding to
Lemma \ref{trigdegreesymmetrylemma},
Lemma \ref{trigvanishinglemma} and Proposition \ref{trigPQspecializations}.
\begin{lemma} 
$P_{n,m}^{trig(z)}(\overline{u}|\overline{v})$ and $Q_{n,m}^{trig(z)}(\overline{u}|\overline{v})$
are symmetric polynomials in $u_1, \dots, u_n$.
The degree of $u_i \ (i=1,2,\dots,n)$ is at most  $m$
for both polynomials.
\end{lemma}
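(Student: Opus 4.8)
The plan is to mirror the proof of Lemma~\ref{trigdegreesymmetrylemma} essentially verbatim, but with the roles of the two scalar product type determinants \eqref{polydetformPtrig} and \eqref{polydetformQtrig} interchanged, since we now analyze the dependence on $u_1,\dots,u_n$ rather than on $v_1,\dots,v_m$. Both representations \eqref{polydetformPtrig} and \eqref{polydetformQtrig} hold without any restriction on $m$ and $n$, as they descend from \eqref{trigKostovFWtype} and \eqref{trigKostovFWtypetwo}. Moreover, for $m>n$ the prefactor $(z;q)_{\infty}/(q^{m-n}z;q)_{\infty}=\prod_{j=0}^{m-n-1}(1-q^j z)$ is a nonzero constant independent of $\overline{u}$ and $\overline{v}$, so it affects neither polynomiality nor the degree in each $u_i$, and may be ignored throughout.

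For $P_{n,m}^{trig(z)}(\overline{u}|\overline{v})$ I would read everything off \eqref{polydetformPtrig}, which in this regime is the easy direction. There the only denominator is $\prod_{1\le i<j\le m}(v_j-v_i)$, which is free of $\overline{u}$, so $P_{n,m}^{trig(z)}$ is manifestly polynomial in $\overline{u}$. Each $(i,j)$-entry contains the products $\prod_{k=1}^n(v_i-qu_k)$ and $\prod_{k=1}^n(v_i-u_k)$, both symmetric in $u_1,\dots,u_n$ and of degree exactly one in every $u_\ell$; hence each entry is symmetric in $\overline{u}$ and linear in each $u_\ell$. A matrix all of whose entries are symmetric in $\overline{u}$ has a symmetric determinant, so $P_{n,m}^{trig(z)}$ is symmetric in $\overline{u}$, and since the size-$m$ determinant is a sum of products of $m$ entries, each linear in $u_\ell$, its degree in any $u_\ell$ is at most $m$.

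For $Q_{n,m}^{trig(z)}(\overline{u}|\overline{v})$ the argument is the genuinely nontrivial one and parallels the treatment of $P$ in Lemma~\ref{trigdegreesymmetrylemma}. Here I use \eqref{polydetformQtrig}, whose denominator $\prod_{1\le i<j\le n}(u_j-u_i)$ does involve $\overline{u}$: one checks that $u_a=u_b$ is only an apparent singularity, since rows $a$ and $b$ of the matrix coincide when $u_a=u_b$, so the determinant vanishes there and cancels the corresponding Vandermonde factor, giving polynomiality in $\overline{u}$. Symmetry then follows because swapping $u_a\leftrightarrow u_b$ multiplies both the determinant (a row swap) and $\prod_{1\le i<j\le n}(u_j-u_i)$ by $-1$, leaving the ratio invariant. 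For the degree, the $(i,j)$-entry $u_i^{j-1}\prod_{k=1}^m(v_k-qu_i)-zq^{m-n+j-1}u_i^{j-1}\prod_{k=1}^m(v_k-u_i)$ has degree $(j-1)+m\le (n-1)+m$ in $u_i$, and $u_i$ occurs only in row $i$, so the determinant has degree at most $(n-1)+m$ in each $u_i$; subtracting the degree $n-1$ supplied by the Vandermonde denominator leaves degree at most $m$, as claimed.

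The step I expect to require the most care is the degree bookkeeping for $Q_{n,m}^{trig(z)}$: one must verify simultaneously that the apparent poles $u_i=u_j$ genuinely cancel and that the leading contribution of the determinant does not exceed $(n-1)+m$ before the Vandermonde is removed, all while remembering that the constant prefactor contributes nothing. By contrast, the case of $P_{n,m}^{trig(z)}$ is immediate from \eqref{polydetformPtrig}. The vanishing and specialization lemmas analogous to Lemma~\ref{trigvanishinglemma} and Proposition~\ref{trigPQspecializations} can then be established by the same substitution arguments with the roles of $\overline{u}$ and $\overline{v}$ exchanged, completing the verification of \eqref{polytrigKajiharaone} on enough specializations of $(u_1,\dots,u_n)$.
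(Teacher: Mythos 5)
Your proposal is correct and matches the paper's intended argument: for the case $m>n$ the paper simply states the proof goes ``in the same way'' as Lemma \ref{trigdegreesymmetrylemma}, meaning exactly what you do --- read polynomiality, symmetry and the degree bound in $\overline{u}$ off the determinant forms \eqref{polydetformPtrig} and \eqref{polydetformQtrig}, with the easy/hard roles of the two representations interchanged (the $u$-Vandermonde pole in \eqref{polydetformQtrig} being apparent since row $i$ depends only on $u_i$, and the prefactor $(z;q)_{m-n}$ being a finite constant in $\overline{u}$). Your degree bookkeeping, $(n-1)+m$ from the determinant minus $n-1$ from the Vandermonde, is precisely the computation the paper performs for the $v$-variables in the $m\le n$ case.
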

\begin{lemma} 
If $u_i=v_k$, $u_j=q^{-1}v_k$ for some $i,j,k \ (i \neq j)$,
we have 
\begin{align}
P_{n,m}^{trig(z)}(\overline{u}|\overline{v})=Q_{n,m}^{trig(z)}(\overline{u}|\overline{v})=0.
\end{align}
\end{lemma}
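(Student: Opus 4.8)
The plan is to imitate the proof of Lemma~\ref{trigvanishinglemma}, reusing the recursions \eqref{subtrigone} and \eqref{subtrigtwo} but extracting a \emph{different} vanishing factor from them. First I would record that both $P_{n,m}^{trig(z)}(\overline{u}|\overline{v})$ and $Q_{n,m}^{trig(z)}(\overline{u}|\overline{v})$ are symmetric in $u_1,\dots,u_n$ and, separately, in $v_1,\dots,v_m$: the symmetry in the $u$'s is the degree--symmetry lemma stated just above, while the symmetry in the $v$'s is visible directly from \eqref{polytrigKajiharaone}, since every $v$-dependent product there runs over all of $\{1,\dots,n\}$ and the factor $\prod_{i\in K,\,j\notin K}(v_i-qv_j)/(v_i-v_j)$ symmetrizes the sum over $K\subset[1,\dots,m]$ (the same mechanism as in Lemma~\ref{trigdegreesymmetrylemma}). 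Using these symmetries I may relabel the indices of the hypothesis so that $k=m$, $j=n$ and $i=n-1$; the two conditions $u_i=v_k$ and $u_j=q^{-1}v_k$ then read $v_m=qu_n$ together with $u_{n-1}=v_m=qu_n$.

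Next I would substitute $v_m=qu_n$. The derivation of \eqref{subtrigone} and \eqref{subtrigtwo} is a purely algebraic manipulation of \eqref{polytrigKajiharaone}: after this specialization every summand with $m\notin K$ is annihilated by the factor $(v_m-qu_n)$, so only the subsets $K\ni m$ contribute and one obtains
\begin{align*}
P_{n,m}^{trig(z)}(\overline{u}|\overline{v})|_{v_m=qu_n}
=(1-q)u_n z\prod_{j=1}^{n-1}(qu_n-u_j)\prod_{j=1}^{m-1}q(v_j-u_n)\,
P_{n-1,m-1}^{trig(z)}(u_1,\dots,u_{n-1}|v_1,\dots,v_{m-1}),
\end{align*}
and likewise for $Q$. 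This computation never uses $m\le n$, so the recursions persist in the present regime $m>n$; for $Q$ one has only to note that the prefactor $(z;q)_{\infty}/(q^{m-n}z;q)_{\infty}$ is invariant under $(n,m)\mapsto(n-1,m-1)$ and is therefore carried along unchanged.

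Finally, whereas Lemma~\ref{trigvanishinglemma} used the factor $\prod_{j=1}^{m-1}q(v_j-u_n)$ and then set some $v_j=u_n$, here I would instead invoke the other explicit factor $\prod_{j=1}^{n-1}(qu_n-u_j)$, whose $j=n-1$ term $(qu_n-u_{n-1})$ vanishes precisely under the remaining condition $u_{n-1}=qu_n$. Since $P_{n-1,m-1}^{trig(z)}$ and $Q_{n-1,m-1}^{trig(z)}$ are polynomials, hence finite, in the surviving variables, the whole product is zero, which gives $P_{n,m}^{trig(z)}(\overline{u}|\overline{v})=Q_{n,m}^{trig(z)}(\overline{u}|\overline{v})=0$; the remaining cases follow by the symmetries established in the first step. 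The one point demanding care---and the main obstacle---is confirming that \eqref{subtrigone} and \eqref{subtrigtwo}, derived in the $m\le n$ discussion, survive verbatim when $m>n$; once the prefactor bookkeeping above is checked, the vanishing mechanism is identical.
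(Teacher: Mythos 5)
Your proof is correct and is precisely the adaptation the paper intends: for the $m>n$ case the paper only remarks that the proof ``can be done in the same way'' as Lemma~\ref{trigvanishinglemma}, and your argument carries this out by noting that the hypothesis $u_j=q^{-1}v_k$ is the same specialization $v_m=qu_n$ (after relabeling by the $u$- and $v$-symmetries), that the recursions \eqref{subtrigone} and \eqref{subtrigtwo} are derived without ever using $m\le n$ (with the prefactor $(z;q)_{m-n}$ unchanged under $(n,m)\mapsto(n-1,m-1)$), and that the vanishing now comes from the factor $\prod_{j=1}^{n-1}(qu_n-u_j)$ at $u_{n-1}=qu_n$ rather than from $\prod_{j=1}^{m-1}q(v_j-u_n)$. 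No gaps; this matches the paper's approach.
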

For $I=\{ 1 \le i_1 < i_2 < \cdots < i_{|I|} \le m \}$ and $J=\{1 \le j_1 < j_2 < \cdots < j_{|J|} \le m \}$
such that $I \cup J$ is a set of $n$ distinct integers in $[ 1,\dots,m ]$,
let us denote by $\overline{u}=\{ \overline{v}_I, q^{-1} \overline{v}_J \}$ the 
following type of substitution
\begin{align}
u_{q_1}=v_{i_1}, \dots, u_{q_{|I|}}=v_{i_{|I|}}, u_{r_1}=q^{-1} v_{j_1}, \dots, u_{r_{|J|}}=q^{-1} v_{j_{|J|}},
\end{align}
for some $\{ q_1, \dots, q_{|I|}, r_1,\dots, r_{|J|} \}=[1,\dots,n ]$.
Note $|I|+|J|=n$.
\begin{proposition}
We have
\begin{align}
P_{n,m}^{trig(z)}(\{ \overline{v}_I, q^{-1} \overline{v}_J \}|\overline{v})
=&(-z)^{|J|} q^{-|J|(|J|+1)/2} \frac{(z;q)_{\infty}}{(q^{m-n}z;q)_{\infty}} \nn \\
&\times \prod_{ \substack{ i \in I \\ j \in J  }  } (v_i-v_j)
\prod_{ \substack{ i \in I \\ 1 \le j \le m  }  } (v_j-q v_i)
 \prod_{ \substack{ j \in J \\ k \in [1,\dots,m] \backslash I  }  } (q v_k-v_j), \\
Q_{n,m}^{trig(z)}(\{ \overline{v}_I, q^{-1} \overline{v}_J \}|\overline{v})
=&(-z)^{|J|} q^{-|J|(|J|+1)/2} \frac{(z;q)_{\infty}}{(q^{m-n}z;q)_{\infty}} \nn \\
&\times \prod_{ \substack{ i \in I \\ j \in J  }  } (v_i-v_j)
\prod_{ \substack{ i \in I \\ 1 \le j \le m  }  } (v_j-q v_i)
 \prod_{ \substack{ j \in J \\ k \in [1,\dots,m] \backslash I  }  } (q v_k-v_j).
\end{align}
\end{proposition}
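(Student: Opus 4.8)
The plan is to follow the proof of Proposition~\ref{trigPQspecializations} almost verbatim, but with the two sides interchanged. Because we are now specializing the $u$-variables rather than the $v$-variables, it is the sum $Q_{n,m}^{trig(z)}$ over $K\subset[1,\dots,n]$ that collapses to a single term, while the sum $P_{n,m}^{trig(z)}$ over $K\subset[1,\dots,m]$ becomes the case requiring a $q$-binomial summation.

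First I would evaluate $Q_{n,m}^{trig(z)}(\{\overline{v}_I,q^{-1}\overline{v}_J\}|\overline{v})$. Substituting $u_{q_\ell}=v_{i_\ell}$ and $u_{r_\ell}=q^{-1}v_{j_\ell}$ into \eqref{polytrigKajiharaone}, I inspect the factors $\prod_{i\in K,\,1\le k\le m}(v_k-u_i)$ and $\prod_{j\notin K,\,1\le k\le m}(v_k-qu_j)$: the former forces $q_\ell\notin K$ for all $\ell$ (otherwise a factor $v_{i_\ell}-v_{i_\ell}$ appears), and the latter forces $r_\ell\in K$ for all $\ell$ (otherwise a factor $v_{j_\ell}-q\,q^{-1}v_{j_\ell}$ appears). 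Since $\{q_1,\dots,q_{|I|}\}$ and $\{r_1,\dots,r_{|J|}\}$ partition $[1,\dots,n]$, only the single term $K=\{r_1,\dots,r_{|J|}\}$ survives. Collecting the powers of $q$ coming from $(-q^{m-n}z)^{|J|}$, from $q^{|J|(|J|-1)/2}$, from $\prod_{i\in I,\,j\in J}\frac{qu_i-u_j}{u_i-u_j}$, and from extracting $q^{-1}$ out of each of the $m|J|$ factors of $\prod_{i\in K,\,1\le k\le m}(v_k-u_i)$, I would check (using $|I|+|J|=n$) that the total exponent is $-|J|(|J|+1)/2$ and that the remaining polynomial factors rearrange into the claimed product; this is the direct counterpart of the $P$-computation in Proposition~\ref{trigPQspecializations}.

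Next I would treat the harder sum $P_{n,m}^{trig(z)}(\{\overline{v}_I,q^{-1}\overline{v}_J\}|\overline{v})$. Now the factors $\prod_{i\in K,\,1\le k\le n}(v_i-u_k)$ and $\prod_{j\notin K,\,1\le k\le n}(v_j-qu_k)$ force $K\cap I=\emptyset$ and $J\subset K$, so the summation is restricted to $J\subset K\subset[1,\dots,m]\setminus I$. Writing $K=J\sqcup K'$ with $K'\subset L:=[1,\dots,m]\setminus(I\cup J)$ and $|L|=m-n$, I would factor off the $K=J$ contribution, which reproduces precisely the polynomial prefactor and the power $q^{-|J|(|J|+1)/2}$ already obtained for $Q$. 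The residual sum over $K'$ is of the form $\sum_{K'\subset L}(-z)^{|K'|}q^{|K'|(|K'|-1)/2}\prod_{i\in K',\,j\in L\setminus K'}\frac{v_i-qv_j}{v_i-v_j}$, which is exactly the pattern met in \eqref{trigbeforesimplyfyingq}. Summing first over subsets of fixed cardinality by means of \eqref{oneqidentity} with $q$ replaced by $q^{-1}$, and then over the cardinality by the $q$-binomial theorem \eqref{qbinomial}, this residual sum collapses to $\prod_{j=0}^{m-n-1}(1-q^jz)=\frac{(z;q)_{\infty}}{(q^{m-n}z;q)_{\infty}}$, which is exactly the prefactor appearing in the asserted formula.

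The main obstacle will be the bookkeeping on the $P$-side: identifying precisely which summands survive the partial specialization, cleanly separating the fixed base term from the free summation over $K'\subset L$, and carrying the powers of $q$ through so that the residual $q$-binomial sum yields $\frac{(z;q)_{\infty}}{(q^{m-n}z;q)_{\infty}}$ and not a shifted power of $z$. As in \eqref{trigbeforesimplyfyingqtwo}, this last step rests only on \eqref{oneqidentity} and \eqref{qbinomial}, so no new ingredient is required; the entire difficulty is in the consistent tracking of exponents and signs. The two evaluations then coincide, reconfirming $P_{n,m}^{trig(z)}=Q_{n,m}^{trig(z)}$ at these specializations, which together with the degree and symmetry bound in the $u$-variables suffices to establish \eqref{polytrigKajiharaone} for $m>n$.
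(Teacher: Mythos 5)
Your proposal is correct and is essentially the paper's own argument: the paper proves the $m>n$ case ``in the same way'' as Proposition~\ref{trigPQspecializations}, which is exactly the mirrored computation you describe, with the roles of $P$ and $Q$ interchanged so that the $Q$-sum collapses to the single term $K=\{r_1,\dots,r_{|J|}\}$ while the restricted $P$-sum over $J\subset K\subset[1,\dots,m]\setminus I$ factors into the base term times a residual sum yielding $\prod_{j=0}^{m-n-1}(1-q^jz)=(z;q)_{m-n}$ via \eqref{oneqidentity} and \eqref{qbinomial}. The key factorization you flag as the main obstacle does go through cleanly — the cross factors between $K'$ and $J$ contribute exactly $q^{-|J||K'|}$, cancelling the $q^{|J||K'|}$ from expanding $q^{|K|(|K|-1)/2}$ — so the residual sum depends only on $K'\subset L$ as claimed.
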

}

\subsection{Geometric derivation of trigonometric source identity}
\label{subsec:geom}
We give a geometric derivation of the trigonometric source identity.
We assume $n \le m$ and set $t:=q^{-1}$ and redefine $t^{-1} v_i$ $(i=1,\dots,m)$ and $t^{m-1}z$ as $v_i$ and $z$.
Then \eqref{trigonometricKajihara} is rewritten as

\begin{align}
&\sum_{K \subset [1,\dots,m]} (-z)^{|K|} t^{|K|(|K|-1)/2} \prod_{\substack{i \in K \\ j \not\in K}} \frac{1-t v_i/v_j}{1-v_i/v_j}
\prod_{\substack{ i \in K  \\ 1 \le k \le n }} \frac{1-t u_k/v_i}{1-u_k/v_i}
\nonumber \\
=&\prod_{j=1}^{m-n} (1-t^{m-j} z) \sum_{K \subset [1,\dots,n]} (-z)^{|K|} t^{|K|(|K|-1)/2}
\prod_{\substack{i \in K \\ j \not\in K}} \frac{1- t u_j/u_i}{1-u_j/u_i}
\prod_{\substack{ i \in K  \\ 1 \le k \le m }} \frac{1-t u_i/v_k}{1-u_i/v_k}.
\label{geometrictrigonometricKajihara}
\end{align}
We show the version \eqref{geometrictrigonometricKajihara} of the trigonometric source identity.
Applying the $q$-binomial identity \eqref{qbinomial},
we have
\begin{align}
\prod_{j=1}^{m-n} (1-t^{m-j} z)=\sum_{k=0}^{m-n} t^{nk} t^{k(k-1)/2} (-z)^k 
\displaystyle \begin{bmatrix}
   m-n  \\
   k
\end{bmatrix}_{t}.
\label{qbinomialforgeometry}
\end{align}
Using
\eqref{qbinomialforgeometry}
and expanding in terms of $-z$ and taking the 
coefficients of $(-z)^{\ell}$ of both hand sides of 
\eqref{geometrictrigonometricKajihara}, we have
\begin{align}
&t^{\ell(\ell -1)/2} \sum_{\substack{K \subset 
[1,\dots, m] \\ \lvert K \rvert=\ell }} 
\prod_{\substack{i \in K \\ j \in [1,\dots, m] \setminus 
K}} 
\frac{1-  t v_i/v_j }{1-v_{i}/v_{j} }
\prod_{\substack{ i \in K  \\ 1 \le k \le n }} 
\frac{1- tu_{k}/v_{i}}{ 1 - u_{k}/v_{i}} \nn \\
=&\sum_{k=0}^{\ell} t^{nk} t^{k(k-1)/2}
\displaystyle \begin{bmatrix}
   m-n  \\
   k
\end{bmatrix}_{t}
\times  t^{(\ell -k)(\ell -k-1)/2}
\sum_{\substack{K \subset [1,\dots,n] \\ \lvert K \rvert
=\ell -k}} 
\prod_{\substack{i \in K \\ j \in [1, \ldots, n] 
\setminus  K}} 
\frac{ 1 - tu_{j}/u_{i} }{1- u_{j}/u_{i} }
\prod_{\substack{ i \in K  \\ 1 \le k^\prime \le m }} 
\frac{1 - t u_{i}/v_{k^\prime} }{ 1 - u_{i}/v_{k^\prime} }.
\label{coeffgeomsource}
\end{align}
We give a geometric derivation of \eqref{coeffgeomsource}.
For the description,
it is useful to introduce another version of
$q$-integers, $q$-factorials and $q$-binomials
$\displaystyle (n)_q:={q^{n/2}-q^{-n/2} \over 
q^{1/2}-q^{-1/2}}$, $(n)_q!:=\prod_{j=1}^n (j)_q$
{\color{black}
where $n$ is a nonnegative integer.}
Note the following relations $(n)_q=q^{(1-n)/2}[n]_q$, 
$(n)_q!= q^{(n-n^2)/4}[n]_q!$.

We apply the wall-crossing formula 
\cite[Theorem 3.6]{OS}
to the following framed quiver
$Q \colon$
\begin{center}
\includegraphics[scale=1]{source}
\end{center}
We consider $Q$-representations on $V=\C^{\ell}$ with the
framing $W_{1}=\mathbb{C}^{n}
\to V$ and 
the co-framing $V \to W_{2}=\mathbb{C}^{m}$.
Such $Q$-representations are parametrized by
\[
\mathbb{M}=\Hom(W_{1}, \C^{\ell}) \oplus \Hom(\C^{\ell}, 
W_{2}).
\]
The general linear group $\GL(\C^{\ell})$ acts on 
$\mathbb{M}$ by $g \cdot (z, w)=(gz, wg^{-1})$ for 
$g \in \GL(\C^{\ell})$ and $(z, w) \in \mathbb{M}$, and 
$\GL(\C^{\ell})$-orbits are regarded as isomorphism classes
of $Q$-representations.

We consider the character $\GL(\C^{\ell}) \to 
\C^{\ast}, g \mapsto \text{det}(g)^{\pm 1}$
{\color{black} which corresponds to $\pm 1$-stability 
condition
lying the chamber with the 
bottom (resp. top) boundary wall $0$}.
These define semistable loci 
\begin{align*}
\mathbb{M}^{+1} &= \lbrace (z, w)
\in \mathbb{M} \mid w \text{ is injective} \rbrace \\
\mathbb{M}^{-1} &= \lbrace (z, w)
\in \mathbb{M} \mid z \text{ is surjective} \rbrace. 
\end{align*}
{\color{black}
Replacing $n$ and $m$, we have a bijection 
$\mathbb{M}^{+1} \to \mathbb{M}^{-1}$
by sending $(z, w)$ to the dual $({}^t w, {}^t z)$.} 

The framed quiver moduli $M^{\pm}_{Q} (\ell)$
of $\pm 1$-stable
$Q$-representations on $\C^{\ell}$ are given by
$M^{\pm }_{Q} (\ell)=\mathbb{M}^{\pm 1} / 
\text{GL}(\C^{\ell})$.
We consider the diagonal torus 
$\mathbb{T}=(\C^{\ast} )^{n+m}$
of $\text{GL}(W_{1}) \times \text{GL}(W_{2})$, and 
$\mathbb{T}$-action on $M^{\pm}_{Q}(\ell)$ induced by the
natural $\text{GL}(W_{1}) \times \text{GL}(W_{2})$-action
on $\mathbb{M}$ by the conjugation.
Concretely, we have $(u, v) \cdot (z,w)=(zu^{-1}, vw)$
for $(u,v) \in \mathbb{T}$ and $(z,w) \in \mathbb{M}$ 
where
$u \in (\C^{\ast})^{n}$ and $v \in (\C^{\ast})^{m}$.

For a vector bundle $\mathcal{E}$, we set
$\wedge_{-t} \mathcal{E}=
\sum_{i} {-t}^{i} \wedge^{i} \mathcal{E}$, and
define the $K$-theoretic Euler class 
\[
\eukt(\mathcal{E})= 
\ch( \wedge_{-t} \mathcal{E}).
\]
We take $\mathbb{T}$-equivariant $K$-theoretic integrals 
\[
\intk_{M^{\pm}_{Q}(\ell)} 
\eukt(TM^{\pm}_{Q}(\ell))
=
\int_{M^{\pm}_{Q}(\ell)} 
\eukt(TM^{\pm}_{Q}(\ell)) \cdot
\td_{M^{\pm}_{Q}(\ell)}
\]
where $\td_{M^{\pm}_{Q}(\ell)}=
{\eu(TM^{\pm}_{Q}(\ell)) 
\over \euk(TM^{\pm}_{Q}(\ell))}$ is the Todd class
and $\eu(TM^{\pm}_{Q}(\ell))$ is the usual Euler class. 
This equivariant integral computes the equivariant 
$\chi_{t}$-genus of the moduli space.
We see that $\intk_{M^{+}_{Q}(\ell)} 
\eukt(TM^{+}_{Q}(\ell))$ is equal to
\begin{align*}
&\sum_{\substack{K \subset [1,\dots, m] \\ \lvert K 
\rvert=\ell }} 
\prod_{\substack{i \in K \\ j \in [1,\dots, m] 
\setminus K}} 
\frac{1-  t v_i/v_j }{1-v_{i}/v_{j} }
\prod_{\substack{ i \in K  \\ 1 \le k \le n }} 
\frac{1- tu_{k}/v_{i}}{ 1 - u_{k}/v_{i}},
\end{align*}
and $\intk_{M^{-}_{Q}(\ell)} 
\eukt(M^{-}_{Q}(\ell))$ is 
equal to
\begin{align*}
& \sum_{\substack{K \subset [1,\dots,n] \\ \lvert K \rvert=\ell }} 
\prod_{\substack{i \in K \\ j \in [1, \ldots, n] \setminus  K}} 
\frac{ 1 - tu_{j}/u_{i} }{1- u_{j}/u_{i} }
\prod_{\substack{ i \in K  \\ 1 \le k \le m }} 
\frac{1 - t u_{i}/v_{k} }{ 1 - u_{i}/v_{k} }.
\end{align*}
By \cite[Theorem 3.6]{OS}, we have
\begin{align*}
&
\intk_{M^{+}_{Q}(\ell)} 
\eukt(M^{+}_{Q}(\ell)) 
- \intk_{M^{-}_{Q}(\ell)} 
\eukt(M^{-}_{Q}(\ell)) \\
&=
\sum_{k=1}^{\ell} 
\sum_{{\mbi{\mk I} \in \Dec(\ell) \atop |\mbi{\mk I}|=k}} 
{ [\ell -k]_{t}!  \over  [\ell]_{t} ! }
\prod_{i=1}^{j} {[ d_{i} -1]_{t}! \over t-1} 
\gamma^{K}_{d_{i}} (t) 
t^{-(\ell -i)d_{i}}( t^{s^K( \mk I_{i}, \mbi{\mk I}_{> i}) +  md_{i}} - 
t^{s^K( \mbi{\mk I}_{> i}, \mk I_{i}) + nd_{i} } ) \nn \\
&\times
\intk_{ M^{-}_{Q}(\ell  -k)} 
\eukt(M^{-}_{Q}(\ell  -k)).
\end{align*}
where $\Dec(\ell)$ is the set of collections 
$\mbi{\mk I} = ( \mk I_{1}, \ldots, \mk I_{j} )$ 
satisfying $\mk I_{1} \sqcup \cdots \sqcup \mk I_{j} 
\subset [1,\dots, \ell]$ and
\[
\min(\mk I_{1}) > \cdots > \min(\mk I_{j}).
\]
Here we set 
$d_{i} = |\mk I_{i}|$, and 
$|\mbi{\mk I}|=d_{1} + \cdots + d_{j}$, 
\[
s^K(\mk I_{1}, \mk I_{2}) = \left| \lbrace (i, j) 
\in \mk I_{1} \times \mk I_{2} \mid i < j \rbrace 
\right|.
\]
We have
\begin{align}
\label{gamma}
\gamma^{K}_{d}(t) = \intk_{M_{Q^{\sharp}}(d)} \eukt(M_{Q^{\sharp}}(d))  = 
\begin{cases} 1 & d=1\\ 0 & d \neq 1 \end{cases}
\end{align}
for the following framed quiver $Q^{\sharp} \colon$
\begin{center}
\includegraphics[scale=1]{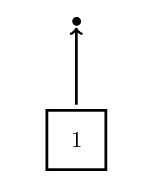}
\end{center}
{
\color{black}
Here we do not use the equivariant parameter
induced from the framing of $Q^{\sharp}$.
}

Hence we have
\begin{align*}
&
\intk_{M^{+}_{Q}(\ell)} 
\eukt(M^{+}_{Q}(\ell)) 
- \intk_{M^{-}_{Q}(\ell)} 
\eukt(M^{-}_{Q}(\ell)) 
\\
&=
\sum_{k=1}^{\ell} 
\sum_{\substack{\mbi{\mk I} \in \Dec(\ell) \\ |\mbi{\mk I}|=k \\ |\mk I_{1}|=\cdots = |\mk I_{k}|=1}} 
\frac{ [\ell -k]_{t}!  }{ [\ell]_{t} ! }
\prod_{i=1}^{k} \frac{t^{ s^K( \mk I_{i},  \mbi{\mk I}_{> i}) + m} - 
t^{ s^K( \mbi{\mk I}_{> i}, \mk I_{i}) + n} }{t-1} \cdot t^{-\ell +i} 
\intk_{ M^{-}_{Q}(\ell  -k)} \eukt(M^{-}_{Q}(\ell  -k)).
\end{align*}
Since $\sum_{i=1}^{k} i-\ell  = \frac{ k(k+1)-2kl}{2}=
\frac{(\ell -k)(\ell -k-1) -\ell  (\ell -1)}{2}$, 
this is equivalent to
\begin{align}
&
t^{\ell (\ell -1)/2} \intk_{M^{+}_{Q}(\ell)} 
\eukt(M^{+}_{Q}(\ell)) 
- t^{\ell (\ell -1)/2} \intk_{M^{-}_{Q}(\ell)} 
\eukt(M^{-}_{Q}(\ell)) 
\nn \\
&=
\sum_{k=1}^{\ell} 
\sum_{\mbi{\mk I} \in \Dec(\ell) \atop 
{|\mbi{\mk I}|=k \atop 
|\mk I_{1}|=\cdots = |\mk I_{k}|=1} }
{ [\ell -k]_{t}! \over [\ell]_{t} ! }
\prod_{i=1}^{k} 
\frac{t^{ s^K( \mk I_{i}, \mbi{\mk I}_{> i}) + m} - 
t^{  s^K( \mbi{\mk I}_{> i}, \mk I_{i}) + n} }{t-1} \nn \\
&\times
t^{\frac{(\ell -k)(\ell -k-1)}{2}}
\intk_{ M^{-}_{Q}(\ell  -k)} 
\eukt(M^{-}_{Q}(\ell  -k)). 
\label{equivalentcoeffgeomsource}
\end{align}
We now transform
\eqref{equivalentcoeffgeomsource} to
\eqref{coeffgeomsource}.
To this end, we investigate the sum in the right hand side of 
\eqref{equivalentcoeffgeomsource}.
Using $ s^K( \mbi{\mk I}_{> i}, \mk I_{i})+s^K( \mk I_{i}, \mbi{\mk I}_{> i})=\ell -i$,
we note the factors in \eqref{equivalentcoeffgeomsource} can be rewritten as
\begin{align}
&\frac{ [\ell -k]_{t}!  }{ [\ell]_{t} ! }
\prod_{i=1}^{k} 
\frac{t^{ s^K( \mk I_{i}, \mbi{\mk I}_{> i}) + m} - 
t^{  s^K( \mbi{\mk I}_{> i}, \mk I_{i}) + n} }{t-1}
\nn \\
=&t^{(\ell -k)^2/4-(\ell -k)/4-\ell ^2/4+l/4} \frac{ (\ell -k)_{t}!  }{ (\ell)_{t} ! } \nn \\
&\times \prod_{i=1}^{k} 
t^{(m+n-1+s^K( \mbi{\mk I}_{> i}, \mk I_{i})+s^K( \mk I_{i}, \mbi{\mk I}_{> i}))/2}
(m-n-s^K( \mbi{\mk I}_{> i}, \mk I_{i})+s^K( \mk I_{i}, \mbi{\mk I}_{> i}))_t
\nn \\
=&t^{(k^2+k)/4-\ell k/2} { (\ell -k)_{t}! \over (\ell)_{t} ! } 
\prod_{i=1}^{k} 
t^{(m+n-1+\ell-i)/2}
(m-n-s^K( \mbi{\mk I}_{> i}, \mk I_{i})+s^K( \mk I_{i}, \mbi{\mk I}_{> i}))_t \nn \\
=& t^{(m+n-1)k/2} \frac{ (\ell -k)_{t}!  }{ (\ell)_{t} ! }
\prod_{i=1}^{k} 
(m-n-s^K( \mbi{\mk I}_{> i}, \mk I_{i})+s^K( \mk I_{i}, \mbi{\mk I}_{> i}))_t.
\end{align}
Hence the right hand side of 
\eqref{equivalentcoeffgeomsource} can be rewritten as
\begin{align}
&\sum_{k=1}^{\ell} 
t^{(m+n-1)k/2} {(\ell -k)_{t}! \over (\ell)_{t} ! }
\sum_{\mbi{\mk I} \in \Dec(\ell) \atop
{|\mbi{\mk I}|=k \atop |\mk I_{1}|=\cdots = 
|\mk I_{k}|=1}} 
\prod_{i=1}^{k} 
(m-n-s^K( \mbi{\mk I}_{> i}, \mk I_{i})+s^K( \mk I_{i}, \mbi{\mk I}_{> i}))_t
\nn \\
&\times
t^{\frac{(\ell -k)(\ell -k-1)}{2}}
\intk_{ M^{-}_{Q}(\ell  -k)} \eukt(M^{-}_{Q}(\ell  -k)).
\label{equivalentcoeffgeomsourcerewrite}
\end{align}
Writing the subsets as $\mk I_{i}=\{ h_i \}$ $(\ell  \ge h_1 > \cdots > h_k \ge 1)$,
we have
\begin{align}
s^K( \mbi{\mk I}_{> i}, \mk I_{i})&=s^K([\ell] \backslash \{h_1,\dots,h_i \},\{ h_i \})=h_i-1, \\
s^K( \mk I_{i}, \mbi{\mk I}_{> i})&=s^K(\{ h_i \},[\ell] \backslash \{h_1,\dots,h_i \})=\ell -h_i-i+1,
\end{align}
and hence the second sum in
\eqref{equivalentcoeffgeomsourcerewrite}
can be simplified using 
\eqref{qidentityforgeometricderivation} 
in the the proposition below as
\begin{align}
&\sum_{\substack{\mbi{\mk I} \in \Dec(\ell) \\ |\mbi{\mk I}|=k \\ |\mk I_{1}|=\cdots = |\mk I_{k}|=1}} 
\prod_{i=1}^{k} 
(m-n-s^K( \mbi{\mk I}_{> i}, \mk I_{i})+s^K( \mk I_{i}, \mbi{\mk I}_{> i}))_t \nn 
\\
=&
\sum_{\ell \ge h_1 > \cdots > h_k \ge 1} \prod_{i=1}^k (\ell -2h_i-i+2+m-n)_t
=\frac{(m-n)_t! (\ell)_t!}{(k)_t!(m-n-k)_t!(\ell -k)_t!}. 
\label{appliedqidentityforgeometricderivation}
\end{align}
Therefore,
\eqref{equivalentcoeffgeomsourcerewrite}
can be simplified as
\begin{align}
&\sum_{k=1}^{\ell} 
t^{(m+n-1)k/2} {(m-n)_t! \over (k)_t!(m-n-k)_t!}
\times
t^{{(\ell -k)(\ell -k-1) \over 2}}
\intk_{ M^{-}_{Q}(\ell  -k)} 
\eukt(M^{-}_{Q}(\ell  -k)) \nn \\
=&\sum_{k=1}^{\ell} 
t^{nk} t^{k(k-1)/2} 
\displaystyle \begin{bmatrix}
   m-n  \\
   k
\end{bmatrix}_{t}
\times
t^{\frac{(\ell -k)(\ell -k-1)}{2}}
\intk_{ M^{-}_{Q}(\ell  -k)} \eukt(M^{-}_{Q}(\ell  -k)). \label{equivalentcoeffgeomsourcerewritetwo}
\end{align}
The right hand side of
\eqref{equivalentcoeffgeomsource} is simplified as
\eqref{equivalentcoeffgeomsourcerewritetwo},
and moving the second term in the left hand side to the right hand side,
\eqref{equivalentcoeffgeomsource} can be rewritten as
\begin{align}
&t^{\ell (\ell -1)/2} 
\intk_{M^{+}_{Q}(\ell)} 
\eukt(M^{+}_{Q}(\ell)) \nn \\
=&\sum_{k=0}^{\ell} 
t^{nk} t^{k(k-1)/2} 
\displaystyle \begin{bmatrix}
   m-n  \\
   k
\end{bmatrix}_{t}
\times
t^{\frac{(\ell -k)(\ell -k-1)}{2}}
\intk_{ M^{-}_{Q}(\ell  -k)} \eukt(M^{-}_{Q}(\ell  -k)).
\end{align}
This is nothing but the identity
\eqref{coeffgeomsource}.

Finally we show the following proposition used in 
\eqref{appliedqidentityforgeometricderivation}.
\begin{proposition}
We have
\begin{align}
\sum_{\ell \ge h_1 > \cdots > h_k \ge 1} \prod_{i=1}^k (\ell -2h_i-i+2+m-n)_t
=\frac{(m-n)_t! (\ell)_t!}{(k)_t!(m-n-k)_t!(\ell -k)_t!}. 
\label{qidentityforgeometricderivation}
\end{align}
\end{proposition}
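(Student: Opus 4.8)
The plan is to prove \eqref{qidentityforgeometricderivation} by induction on $\ell$, after embedding it in a one–parameter family. For an integer $a$ write $(a)_t=\dfrac{t^{a/2}-t^{-a/2}}{t^{1/2}-t^{-1/2}}$, so that $(-a)_t=-(a)_t$ and each $(a)_t$ is a Laurent polynomial in $t^{1/2}$. For an integer $N$ set
\[
S_k(\ell,N):=\sum_{\ell\ge h_1>\cdots>h_k\ge 1}\ \prod_{i=1}^k(\ell+N+2-i-2h_i)_t,\qquad
R_k(\ell,N):=\frac{\prod_{j=0}^{k-1}(N-j)_t\ \prod_{j=0}^{k-1}(\ell-j)_t}{(k)_t!},
\]
both regarded as Laurent polynomials in $t^{1/2}$ for every integer $N$. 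When $N,\ell\ge k$ one has $R_k(\ell,N)=\dfrac{(N)_t!\,(\ell)_t!}{(k)_t!\,(N-k)_t!\,(\ell-k)_t!}$, so \eqref{qidentityforgeometricderivation} is exactly $S_k(\ell,m-n)=R_k(\ell,m-n)$. I will prove $S_k(\ell,N)=R_k(\ell,N)$ for all $k\ge 0$ and all integers $N$ simultaneously, inducting on $\ell\ge 0$.

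First I would establish the recursion
\[
S_k(\ell,N)=(N+1-\ell)_t\,S_{k-1}(\ell-1,N)+S_k(\ell-1,N+1)\qquad(\star)
\]
purely by reindexing the subset sum according to whether the largest index $\ell$ occurs among $h_1>\cdots>h_k$. If $h_1=\ell$, the $i=1$ factor becomes $(\ell+N+2-1-2\ell)_t=(N+1-\ell)_t$, and setting $j=i-1$ and $h'_j=h_{j+1}$ turns the remaining factors into $(\ell+N+1-j-2h'_j)_t=((\ell-1)+N+2-j-2h'_j)_t$, i.e.\ the summand of $S_{k-1}(\ell-1,N)$ on the range $\{1,\dots,\ell-1\}$; this yields the first term. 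If $\ell\notin\{h_i\}$, then all $h_i$ lie in $\{1,\dots,\ell-1\}$ and $(\ell+N+2-i-2h_i)_t=((\ell-1)+(N+1)+2-i-2h_i)_t$ is precisely the summand of $S_k(\ell-1,N+1)$. The key observation making the second case work is that the exponent $\ell+N+2-i-2h_i$ is symmetric in $(\ell,N)$, so shrinking the summation range $\ell\mapsto\ell-1$ is exactly compensated by the shift $N\mapsto N+1$. Note that $(\star)$ needs no summation formula, only bookkeeping.

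It then remains to verify that $R_k$ satisfies the same recursion and to match base cases. Clearing $(k)_t!$ and cancelling the common factor $\prod_{j=0}^{k-2}(N-j)_t\,\prod_{j=0}^{k-2}(\ell-1-j)_t$, the identity $R_k(\ell,N)=(N+1-\ell)_t\,R_{k-1}(\ell-1,N)+R_k(\ell-1,N+1)$ collapses to the three–term relation
\[
(\ell)_t\,(N+1-k)_t=(k)_t\,(N+1-\ell)_t+(N+1)_t\,(\ell-k)_t,
\]
which I would prove by inserting $(a)_t(b)_t=\dfrac{(t^{(a+b)/2}+t^{-(a+b)/2})-(t^{(a-b)/2}+t^{-(a-b)/2})}{(t^{1/2}-t^{-1/2})^2}$ into each term: the two contributions proportional to $t^{\pm(N+1-\ell+k)/2}$ cancel, leaving on both sides $\dfrac{(t^{Q/2}+t^{-Q/2})-(t^{P/2}+t^{-P/2})}{(t^{1/2}-t^{-1/2})^2}$ with $Q=\ell+N+1-k$ and $P=\ell+k-N-1$. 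For the base case $\ell=0$, both $S_k(0,N)$ and $R_k(0,N)$ equal $1$ when $k=0$ and $0$ when $k\ge 1$ (the summation range is empty, while $(0)_t=0$ kills $R_k$), and more generally both sides vanish once $\ell<k$; hence $(\star)$ together with the matching recursion for $R_k$ closes the induction on $\ell$, with $k$ and $N$ free at each stage. The only genuinely creative step is discovering $(\star)$ — in particular the coupling that trades the shrinking range for the shift $N\mapsto N+1$; after that the proof reduces to the elementary balanced–integer identity above, and the only point demanding care is the boundary regime $\ell<k$ or $N<k$, where the required zeros appear automatically through factors such as $(0)_t$.
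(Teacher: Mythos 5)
Your proposal is correct and follows essentially the same route as the paper: induction on $\ell$ with the sum split according to whether the top index occurs among the $h_i$ (your recursion $(\star)$ is exactly the paper's decomposition, with the shift $N\mapsto N+1$ that the paper performs implicitly by invoking the induction hypothesis with $m-n$ replaced by $m-n+1$), followed by the same closing algebra. Your three-term relation $(\ell)_t\,(N+1-k)_t=(k)_t\,(N+1-\ell)_t+(N+1)_t\,(\ell-k)_t$ is precisely the paper's identity $(x-u)_t(y-v)_t-(x-v)_t(y-u)_t=(x-y)_t(u-v)_t$ at $x=m-n$, $y=k-1$, $u=\ell$, $v=-1$, so the two verifications coincide.
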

\begin{proof}
We show by induction on $\ell$. 
For $\ell=k$, it is easy to see that both
hand sides of \eqref{qidentityforgeometricderivation}
are $(m-n)_t (m-n-1)_t \cdots (m-n-k+1)_t$.
Next we assume that 
\eqref{qidentityforgeometricderivation} hold for 
$\ell \ge k$ and show 
\eqref{qidentityforgeometricderivation} for $\ell+1$. 
We divide the summation in left hand side of 
\eqref{qidentityforgeometricderivation} as
\[
\sum_{\ell +1 \ge h_1>\cdots>h_k \ge 1} \ =\sum_{\ell +1=h_1>h_2>\cdots>h_k \ge 1} \ +\sum_{\ell \ge h_1>h_2>\cdots>h_k\ge 1} \ ,
\]
and apply the induction hypothesis to each sum.
The first sum becomes
\begin{align}
&\sum_{\ell +1=h_1>h_2>\cdots>h_k \ge 1} \prod_{i=1}^k (\ell -2h_i-i+3+m-n)_t \nn \\
=&(-\ell +m-n)_t
\sum_{\ell \ge h_2>\cdots>h_k \ge 1} \prod_{i=2}^k (\ell -2h_i-i+3+m-n)_t \nn \\
=&(-\ell +m-n)_t
\sum_{\ell \ge h_1>\cdots>h_{k-1} \ge 1} \prod_{i=1}^{k-1} (\ell -2h_i-i+2+m-n)_t \nn \\
=&(-\ell +m-n)_t \frac{(m-n)_t! (\ell)_t!}{(k-1)_t!(m-n-k+1)_t!(\ell -k+1)_t!}. \label{inductionsumone}
\end{align}

The second sum can be rewritten as
\begin{align}
\sum_{\ell \ge h_1>h_2>\cdots>h_k\ge 1} \prod_{i=1}^k (\ell -2h_i-i+3+m-n)_t
=\frac{(m-n+1)_t! (\ell)_t!}{(k)_t!(m-n+1-k)_t!(\ell -k)_t!}. \label{inductionsumtwo}
\end{align}
Combining \eqref{inductionsumone} and
\eqref{inductionsumtwo} and using
\begin{align}
(x-u)_t (y-v)_t
-(x-v)_t (y-u)_t=(x-y)_t (u-v)_t,
\end{align}
for $x=m-n, y=k-1, u=\ell , v=-1$, 
we get
\begin{align}
&\sum_{\ell +1 \ge h_1 > \cdots > h_k \ge 1} \prod_{i=1}^k (\ell -2h_i-i+3+m-n)_t \nn \\
=&(-\ell +m-n)_t \frac{(m-n)_t! (\ell)_t!}{(k-1)_t!(m-n-k+1)_t!(\ell -k+1)_t!}
+\frac{(m-n+1)_t! (\ell)_t!}{(k)_t!(m-n+1-k)_t!(\ell -k)_t!} \nn \\
=&\frac{(m-n)_t! (\ell)_t!}{(k)_t!(m-n-k+1)_t!(\ell -k+1)_t!}
(
(-\ell +m-n)_t (k)_t+
(m-n+1)_t (\ell -k+1)_t
) \nn \\
=&\frac{(m-n)_t! (\ell +1)_t!}{(k)_t!(m-n-k)_t!(\ell -k+1)_t!},
\end{align}
which completes the proof.
\end{proof}
{
\color{black}The elliptic uplift of the 
$\chi_{t}$-genus should be similarly defined in the 
context of the elliptic cohomology.
In the near future, we will try to formulate it.}

\section{Rational version}

In this section, we discuss the rational version of the source identity.
In addition to presenting the results which are rational version
of the ones in previous section for the trigonometric case, we also prove a rational version
of symmetrization formulas by Lascoux.

\subsection{Reduction from trigonometric to rational source identity}
Here we briefly discuss the degeneration from trigonometric to rational source identity.

We start from the $m=n$ case of the trigonometric source identity \eqref{trigonometricKajihara}.
Replacing $zq^{-1/2}$ by $z$, $m=n$ case of
\eqref{trigonometricKajihara} can be rewritten as
\begin{align}
&\sum_{K \subset [1,\dots,n]} (-z)^{|K|}  \prod_{\substack{i \in K \\ j \not\in K}} \frac{q^{-1/2} v_i^{1/2} v_j^{-1/2}-q^{1/2} v_j^{1/2} v_i^{-1/2} }
{v_i^{1/2} v_j^{-1/2}-v_j^{1/2} v_i^{-1/2}}
\prod_{\substack{ i \in K  \\ 1 \le k \le n }} \frac{v_i^{1/2} u_k^{-1/2}-u_k^{1/2} v_i^{-1/2}}{q^{-1/2}v_i^{1/2} u_k^{-1/2}-q^{1/2}u_k^{1/2} v_i^{-1/2}}
\nonumber \\
=& \sum_{K \subset [1,\dots,n]} (-z)^{|K|}  \prod_{\substack{i \in K \\ j \not\in K}} \frac{q^{1/2} u_i^{1/2} u_j^{-1/2}-q^{-1/2} u_j^{1/2} u_i^{-1/2}}{u_i^{1/2} u_j^{-1/2}-u_j^{1/2} u_i^{-1/2}}
\prod_{\substack{ i \in K  \\ 1 \le k \le m }} \frac{v_k^{1/2} u_i^{-1/2}-u_i^{1/2} v_k^{-1/2} }{q^{-1/2} v_k^{1/2} u_i^{-1/2}-q^{1/2} u_i^{1/2} v_k^{-1/2}}.
\label{beforetakingtrigtorat}
\end{align}

Introduce $x_i$, $y_j$, $c$
through $u_i=e^{x_i}$, $v_j=e^{y_j}$, $q^{-1/2}=e^{-c}$.
Then \eqref{beforetakingtrigtorat} is rewritten using trigonometric functions as
\begin{align}
&\sum_{K \subset [1,\dots,n]} (-z)^{|K|}  \prod_{\substack{i \in K \\ j \not\in K}} \frac{\mathrm{sinh}(y_i-y_j-c) }{\mathrm{sinh}(y_i-y_j)}
\prod_{\substack{ i \in K  \\ 1 \le k \le n }} \frac{\mathrm{sinh}(y_i-x_k) }{\mathrm{sinh}(y_i-x_k-c)}
\nonumber \\
=& \sum_{K \subset [1,\dots,n]} (-z)^{|K|}  \prod_{\substack{i \in K \\ j \not\in K}} \frac{\mathrm{sinh}(x_i-x_j+c)}{\mathrm{sinh}(x_i-x_j)}
\prod_{\substack{ i \in K  \\ 1 \le k \le m }} \frac{\mathrm{sinh}(x_i-y_k) }{\mathrm{sinh}(x_i-y_k+c)}.
\end{align}
Taking the rational limit $\sinh w \to w$ $(w \to 0)$ and redefining $x_i$ and $y_j$ as $u_i$ and $v_j$ gives
the $m=n$ case of the rational source identity \eqref{trigonometricKajihara}.
Next, taking the limiting procedure in the same way as described in the proof in
Theorem \ref{limitofellipticsource}, we get \eqref{trigonometricKajihara}
for $n>m$.

\subsection{Determinant representations of rational source functions}

The systematic derivation of determinant forms
of rational functions appearing in the source identity
\eqref{rationalKajihara} can be applied to the rational case as well.
We introduce notations for both hand sides of \eqref{rationalKajihara}
\begin{align}
F_{n,m}^{(z)}(\overline{u}|\overline{v})&:=\sum_{K \subset [1,\dots,m]} (-z)^{|K|} \prod_{\substack{i \in K \\ j \not\in K}} \frac{v_i-v_j-c}{v_i-v_j}
\prod_{\substack{ i \in K  \\ 1 \le k \le n }} \frac{v_i-u_k}{v_i-u_k-c}, \label{rationalsourcefunctionone}  \\
G_{n,m}^{(z)}(\overline{u}|\overline{v})&:=(1-z)^{m-n} \sum_{K \subset [1,\dots,n]} (-z)^{|K|} \prod_{\substack{i \in K \\ j \not\in K}} \frac{u_i-u_j+c}{u_i-u_j}
\prod_{\substack{ i \in K  \\ 1 \le k \le m }} \frac{u_i-v_k}{u_i-v_k+c}. \label{rationalsourcefunctiontwo}
\end{align}
We call these functions as rational source functions.
{\color{black}
We note a special case of the form \eqref{rationalsourcefunctionone} for
$F_{n,m}^{(1)}(\overline{u}|\overline{v})$
appeared in the context of partition functions first in
Gromov-Sever-Vieira \cite[(1.3)]{GSV}. See also \cite[(4.17)]{FodaWheeler}.}
For the case of generic $z$, $F_{n,m}^{(z)}(\overline{u}|\overline{v})$
and $G_{n,m}^{(z)}(\overline{u}|\overline{v})$ appear as representations
of the most generalized domain wall boundary partition functions in Belliard-Pimenta-Slavnov
{\color{black}
 \cite[(92), (93)]{BPS}.}

To derive determinant forms, we rewrite
rational source functions in the following forms
using additive difference operators
\begin{align}
F_{n,m}^{(z)}(\overline{u}|\overline{v})
&=\frac{\prod_{i=1}^m \prod_{k=1}^n (v_i-u_k)}{\prod_{1 \le i < j \le m} (v_j-v_i)}
\prod_{j=1}^m (1-z T_{v_j}^{-1}) \frac{\prod_{1 \le i < j \le m} (v_j-v_i)}{\prod_{i=1}^m \prod_{k=1}^n (v_i-u_k)},
\label{rationalsourcedifferenceone} \\
G_{n,m}^{(z)}(\overline{u}|\overline{v})
&=(1-z)^{m-n} \frac{  \prod_{i=1}^n \prod_{k=1}^m (u_i-v_k)   }{\prod_{1 \le i < j \le n} (u_j-u_i)}
\prod_{j=1}^n (1-z T_{u_j}) \frac{\prod_{1 \le i < j \le n} (u_j-u_i)}{  \prod_{i=1}^n \prod_{k=1}^m (u_i-v_k) }.
\label{rationalsourcedifferencetwo}
\end{align}

In the same way with deriving
Proposition \ref{proptrigMPT} starting from \eqref{rationalsourcedifferenceone} and
\eqref{rationalsourcedifferencetwo} instead, we get the following Minin-Pronko-Tarasov \cite{MPT,PT} type determinant represenations.
\begin{proposition}
We have the following determinant representations:
\begin{align}
F_{n,m}^{(z)}(\overline{u}|\overline{v})=&
\frac{ 1-r \prod_{\ell=1}^m v_\ell }{\det_{1 \le i,j \le m} \Big( \sum_{k=1}^m p_{ik} \psi_k^{A_{m-1}}(v_j;0,r) \Big)}
\nn \\
&\times \det_{1 \le i,j \le m} \Bigg(
\Bigg(
\frac{1}{1-r \prod_{\ell=1}^m v_\ell}
\Bigg)^{\delta_{i1}}
 \sum_{k=1}^m p_{ik} \psi_k^{A_{m-1}}(v_j;0,r) \nn \\
&-z
\Bigg(
\frac{1}{1-r (v_j-c) \prod_{\ell=1, \ell \neq j}^m v_\ell}
\Bigg)^{\delta_{i1}}
 \sum_{k=1}^m p_{ik} \psi_k^{A_{m-1}}(v_j-c;0,r)  
\prod_{\ell=1}^n \frac{v_j-u_\ell }{v_j-u_\ell-c}
\Bigg), \label{MPTone}
\end{align}
\begin{align}
G_{n,m}^{(z)}(\overline{u}|\overline{v}) 
=&(1-z)^{m-n} \frac{  1-r \prod_{\ell=1}^n u_\ell }{ \det_{1 \le i,j \le n} \Big( \sum_{k=1}^n q_{ik}  \psi_k^{A_{n-1}}(u_j;0,r) \Big) }
\nn \\
&\times \det_{1 \le i,j \le n} \Bigg( 
\Bigg(\frac{1}{1-r \prod_{\ell=1}^n u_\ell }
\Bigg)^{\delta_{i1}}
 \sum_{k=1}^n q_{ik} \psi_k^{A_{n-1}}(u_j;0,r) \nn \\
&-z 
\Bigg(\frac{1}{1-r (u_j+c) \prod_{\ell=1, \ell \neq j}^n u_\ell }
\Bigg)^{\delta_{i1}}
\sum_{k=1}^n q_{ik} \psi_k^{A_{n-1}}(u_j+c;0,r)
 \prod_{\ell=1}^m \frac{u_j-v_\ell }{u_j-v_\ell+c}
\Bigg). \label{MPTtwo}
\end{align}
Here, $r$, $p_{ij}$ $(i,j=1,\dots,m)$, $q_{ij}$ $(i,j=1,\dots,n)$ are additional parameters
such that $\det_{1 \le i,j \le m}(p_{ij}) \not\equiv 0$, $\det_{1 \le i,j \le n}(q_{ij}) \not\equiv 0$.
\end{proposition}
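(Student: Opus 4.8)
The plan is to follow verbatim the strategy used in the proof of Proposition~\ref{propellipticMPT} and its trigonometric counterpart Proposition~\ref{proptrigMPT}, with the multiplicative difference operators replaced by the additive operators $T_{v_j}^{-1}$ and $T_{u_j}$ and the elliptic/trigonometric Vandermonde factorization replaced by its rational form \eqref{trigvandermonde}. I will describe the argument for \eqref{MPTone}; the identity \eqref{MPTtwo} is obtained in exactly the same manner starting from \eqref{rationalsourcedifferencetwo}, now using \eqref{trigvandermonde} in the variables $u_1,\dots,u_n$, the operators $T_{u_j}$ (which send $u_j\mapsto u_j+c$), the parameters $q_{ij}$, and carrying along the overall factor $(1-z)^{m-n}$.

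First I would start from the difference-operator expression \eqref{rationalsourcedifferenceone} and apply the rational Vandermonde factorization \eqref{trigvandermonde} in the variables $v_1,\dots,v_m$, replacing each occurrence of $\prod_{1\le i<j\le m}(v_j-v_i)$ by $\det_{1\le j,k\le m}(\psi_k^{A_{m-1}}(v_j;0,r))/(1-r\prod_\ell v_\ell)$. Next I would multiply both the numerator and the denominator of the resulting expression by a generic $\det_{1\le i,k\le m}(p_{ik})\not\equiv 0$ with entries independent of $v_1,\dots,v_m$, so that this factor commutes through the difference operator; by multilinearity of the determinant the products $\det(p_{ik})\det_{jk}(\psi_k^{A_{m-1}}(v_j;0,r))$ then collapse to $\det_{1\le i,j\le m}(\sum_k p_{ik}\psi_k^{A_{m-1}}(v_j;0,r))$. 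At this stage the prefactor reads $(1-r\prod_\ell v_\ell)\prod_{i,k}(v_i-u_k)/\det_{ij}(\sum_k p_{ik}\psi_k^{A_{m-1}}(v_j;0,r))$, and the operator $\prod_{j=1}^m(1-zT_{v_j}^{-1})$ remains to be applied to the complementary rational function.

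The final step is to distribute the scalar factors of that rational function into a single determinant, placing the global factor $(1-r\prod_\ell v_\ell)^{-1}$ into the first row (this is the origin of the exponent $\delta_{i1}$, since scaling the whole determinant is the same as scaling one row) and the column-dependent factor $\prod_\ell(v_j-u_\ell)^{-1}$ into the $j$-th column, and then to insert $\prod_j(1-zT_{v_j}^{-1})$ inside the determinant. Because the $j$-th column depends only on $v_j$, the operator $T_{v_j}^{-1}$ acts solely on that column, sending $v_j\mapsto v_j-c$; this produces the second summand of each entry, with $\psi_k^{A_{m-1}}(v_j;0,r)$ replaced by $\psi_k^{A_{m-1}}(v_j-c;0,r)$ and the normalization $1-r\prod_\ell v_\ell$ replaced by $1-r(v_j-c)\prod_{\ell\neq j}v_\ell$. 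Pulling the common column factor $\prod_\ell(v_j-u_\ell)^{-1}$ back out then cancels the surviving $\prod_{i,k}(v_i-u_k)$ in the prefactor and leaves in the shifted term the ratio $\prod_\ell(v_j-u_\ell)/(v_j-c-u_\ell)=\prod_\ell(v_j-u_\ell)/(v_j-u_\ell-c)$, yielding precisely \eqref{MPTone}.

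The computation is entirely mechanical, so the only point requiring care---and the main potential source of error---is the bookkeeping of how $T_{v_j}^{-1}$ acts on the two places where $v_j$ occurs beyond $\psi_k^{A_{m-1}}$: one must verify that $T_{v_j}^{-1}(1-r\prod_\ell v_\ell)=1-r(v_j-c)\prod_{\ell\neq j}v_\ell$ in the first row and that, after extracting the column factor $\prod_\ell(v_j-u_\ell)^{-1}$, the denominator of the shifted term becomes exactly $\prod_\ell(v_j-u_\ell-c)$. Once these two substitutions are checked, and the analogous bookkeeping for $T_{u_j}$ acting on $1-r\prod_\ell u_\ell$ and on $\prod_\ell(u_j-v_\ell)$ is carried out for \eqref{MPTtwo}, the two determinant representations follow.
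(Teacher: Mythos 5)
Your proposal follows exactly the route the paper itself indicates (the template of Proposition \ref{propellipticMPT}, transported through Proposition \ref{proptrigMPT}): start from \eqref{rationalsourcedifferenceone}--\eqref{rationalsourcedifferencetwo}, substitute the factorization \eqref{trigvandermonde} for the Vandermonde products, absorb $\det(p_{ik})$ by multilinearity into $\det(\sum_k p_{ik}\psi_k^{A_{m-1}}(v_j;0,r))$, push the scalar $(1-r\prod_\ell v_\ell)^{-1}$ into the first row and the factors $\prod_\ell (v_j-u_\ell)^{-1}$ into the $j$-th column, and finally insert $\prod_j(1-zT_{v_j}^{-1})$ into the determinant. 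So as a comparison of approaches there is nothing to criticize: it is the paper's own derivation, spelled out.

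However, there is a genuine gap, and it sits precisely at the step you yourself flagged as "mechanical": the claim that "because the $j$-th column depends only on $v_j$, the operator $T_{v_j}^{-1}$ acts solely on that column" is false once the factor $(1-r\prod_\ell v_\ell)^{-1}$ has been placed in the first row, since that row's entries then depend on \emph{all} of $v_1,\dots,v_m$. Expanding $\prod_j(1-zT_{v_j}^{-1})$ over subsets $K$ and the determinant over permutations, the true contribution carries the normalization $1-r\prod_{\ell\in K}(v_\ell-c)\prod_{\ell\notin K}v_\ell$ (shifted in every variable of $K$), whereas the claimed determinant's corresponding term carries $1-r(v_{j_0}-c)\prod_{\ell\neq j_0}v_\ell$ or the unshifted $1-r\prod_\ell v_\ell$, according only to the column $j_0$ where row $1$ sits in the permutation; these agree only when $K=\emptyset$ or $K=\{j_0\}$, hence only for $m=1$ or $r=0$. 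The check you propose, namely $T_{v_j}^{-1}(1-r\prod_\ell v_\ell)=1-r(v_j-c)\prod_{\ell\neq j}v_\ell$, is correct but does not repair these cross terms. Concretely, take $m=2$, $n=1$, $p_{ik}=\delta_{ik}$, $r=1$, $c=1$, $(v_1,v_2)=(2,3)$, $u_1=0$: the right-hand side of \eqref{MPTone} evaluates to $1-z^2$, while $F_{1,2}^{(z)}(\overline{u}|\overline{v})=1-4z+3z^2$; at $r=0$ both sides give $1-4z+3z^2$. (Note the left-hand side is manifestly independent of $r$, while the right-hand side is not.) The same objection applies to \eqref{MPTtwo} and, verbatim, to the first-row factors in the paper's elliptic template, so the gap is inherited from the paper's sketch rather than introduced by you; but as written your argument is only valid in the specialization $r=0$ (and trivially for one-variable determinants), which is the case yielding the scalar-product type corollary, and the insertion step needs either that restriction or a genuinely different justification for generic $r$.
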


The case $r=0$, $p_{ij}=q_{ij}=\delta_{ij}$ of \eqref{MPTone}, \eqref{MPTtwo}
are the scalar product type representations first considered by Kostov \cite{Kostovone,Kostovtwo}.

\begin{corollary}
We have the following determinant representations:
\begin{align}
F_{n,m}^{(z)}(\overline{u}|\overline{v})
&=\prod_{1 \le i < j \le m} \frac{1}{v_j-v_i}
\det_{1 \le i,j\le m} \Bigg( v_j^{i-1} 
-z (v_j-c)^{i-1} \prod_{\ell=1}^n  \frac{v_j-u_\ell}{v_j-u_\ell-c} 
\Bigg), \label{oneparameterKostov}
\\
G_{n,m}^{(z)}(\overline{u}|\overline{v})
&=(1-z)^{m-n} \prod_{1 \le i < j \le n} \frac{1}{ u_j-u_i}
\det_{1 \le i , j \le n} \Bigg(
u_j^{i-1} 
-z (u_j+c)^{i-1} \prod_{\ell=1}^m  \frac{u_j-v_\ell}{u_j-v_\ell+c} 
\Bigg). \label{oneparameterKostovvertwo}
\end{align}
\end{corollary}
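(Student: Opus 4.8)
The plan is to derive both \eqref{oneparameterKostov} and \eqref{oneparameterKostovvertwo} as the special case $r=0$, $p_{ij}=q_{ij}=\delta_{ij}$ of the preceding Proposition, equations \eqref{MPTone} and \eqref{MPTtwo}; this is precisely the specialization flagged in the sentence immediately before the statement. The only input needed is the behaviour of the building blocks $\psi_k^{A_{m-1}}(v;0,r)$ at $r=0$. From the definition recorded just after \eqref{trigvandermonde}, the $k=1$ branch $1-(-1)^{m-1}r v^m$ degenerates to $1=v^0$ when $r=0$, so that uniformly $\psi_k^{A_{m-1}}(v;0,0)=v^{k-1}$ for all $k=1,\dots,m$ (and likewise for the $A_{n-1}$ family). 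Setting $p_{ij}=\delta_{ij}$ then collapses each row sum to $\sum_{k}p_{ik}\psi_k^{A_{m-1}}(v_j;0,0)=v_j^{i-1}$.

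First I would treat \eqref{MPTone}. With $r=0$ the scalar prefactor $1-r\prod_\ell v_\ell$ equals $1$, and the normalizing determinant in the denominator becomes the Vandermonde determinant $\det_{1\le i,j\le m}(v_j^{i-1})=\prod_{1\le i<j\le m}(v_j-v_i)$, in agreement with the $r=0$ value of \eqref{trigvandermonde}, which is nonzero under the genericity hypothesis that the $v_i$ are distinct. Every factor carrying the exponent $\delta_{i1}$ reduces to $1^{\delta_{i1}}=1$, since its base $1/(1-r(\cdots))$ equals $1$ at $r=0$. The two row sums inside the determinant become $v_j^{i-1}$ and $(v_j-c)^{i-1}$ respectively, so the $(i,j)$-entry simplifies to $v_j^{i-1}-z\,(v_j-c)^{i-1}\prod_{\ell=1}^n (v_j-u_\ell)/(v_j-u_\ell-c)$, which is precisely \eqref{oneparameterKostov}.

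The computation for \eqref{MPTtwo} is identical after replacing $v$ by $u$, $m$ by $n$, $p$ by $q$, and the shift $v_j-c$ by $u_j+c$: the denominator becomes $\prod_{1\le i<j\le n}(u_j-u_i)$, the $\delta_{i1}$-factors again collapse, and the overall prefactor $(1-z)^{m-n}$ is carried through unchanged, yielding \eqref{oneparameterKostovvertwo}. I expect no substantial obstacle here — the statement is a corollary precisely because it is a one-line specialization; the only points deserving a word of care are the degeneration of the $k=1$ column of $\psi$ to the constant $1$ and the nonvanishing of the Vandermonde denominator under the distinctness assumption. As an alternative self-contained route, avoiding the $\psi$-machinery altogether, one could mimic the proof of Proposition \ref{ProptrigFW}: starting from the additive-difference-operator form \eqref{rationalsourcedifferenceone}, write $\prod_{1\le i<j\le m}(v_j-v_i)=\det_{1\le i,j\le m}(v_j^{i-1})$, absorb the factor $\prod_{k=1}^n(v_j-u_k)^{-1}$ into the $j$-th column, insert each operator $1-zT_{v_j}^{-1}$ column-wise into the determinant, and finally reabsorb $\prod_{k=1}^n(v_j-u_k)$; the same manipulation applied to \eqref{rationalsourcedifferencetwo} gives \eqref{oneparameterKostovvertwo}.
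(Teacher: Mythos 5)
Your proposal is correct and is exactly the paper's route: the corollary is obtained by setting $r=0$ and $p_{ij}=q_{ij}=\delta_{ij}$ in \eqref{MPTone}, \eqref{MPTtwo}, with $\psi_k^{A_{m-1}}(v;0,0)=v^{k-1}$ collapsing the normalizing determinant to the Vandermonde and all $\delta_{i1}$-factors to $1$. Your checks of the degenerate $k=1$ branch and the nonvanishing Vandermonde, as well as the alternative column-wise insertion argument from \eqref{rationalsourcedifferenceone}, are all consistent with the paper.
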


The following domain wall boundary type determinant representations first considered by Foda-Wheeler
\cite{FodaWheeler}
can also be derived in  the same way with deriving Proposition \ref{ProptrigFW}.
\begin{proposition}
For $n \ge m$,
we have the following determinant representations:
\begin{align}
F_{n,m}^{(z)}(\overline{u}|\overline{v})=&
\frac{\prod_{i=1}^m \prod_{k=1}^n (v_i-u_k)}{\prod_{1 \le i < j \le m} (v_j-v_i) \prod_{1 \le i < j \le n} (u_i-u_j)} 
\det_{1 \le i,j \le n} (Y),
\label{oneparameterdefFodaWheeler}
\end{align}
where $Y$ is an $n \times n$ matrix whose $(i,j)$-entry is given by
\begin{align}
Y_{ij}=
\left\{
\begin{array}{ll}
\displaystyle \frac{1}{v_i-u_j}-z \frac{1}{v_i-u_j-c}, & 1 \le i \le m, \ \ \ 1 \le j \le n \\
u_j^{n-i}, & m+1 \le i \le n, \ \ \ 1 \le j \le n
\end{array}
\right.
.
\end{align}

\begin{align}
G_{n,m}^{(z)}(\overline{u}|\overline{v})=&(1-z)^{m-n}
\frac{\prod_{i=1}^m \prod_{k=1}^n (v_i-u_k)}{\prod_{1 \le i < j \le m} (v_j-v_i) \prod_{1 \le i < j \le n} (u_i-u_j)} \det_{1 \le i,j \le n} (Z),
\label{oneparameterdefFodaWheelertwo}
\end{align}
where $Z$ is an $n \times n$ matrix whose $(i,j)$-entry is given by
\begin{align}
Z_{ij}=
\left\{
\begin{array}{ll}
\displaystyle \frac{1}{v_i-u_j}-z \frac{1}{v_i-u_j-c}, & 1 \le i \le m, \ \ \ 1 \le j \le n \\
u_j^{n-i}-z (u_j+c)^{n-i}, & m+1 \le i \le n, \ \ \ 1 \le j \le n
\end{array}
\right.
.
\end{align}
{\color{black}
For $n < m$,
we have the following determinant representations:
\begin{align}
F_{n,m}^{(z)}(\overline{u}|\overline{v})=&
\frac{\prod_{i=1}^m \prod_{k=1}^n (u_k-v_i)}{\prod_{1 \le i < j \le m} (v_i-v_j) \prod_{1 \le i < j \le n} (u_j-u_i)} \det_{1 \le i,j \le m} (U),
\end{align}
where $U$ is an $m \times m$ matrix whose $(i,j)$-entry is given by
\begin{align}
U_{ij}=
\left\{
\begin{array}{ll}
\displaystyle \frac{1}{u_i-v_j}-z \frac{1}{u_i-v_j+c}, & 1 \le i \le n, \ \ \ 1 \le j \le m \\
v_j^{m-i}-z(v_j-c)^{m-i}, & n+1 \le i \le m, \ \ \ 1 \le j \le m
\end{array}
\right.
.
\end{align}
\begin{align}
G_{n,m}^{(z)}(\overline{u}|\overline{v})=&(1-z)^{m-n}
\frac{\prod_{i=1}^m \prod_{k=1}^n (u_k-v_i)}{\prod_{1 \le i < j \le m} (v_i-v_j) \prod_{1 \le i < j \le n} (u_j-u_i)} \det_{1 \le i,j \le m}  (V),
\end{align}
where $V$ is an $m \times m$ matrix whose $(i,j)$-entry is given by
\begin{align}
V_{ij}=
\left\{
\begin{array}{ll}
\displaystyle \frac{1}{u_i-v_j}-z \frac{1}{u_i- v_j+c}, & 1 \le i \le n, \ \ \ 1 \le j \le m \\
v_j^{m-i}, & n+1 \le i \le m, \ \ \ 1 \le j \le m
\end{array}
\right.
.
\end{align}
}

\end{proposition}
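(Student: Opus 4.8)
The plan is to mirror the derivation of Proposition \ref{ProptrigFW} for the trigonometric case, replacing the multiplicative difference operators by the additive ones $T_{v_j}^{-1}$ and $T_{u_j}$ and starting from the additive-operator forms \eqref{rationalsourcedifferenceone} and \eqref{rationalsourcedifferencetwo} of the rational source functions. The crucial input is the Cauchy--Vandermonde factorization \eqref{identitytogetanotherdet}: since that identity is already purely rational (it contains no $q$), it can be reused verbatim, and no rational limit is needed. Throughout I would repeatedly use that a Vandermonde factor in one set of variables commutes past difference operators acting on the \emph{other} set of variables, so that the operators ultimately act only on $\det(X)$.

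For the case $n \ge m$ and the function $F_{n,m}^{(z)}(\overline{u}|\overline{v})$, I would first substitute \eqref{identitytogetanotherdet} into \eqref{rationalsourcedifferenceone}, pulling out the $u$-Vandermonde $\prod_{1\le i<j\le n}(u_i-u_j)$ since it is inert under $T_{v_j}^{-1}$, to obtain
\[
F_{n,m}^{(z)}(\overline{u}|\overline{v})=\frac{\prod_{i=1}^m\prod_{k=1}^n(v_i-u_k)}{\prod_{1\le i<j\le m}(v_j-v_i)\prod_{1\le i<j\le n}(u_i-u_j)}\prod_{i=1}^m(1-zT_{v_i}^{-1})\det_{1\le i,j\le n}(X).
\]
Then I would push each factor $(1-zT_{v_i}^{-1})$ inside the determinant so that it acts on the $i$-th row for $i=1,\dots,m$. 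Since $T_{v_i}^{-1}$ sends $v_i\mapsto v_i-c$, the entry $\tfrac{1}{v_i-u_j}$ becomes $\tfrac{1}{v_i-u_j}-z\tfrac{1}{v_i-u_j-c}$, while the rows $m+1\le i\le n$ (which carry no $v$-dependence) are left untouched; this produces exactly the matrix $Y$. For $G_{n,m}^{(z)}(\overline{u}|\overline{v})$ the same steps apply starting from \eqref{rationalsourcedifferencetwo}, except that the operators $(1-zT_{u_j})$, with $u_j\mapsto u_j+c$, are inserted into the $j$-th column; they act both on the entries $\tfrac{1}{v_i-u_j}$ and on the monomials $u_j^{n-i}$, so that $u_j^{n-i}\mapsto u_j^{n-i}-z(u_j+c)^{n-i}$, reproducing the two cases in $Z$, while the global factor $(1-z)^{m-n}$ is carried through unchanged.

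For the complementary case $n<m$ the identical argument applies after exchanging the roles of the $\overline{u}$ and $\overline{v}$ variables: one uses the dual Cauchy--Vandermonde factorization in which the first $n$ rows are $\tfrac{1}{u_i-v_j}$ and the remaining rows are the monomials $v_j^{m-i}$. Inserting $(1-zT_{v_j}^{-1})$ into columns (for $F$, where $\tfrac{1}{u_i-v_j}-z\tfrac{1}{u_i-v_j+c}$ and $v_j^{m-i}-z(v_j-c)^{m-i}$ appear) or $(1-zT_{u_i})$ into rows (for $G$) then yields the $m\times m$ matrices $U$ and $V$, respectively.

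The operator insertions themselves are immediate from the definitions of $T_{v_j}^{-1}$ and $T_{u_j}$, so the only step demanding genuine care is the bookkeeping of scalar prefactors and signs. In particular I expect the main obstacle to be reconciling the two conventions for the Cauchy factor, namely $\prod_{i=1}^n\prod_{k=1}^m(u_i-v_k)=(-1)^{nm}\prod_{i=1}^m\prod_{k=1}^n(v_i-u_k)$, together with the orientation of each Vandermonde $\prod(u_j-u_i)$ versus $\prod(u_i-u_j)$, when passing between \eqref{rationalsourcedifferencetwo} and the form \eqref{identitytogetanotherdet}. One must verify that all these signs and the Vandermonde denominators combine to give precisely the stated prefactors and that the factor $(1-z)^{m-n}$ in the $G$-case survives intact; this sign reconciliation, though elementary, is where an error is most likely to creep in.
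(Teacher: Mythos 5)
Your proposal is correct and matches the paper's intended proof: the paper derives this proposition ``in the same way with deriving Proposition \ref{ProptrigFW}'', i.e.\ by starting from the additive-operator forms \eqref{rationalsourcedifferenceone} and \eqref{rationalsourcedifferencetwo}, applying the Cauchy--Vandermonde factorization \eqref{identitytogetanotherdet} (and its $u\leftrightarrow v$ dual for $n<m$), and inserting $(1-zT_{v_i}^{-1})$ into rows and $(1-zT_{u_j})$ into columns, exactly as you describe. Your attention to the sign bookkeeping between $\prod_{i,k}(u_i-v_k)$ and $\prod_{i,k}(v_i-u_k)$ and the Vandermonde orientations is the right place to be careful, and those signs do cancel as you anticipate.
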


Finally, the following is a one-parameter extension of the determinant representations by
Belliard-Slavnov \cite{BS}. These representations can be derived in the same way with
Proposition \ref{ProptrigBS}.
\begin{proposition}
We have the following determinant representations:
\begin{align}
&F_{n,m}^{(z)}(\overline{u}|\overline{v}) 
=\frac{1-\Delta}{\prod_{1 \le i < j \le m} (v_j-v_i)(\eta_i-\eta_j)} \nn \\
&\times \det_{1 \le i,j \le m} \Bigg(
\Bigg(\frac{1}{\prod_{\ell=1}^m v_\ell-\Delta \prod_{\ell=1}^m \eta_\ell}
\Bigg)^{\delta_{i1}} 
\frac{ (v_i-\Delta \eta_j)
\prod_{k=1, k \neq j}^m (v_i-\eta_k) 
}{1-\Delta} \nn \\
&-z
\Bigg(\frac{1}{(v_i-c) \prod_{\ell=1, \ell \neq i}^m v_\ell-\Delta \prod_{\ell=1}^m \eta_\ell}
\Bigg)^{\delta_{i1}}
\frac{ (v_i-\Delta \eta_j-c)
\prod_{k=1, k \neq j}^m (v_i-\eta_k-c)
}{1-\Delta}
\prod_{k=1}^n \frac{v_i-u_k}{v_i-u_k-c}
\Bigg),
\label{BSrepone} 
\end{align}
\begin{align}
&G_{n,m}^{(z)}(\overline{u}|\overline{v})=
(1-z)^{m-n}
\frac{1-\Delta}{\prod_{1 \le i < j \le n} (u_j-u_i)(\eta_i-\eta_j)} 
\nn \\
&\times
\det_{1 \le i,j \le n} \Bigg( 
\Bigg(\frac{1}{\prod_{\ell=1}^n u_\ell-\Delta \prod_{\ell=1}^n \eta_\ell}
\Bigg)^{\delta_{i1}} 
  \frac{(u_i- \Delta \eta_j)  \prod_{k=1,k \neq j}^n (u_i-\eta_k) }{1-\Delta}
\nn \\
&-z
\Bigg(\frac{1}{(u_i+c) \prod_{\ell=1, \ell \neq i}^n u_\ell-\Delta \prod_{\ell=1}^n \eta_\ell}
\Bigg)^{\delta_{i1}}
\frac{(u_i-\Delta \eta_j+c)
\prod_{k=1,k \neq j}^n (u_i-\eta_k+c) }
{ 1-\Delta }
\prod_{k=1}^m \frac{u_i-v_k}{u_i-v_k+c}
\Bigg).
\label{BSreptwo}
\end{align}
\end{proposition}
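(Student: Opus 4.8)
The plan is to follow verbatim the strategy already used for Proposition~\ref{propellipticBS} and Proposition~\ref{ProptrigBS}, with the elliptic (resp. trigonometric) Frobenius formula replaced by its $p=0$ rational specialization \eqref{trigFrob}. I would handle \eqref{BSrepone} and \eqref{BSreptwo} in parallel and write out only the case of $G_{n,m}^{(z)}(\overline{u}|\overline{v})$; the case of $F_{n,m}^{(z)}(\overline{u}|\overline{v})$ is the symmetric manipulation with the roles of $\overline{u}$ and $\overline{v}$ interchanged and $T_{u_j}$ replaced by the backward shift $T_{v_j}^{-1}$. The point is that although the source function uses the additive shift $c$, the auxiliary $\Delta,\eta$--structure is still of Frobenius (Cauchy) type, so \eqref{trigFrob} is exactly the right tool.

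First I would start from the difference-operator form \eqref{rationalsourcedifferencetwo} and insert the auxiliary parameters $\Delta,\eta_1,\dots,\eta_n$ by multiplying and dividing the Vandermonde prefactor by $\prod_{1\le i<j\le n}(\eta_i-\eta_j)$ while simultaneously rewriting $\prod_{i,k}(u_i-v_k)^{-1}$ in the operand as $\prod_{i,k}(u_i-\eta_k)^{-1}\cdot\prod_{i,k}\tfrac{u_i-\eta_k}{u_i-v_k}$. This is an identity transformation (the $\eta$-factors cancel), but it exposes the piece $\frac{\prod_{1\le i<j\le n}(\eta_j-\eta_i)(u_i-u_j)}{\prod_{i,k}(u_i-\eta_k)}$ to which \eqref{trigFrob} applies after the substitution $u_i\to\eta_i,\ v_j\to u_j,\ \Lambda\to\Delta$ and a transposition of the determinant. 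Reading \eqref{trigFrob} in reverse, I would replace this piece by the scalar $\frac{1-\Delta}{\prod_\ell u_\ell-\Delta\prod_\ell\eta_\ell}$ times $\det_{1\le i,j\le n}\big(\tfrac{u_i-\Delta\eta_j}{(1-\Delta)(u_i-\eta_j)}\big)$, whose numerator $(u_i-\Delta\eta_j)$ is precisely the one appearing in \eqref{BSreptwo}. The scalar is then split as a product: the factor $(1-\Delta)$ is kept as the overall prefactor, while $\frac{1}{\prod_\ell u_\ell-\Delta\prod_\ell\eta_\ell}$ is absorbed into the first row of the determinant, which is the origin of the $\delta_{i1}$ exponents.

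Next I would fold the leftover factor $\prod_{i,k}\tfrac{u_i-\eta_k}{u_i-v_k}$ into the determinant row by row, so that the $(i,j)$-entry before the shifts reads $\big(\tfrac{1}{\prod_\ell u_\ell-\Delta\prod_\ell\eta_\ell}\big)^{\delta_{i1}}\frac{(u_i-\Delta\eta_j)\prod_{k\neq j}(u_i-\eta_k)}{(1-\Delta)\prod_k(u_i-v_k)}$. The key observation, exactly as in the earlier propositions, is that $u_i$ occurs only in the $i$-th row, so $\prod_{j}(1-zT_{u_j})$ passes inside the determinant and acts row-wise: $(1-zT_{u_i})$ acts on the $i$-th row alone, sending $u_i\mapsto u_i+c$ both in the normalization factor $\prod_\ell u_\ell-\Delta\prod_\ell\eta_\ell\mapsto (u_i+c)\prod_{\ell\neq i}u_\ell-\Delta\prod_\ell\eta_\ell$ and in the Frobenius numerator $(u_i-\Delta\eta_j)\prod_{k\neq j}(u_i-\eta_k)\mapsto (u_i-\Delta\eta_j+c)\prod_{k\neq j}(u_i-\eta_k+c)$. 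Finally I would carry the unshifted prefactor $\prod_{i,k}(u_i-v_k)$ back into the determinant by multiplying row $i$ by $\prod_k(u_i-v_k)$; this cancels the denominator of the undeformed term and leaves the quotient $\prod_k\tfrac{u_i-v_k}{u_i-v_k+c}$ multiplying the $-z$ term, which is precisely \eqref{BSreptwo}. The global factor $(1-z)^{m-n}$ is inert throughout and is simply carried along.

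I expect the only delicate point to be the bookkeeping of the scalar produced by \eqref{trigFrob}: it must be distributed as the product of an overall $(1-\Delta)$ and a single row-$1$ factor $\frac{1}{\prod_\ell u_\ell-\Delta\prod_\ell\eta_\ell}$, and one must check that the additive shift $u_i\mapsto u_i+c$ acts correctly and consistently on that $\delta_{i1}$ normalization factor as well as on the rest of the $i$-th row, so that $\prod_\ell u_\ell-\Delta\prod_\ell\eta_\ell$ and $(u_i+c)\prod_{\ell\neq i}u_\ell-\Delta\prod_\ell\eta_\ell$ end up in the correct terms of the first row. All remaining manipulations are the rational shadows of the elliptic computation in Proposition~\ref{propellipticBS}, and no genuinely new identity beyond \eqref{trigFrob} is required.
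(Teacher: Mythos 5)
Your proposal is a faithful expansion of exactly the derivation the paper intends: the paper proves this proposition only by the remark that it goes ``in the same way'' as Proposition~\ref{ProptrigBS}, which in turn is Proposition~\ref{propellipticBS} with the Frobenius formula replaced by its $p=0$ specialization \eqref{trigFrob}, and your steps (start from \eqref{rationalsourcedifferencetwo}, insert the $\eta$-kernel, read \eqref{trigFrob} in reverse after the substitution and transposition, split the resulting scalar into an overall $(1-\Delta)$ and a row-$1$ factor carrying the $\delta_{i1}$, then push $\prod_j(1-zT_{u_j})$ into the determinant row by row) are precisely that computation.

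However, the point you flag as ``the only delicate point'' and then wave through is a genuine gap, and the deferred check does not go through for $n\ge 2$. Once the factor $s=\big(\prod_\ell u_\ell-\Delta\prod_\ell\eta_\ell\big)^{-1}$ has been absorbed into the first row, the entries of row $1$ depend on \emph{all} of $u_1,\dots,u_n$, so the commutation $\prod_{j}(1-zT_{u_j})\det(m_{ij})=\det\big((1-zT_{u_i})m_{ij}\big)$ --- valid only when the $i$-th row depends on $u_i$ alone --- is no longer available. Concretely, expanding the proposed determinant by multilinearity over the subset $K$ of rows taking the $-z$ term yields the row-$1$ normalization shifted only in its $u_1$-slot (and only when $1\in K$), whereas undoing \eqref{trigFrob} on the shifted rows produces the factor $\prod_\ell\tilde u_\ell-\Delta\prod_\ell\eta_\ell$ with $\tilde u_k=u_k+c$ for \emph{every} $k\in K$; hence the $K$-term of \eqref{BSreptwo} equals the corresponding term of $G^{(z)}_{n,m}$ multiplied by the spurious ratio $\big(\prod_\ell\tilde u_\ell-\Delta\prod_\ell\eta_\ell\big)\big/\big(\prod_\ell u_\ell-\Delta\prod_\ell\eta_\ell\big)$ (respectively with $u_1\mapsto u_1+c$ in the denominator when $1\in K$), which differs from $1$ whenever $K\not\subseteq\{1\}$. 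A direct check at $n=m=2$, $c=1$, $u=(1,3)$, $v=(5,7)$, $\eta=(2,11)$, $\Delta=13$ confirms this: the $K=\{2\}$ contribution of \eqref{BSreptwo} is the correct source term times $\frac{u_1(u_2+c)-\Delta\eta_1\eta_2}{u_1u_2-\Delta\eta_1\eta_2}=\frac{282}{283}$, so the claimed identity already fails at order $z$. To be fair, the paper's own one-line proof inherits the identical issue from Propositions~\ref{propellipticBS} and \ref{ProptrigBS}; note that the $\Delta\to\infty$ corollary is immune, since there the normalization becomes independent of the $u$'s. A correct write-up must either keep the scalar $s$ outside the determinant (so the answer is a sum over $K$ of determinants rather than a single $n\times n$ determinant of binomial entries), or restrict to situations ($n=1$, or $\Delta\to\infty$) where the row-$1$ factor ceases to couple the shift variables; your argument, like the paper's sketch, does not supply the missing justification at precisely the step you identified.
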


Taking the limit $\Delta \to \infty$ gives the determinants in \cite{BS}.

\begin{corollary}
We have the following determinant representations:
\begin{align}
F_{n,m}^{(z)}(\overline{u}|\overline{v}) 
=&\frac{1}{\prod_{1 \le i < j \le m} (v_j-v_i)(\eta_i-\eta_j)} \nn \\
&\times \det_{1 \le i,j \le m} \Bigg(
\prod_{k=1, k \neq j}^m (v_i-\eta_k) 
-z
\prod_{k=1, k \neq j}^m (v_i-\eta_k-c)
\prod_{k=1}^n \frac{v_i-u_k}{v_i-u_k-c}
\Bigg),  \\
G_{n,m}^{(z)}(\overline{u}|\overline{v})=&
(1-z)^{m-n}
\frac{1}{\prod_{1 \le i < j \le n} (u_j-u_i)(\eta_i-\eta_j)} 
\nn \\
&\times
\det_{1 \le i,j \le n} \Bigg( 
  \prod_{k=1,k \neq j}^n (u_i-\eta_k) 
-z
\prod_{k=1,k \neq j}^n (u_i-\eta_k+c) 
\prod_{k=1}^m \frac{u_i-v_k}{u_i-v_k+c}
\Bigg).
\end{align}
\end{corollary}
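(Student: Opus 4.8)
The plan is to derive both representations as the $\Delta\to\infty$ limit of the one-parameter family \eqref{BSrepone} and \eqref{BSreptwo} proved in the preceding Proposition. I would carry out the computation for $F_{n,m}^{(z)}(\overline{u}|\overline{v})$ in full, the case of $G_{n,m}^{(z)}(\overline{u}|\overline{v})$ being identical after replacing $v$ by $u$, sending $c\to -c$ in the shifted products, and carrying the overall factor $(1-z)^{m-n}$ along.

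First I would record the entrywise limits of the matrix in \eqref{BSrepone}. For any row $i\neq 1$ the exponents $\delta_{i1}$ vanish, and since $\frac{v_i-\Delta\eta_j}{1-\Delta}\to\eta_j$ and $\frac{v_i-\Delta\eta_j-c}{1-\Delta}\to\eta_j$ as $\Delta\to\infty$, the $(i,j)$-entry converges to $\eta_j\big(\prod_{k\neq j}(v_i-\eta_k)-z\prod_{k\neq j}(v_i-\eta_k-c)\prod_{k=1}^n\frac{v_i-u_k}{v_i-u_k-c}\big)$. For the first row the two scalars raised to $\delta_{11}=1$, namely $\big(\prod_\ell v_\ell-\Delta\prod_\ell\eta_\ell\big)^{-1}$ and $\big((v_1-c)\prod_{\ell\neq1}v_\ell-\Delta\prod_\ell\eta_\ell\big)^{-1}$, both behave like $\big(-\Delta\prod_\ell\eta_\ell\big)^{-1}$, so the first-row entries acquire exactly this common extra factor.

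Next I would extract the divergent scalars by multilinearity of the determinant: factor $\eta_j$ out of each column $j$ to produce $\prod_{j=1}^m\eta_j$, and factor $\big(-\Delta\prod_\ell\eta_\ell\big)^{-1}$ out of the first row. Since $\prod_{j=1}^m\eta_j$ cancels $\prod_\ell\eta_\ell$, what remains in front of the determinant of the claimed limiting matrix is the single scalar $\frac{1}{-\Delta}$. Combining this with the prefactor $\frac{1-\Delta}{\prod_{1\le i<j\le m}(v_j-v_i)(\eta_i-\eta_j)}$ and using $\frac{1-\Delta}{-\Delta}\to1$ reproduces exactly the asserted formula for $F_{n,m}^{(z)}(\overline{u}|\overline{v})$.

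The only real obstacle is the bookkeeping of the three competing divergences — the factor $(1-\Delta)$ in the prefactor, the factors $\eta_j$ collected from the columns, and the factor $\big(-\Delta\prod_\ell\eta_\ell\big)^{-1}$ extracted from the first row — and checking that they combine to a finite nonzero constant rather than to $0$ or $\infty$. Because all of these are exact algebraic identities in $\Delta$ before the limit is taken, the cancellations can be verified rigorously by the multilinearity argument above, so no genuine analytic difficulty arises beyond ordering the factorizations carefully.
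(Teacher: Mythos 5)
Your proposal is correct and is exactly the paper's own route: the paper obtains this corollary by taking the limit $\Delta \to \infty$ of the one-parameter representations \eqref{BSrepone} and \eqref{BSreptwo}, which is precisely what you do. Your detailed bookkeeping (multiplying the first row by $-\Delta\prod_{\ell}\eta_{\ell}$, extracting $\eta_j$ from each column after passing to the limit, and matching $\frac{1-\Delta}{-\Delta}\to 1$) correctly fills in the cancellations the paper leaves implicit.
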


\subsection{A complex analytic proof of rational source identity}
We give a complex analytic proof of the rational source identity
which goes parallel with the trigonometric case.
A different proof showing as an identity between determinants can be found in {\color{black} \cite[Appendix A]{GZZ}.}
We prove the following polynomial version

\begin{align}
&\sum_{K \subset [1,\dots,m]} (-z)^{|K|} \prod_{\substack{i \in K \\ j \not\in K}} \frac{v_i-v_j-c}{v_i-v_j}
\prod_{\substack{ j \not\in K  \\ 1 \le k \le n }} (v_j-u_k-c)
\prod_{\substack{ i \in K  \\ 1 \le k \le n }} (v_i-u_k)
\nonumber \\
=&(1-z)^{m-n} \sum_{K \subset [1,\dots,n]} (-z)^{|K|} \prod_{\substack{i \in K \\ j \not\in K}} \frac{u_i-u_j+c}{u_i-u_j}
\prod_{\substack{ j \not\in K  \\ 1 \le k \le m }} (v_k-u_j-c)
\prod_{\substack{ i \in K  \\ 1 \le k \le m }} (v_k-u_i).
\label{polyrationalKajiharaone}
\end{align}
{\color{black}
Note \eqref{polyrationalKajiharaone}
can be obtained from
\eqref{rationalKajihara} by multiplying both hand sides by $\prod_{i=1}^m \prod_{k=1}^n (v_i-u_k-c)$
and the identities are equivalent.}
Let us denote the left hand side and the right hand side of  \eqref{polyrationalKajiharaone}
as $P_{n,m}^{(z)}(\overline{u}|\overline{v})$ and $Q_{n,m}^{(z)}(\overline{u}|\overline{v})$
respectively.
Relations with $F_{n,m}^{(z)}(\overline{u}|\overline{v})$ and $G_{n,m}^{(z)}(\overline{u}|\overline{v})$
are
\begin{align}
P_{n,m}^{(z)}(\overline{u}|\overline{v})&=\prod_{i=1}^m \prod_{k=1}^n (v_i-u_k-c) F_{n,m}^{(z)}(\overline{u}|\overline{v}), \\
Q_{n,m}^{(z)}(\overline{u}|\overline{v})&=\prod_{i=1}^m \prod_{k=1}^n (v_i-u_k-c) G_{n,m}^{(z)}(\overline{u}|\overline{v}).
\end{align}

We assume $m \le n$ for the proof.
First,
we find the following properties hold.
\begin{lemma} \label{degreesymmetrylemma}
$P_{n,m}^{(z)}(\overline{u}|\overline{v})$ and $Q_{n,m}^{(z)}(\overline{u}|\overline{v})$
are symmetric polynomials in $v_1, \dots, v_m$.
The degree of $v_i \ (i=1,2,\dots,m)$ is at most  $n$ for both polynomials.
\end{lemma}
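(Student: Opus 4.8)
The plan is to mirror the proof of the trigonometric Lemma~\ref{trigdegreesymmetrylemma}, replacing the scalar-product determinant forms \eqref{trigKostovFWtype} and \eqref{trigKostovFWtypetwo} by their rational counterparts \eqref{oneparameterKostov} and \eqref{oneparameterKostovvertwo}. Multiplying \eqref{oneparameterKostov} by $\prod_{i=1}^m\prod_{k=1}^n(v_i-u_k-c)$ and absorbing, for each $j$, the factor $\prod_{k=1}^n(v_j-u_k-c)$ into the $j$-th column, so that $\prod_{\ell=1}^n\frac{v_j-u_\ell}{v_j-u_\ell-c}\cdot\prod_{k=1}^n(v_j-u_k-c)=\prod_{\ell=1}^n(v_j-u_\ell)$, I expect to obtain the polynomial determinant form
\begin{align}
P_{n,m}^{(z)}(\overline{u}|\overline{v})
=\frac{1}{\prod_{1\le i<j\le m}(v_j-v_i)}
\det_{1\le i,j\le m}\Big(v_j^{i-1}\prod_{k=1}^n(v_j-u_k-c)-z(v_j-c)^{i-1}\prod_{k=1}^n(v_j-u_k)\Big),
\end{align}
in which the pole $v_j=u_k+c$ of the original rational function has been cancelled. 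Similarly, multiplying \eqref{oneparameterKostovvertwo} by the same product and using $\prod_{i'=1}^m(v_{i'}-u_j-c)=(-1)^m\prod_{\ell=1}^m(u_j-v_\ell+c)$ should give an $n\times n$ determinant with $(i,j)$ entry
\begin{align}
u_j^{i-1}\prod_{i'=1}^m(v_{i'}-u_j-c)-(-1)^m z(u_j+c)^{i-1}\prod_{\ell=1}^m(u_j-v_\ell),
\end{align}
so that $Q_{n,m}^{(z)}(\overline{u}|\overline{v})$ equals $(1-z)^{m-n}\prod_{1\le i<j\le n}(u_j-u_i)^{-1}$ times this determinant.

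From the form for $Q$ the claim is immediate: each matrix entry depends on the $v$'s only through the products $\prod_{i'=1}^m(v_{i'}-u_j-c)$ and $\prod_{\ell=1}^m(u_j-v_\ell)$, each of which is symmetric in $v_1,\dots,v_m$ and of degree at most one in every $v_\ell$. Hence every entry is symmetric in $v_1,\dots,v_m$ and linear in each $v_\ell$, and expanding the determinant as a sum of products of one entry per column yields a symmetric polynomial of degree at most $n$ in each $v_\ell$, the prefactor $(1-z)^{m-n}$ and the Vandermonde in the $u$'s being independent of $v$.

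For $P$ I would argue exactly as in the trigonometric case: since every entry of the $j$-th column depends only on $v_j$, the determinant vanishes whenever $v_i=v_j$ for $i\neq j$, so it is divisible by $\prod_{1\le i<j\le m}(v_j-v_i)$ and the quotient $P_{n,m}^{(z)}(\overline{u}|\overline{v})$ is a genuine polynomial. The $(i,j)$ entry has degree $i-1+n$ in $v_j$, so the determinant has degree at most $m-1+n$ in each $v_j$; subtracting the degree $m-1$ of the Vandermonde denominator leaves degree at most $n$. Symmetry follows because interchanging $v_i$ and $v_j$ transposes two columns and simultaneously changes the sign of $\prod_{1\le i<j\le m}(v_j-v_i)$, leaving the ratio invariant. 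The only point needing care — routine, but the step where errors are likeliest — is the bookkeeping of the sign $(-1)^m$ and the verification that the pole-cancelling factor $\prod_k(v_j-u_k-c)$ interacts correctly with $\prod_\ell\frac{v_j-u_\ell}{v_j-u_\ell-c}$ when pushed into the determinant; once the two polynomial determinant forms are established, the symmetry and the degree bound are read off directly as above.
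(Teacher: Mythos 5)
Your proof is correct and takes essentially the same route as the paper: the paper likewise multiplies the determinant representations \eqref{oneparameterKostov} and \eqref{oneparameterKostovvertwo} by $\prod_{i=1}^m\prod_{k=1}^n(v_i-u_k-c)$ to obtain polynomial determinant forms (its \eqref{toseesymmetryuvariables} and the companion formula for $Q_{n,m}^{(z)}$, which coincide with yours up to transposition and a uniform shift of the power bases that leaves the determinant unchanged), and then reads off the symmetry and the degree bound exactly as in the trigonometric Lemma \ref{trigdegreesymmetrylemma}. Your sign bookkeeping for the $Q$-entry, including the factor $(-1)^m$, is consistent with the paper's entry $u_i^{j-1}\prod_{k=1}^m(v_k-u_i-c)-z(u_i+c)^{j-1}\prod_{k=1}^m(v_k-u_i)$.
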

\begin{proof}
From the determinant forms of $F_{n,m}^{(z)}(\overline{u}|\overline{v})$ \eqref{oneparameterKostov}
and $G_{n,m}^{(z)}(\overline{u}|\overline{v})$
\eqref{oneparameterKostovvertwo}, we have
the following determinant forms of $P_{n,m}^{(z)}(\overline{u}|\overline{v})$
and  $Q_{n,m}^{(z)}(\overline{u}|\overline{v})$
\begin{align}
&P_{n,m}^{(z)}(\overline{u}|\overline{v}) \nn \\
=&\prod_{1 \le i < j \le m} \frac{1}{v_j-v_i}
\det_{1 \le i,j \le m} \Bigg(
(v_i+c)^{j-1} \prod_{k=1}^n (v_i-u_k-c)-z v_i^{j-1} \prod_{k=1}^n (v_i-u_k)
\Bigg), \label{toseesymmetryuvariables} \\
&Q_{n,m}^{(z)}(\overline{u}|\overline{v}) \nn \\
=&(1-z)^{m-n} \prod_{1 \le i < j \le n} \frac{1}{ u_j-u_i}
\det_{1 \le i,j \le n} \Bigg(
u_i^{j-1} \prod_{k=1}^m (v_k-u_i-c)
-z (u_i+c)^{j-1} \prod_{k=1}^m (v_k-u_i) 
\Bigg).
\end{align}
Lemma \ref{degreesymmetrylemma} can be checked
in the same way with the trigonometric case using these determinant forms.
\end{proof}
Due to Lemma \ref{degreesymmetrylemma},
one notes that it is enough to check $P_{n,m}^{(z)}(\overline{u}|\overline{v})=Q_{n,m}^{(z)}(\overline{u}|\overline{v})$
for $(n+1)^m$ distinct points in $(v_1,\dots,v_m)$,
and we check the equality for the following $2n \times (2n-1) \times \cdots \times (2n-m+1) (\ge (n+1)^m)$
distinct points
$v_1=u_{p_1}+\epsilon_1, \ v_2=u_{p_2}+\epsilon_2, \ \dots, v_m=u_{p_m}+\epsilon_m$,
$p_1,\dots,p_m \in \{1,\dots,n \}$, $\epsilon_1,\dots,\epsilon_m \in \{ 0,c \}$
and $v_i \neq v_j$ for $i \neq j$.
It is easy to check the following cases which correspond to $p_i=p_j$ for some $i \neq j$.

\begin{lemma} \label{vanishinglemma}
If $v_i=u_k$, $v_j=u_k+c$ for some $i,j,k \ (i \neq j)$,
we have 
\begin{align}
P_{n,m}^{(z)}(\overline{u}|\overline{v})=Q_{n,m}^{(z)}(\overline{u}|\overline{v})=0.
\end{align}
\end{lemma}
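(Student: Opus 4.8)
The plan is to follow exactly the template of the trigonometric vanishing Lemma \ref{trigvanishinglemma}, of which this is the rational degeneration. By the symmetry of $P_{n,m}^{(z)}(\overline{u}|\overline{v})$ and $Q_{n,m}^{(z)}(\overline{u}|\overline{v})$ in $v_1,\dots,v_m$ (Lemma \ref{degreesymmetrylemma}) together with their manifest symmetry in $u_1,\dots,u_n$, it is enough to prove the vanishing in the representative case $k=n$, $j=m$, that is, when $v_m=u_n+c$ and $v_i=u_n$ for some $i\in\{1,\dots,m-1\}$.

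First I would establish the rational analogues of the factorizations \eqref{subtrigone}, \eqref{subtrigtwo}. Setting $v_m=u_n+c$ in $P_{n,m}^{(z)}(\overline{u}|\overline{v})$, the factor $\prod_{j\not\in K,\,1\le k\le n}(v_j-u_k-c)$ carries a vanishing $j=m$, $k=n$ term $v_m-u_n-c=0$ whenever $m\not\in K$, so only subsets $K=K'\sqcup\{m\}$ with $K'\subset[1,\dots,m-1]$ contribute. Writing out the surviving summand, the $i=m$ contribution $\prod_{k=1}^n(v_m-u_k)$ produces $c\prod_{k=1}^{n-1}(u_n+c-u_k)$, while the identities $v_m-u_n=c$ and $v_j-u_n-c=-(u_n+c-v_j)$ let the remaining $v_m$-dependent pieces collapse, term by term, into the $K'$-independent product $\prod_{j=1}^{m-1}(v_j-u_n)$. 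The residual sum over $K'$ is precisely the lower-rank function, giving
\begin{align}
P_{n,m}^{(z)}(\overline{u}|\overline{v})\big|_{v_m=u_n+c}
=-zc\prod_{k=1}^{n-1}(u_n+c-u_k)\prod_{j=1}^{m-1}(v_j-u_n)\,
P_{n-1,m-1}^{(z)}(u_1,\dots,u_{n-1}|v_1,\dots,v_{m-1}),
\end{align}
and the identical identity for $Q_{n,m}^{(z)}(\overline{u}|\overline{v})$, where the prefactor $(1-z)^{m-n}$ is preserved since $(m-1)-(n-1)=m-n$.

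The decisive output is the factor $\prod_{j=1}^{m-1}(v_j-u_n)$ common to both factorizations. Specializing further $v_i=u_n$ for some $i\in\{1,\dots,m-1\}$ annihilates this product, and the combined specialization $v_i=u_n$, $v_m=u_n+c$ is exactly the hypothesis $v_i=u_k$, $v_j=u_k+c$ with $k=n$, $j=m$ and $i\neq m$. Hence both $P_{n,m}^{(z)}(\overline{u}|\overline{v})$ and $Q_{n,m}^{(z)}(\overline{u}|\overline{v})$ vanish, and the remaining index choices follow by the symmetries noted above.

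The main obstacle is the bookkeeping inside the surviving sum: checking that the $K'$-dependent factors in the $P$-summand (the ratios over $j\in[1,m-1]\setminus K'$ and the $k=n$ products) reassemble, after the cancellation $\tfrac{u_n-v_j}{u_n+c-v_j}\cdot(v_j-u_n-c)=v_j-u_n$, into the $K'$-independent factor $\prod_{j=1}^{m-1}(v_j-u_n)$ times exactly the summand of $P_{n-1,m-1}^{(z)}$. For the $Q$-side the analogous but slightly more delicate point is that at $v_m=u_n+c$ the factor $\prod_{j\not\in K,\,1\le k\le m}(v_k-u_j-c)$ contributes $\prod_{j\not\in K}(u_n-u_j)$ through its $k=m$ part, forcing $n\in K$; once this is observed the same collection of factors goes through verbatim. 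Once the two factorizations are in hand the vanishing is immediate.
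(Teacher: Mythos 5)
Your proposal is correct and takes essentially the same route as the paper: both reduce to the representative case $v_m=u_n+c$, derive the factorizations \eqref{subone}--\eqref{subtwo} whose common prefactor $\prod_{j=1}^{m-1}(v_j-u_n)$ vanishes under the further substitution $v_i=u_n$, and handle the remaining index choices by symmetry. Your write-up merely makes explicit the bookkeeping the paper leaves implicit (the cancellation $\frac{u_n-v_j}{u_n+c-v_j}\,(v_j-u_n-c)=v_j-u_n$ on the $P$-side, the forcing of $m\in K$ resp.\ $n\in K$, and the preservation of $(1-z)^{m-n}$), all of which checks out.
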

\begin{proof}
We check the case $k=n$ as the other cases can be checked in the same way.
It is easy to see from the expressions in \eqref{polyrationalKajiharaone}
that substituting $v_m=u_n+c$ in $P_{n,m}^{(z)}(\overline{u}|\overline{v})$
and $Q_{n,m}^{(z)}(\overline{u}|\overline{v})$, we have
\begin{align}
P_{n,m}^{(z)}(\overline{u}|\overline{v})|_{v_m=u_n+c}
&=-cz \prod_{j=1}^{n-1} (u_n-u_j+c) \prod_{j=1}^{m-1} (v_j-u_n)
P_{n-1,m-1}^{(z)}(u_1,\dots,u_{n-1}|v_1,\dots,v_{m-1}), \label{subone} \\
Q_{n,m}^{(z)}(\overline{u}|\overline{v})|_{v_m=u_n+c}
&=-cz \prod_{j=1}^{n-1} (u_n-u_j+c) \prod_{j=1}^{m-1} (v_j-u_n)
Q_{n-1,m-1}^{(z)}(u_1,\dots,u_{n-1}|v_1,\dots,v_{m-1}). \label{subtwo}
\end{align}
Since there is an overall factor $\prod_{j=1}^{m-1} (v_j-u_n)$ in \eqref{subone}
and \eqref{subtwo}, 
further substituting $v_j=u_n$ for some $j \ (1 \le j \le m-1)$,
both $P_{n,m}^{(z)}(\overline{u}|\overline{v})|_{v_m=u_n+c}$
and $Q_{n,m}^{(z)}(\overline{u}|\overline{v})|_{v_m=u_n+c}$ vanish.
Since both $P_{n,m}^{(z)}(\overline{u}|\overline{v})$
and $Q_{n,m}^{(z)}(\overline{u}|\overline{v})$
are symmetric polynomials in $v$ variables, the other cases follow by symmetry.
\end{proof}

We next assume $p_i \neq p_j$ for $i \neq j$.
For $I=\{ 1 \le i_1 < i_2 < \cdots < i_{|I|} \le n \}$ and $J=\{1 \le j_1 < j_2 < \cdots < j_{|J|} \le n \}$
such that $I \cup J$ is a set of $m$ distinct integers in $[ 1,\dots,n ]$,
let us denote by $\overline{v}=\{ \overline{u}_I, \overline{u}_J+c \}$ the 
following type of substitution
\begin{align}
v_{q_1}=u_{i_1}, \dots, v_{q_{|I|}}=u_{i_{|I|}}, v_{r_1}=u_{j_1}+c, \dots, v_{r_{|J|}}=u_{j_{|J|}}+c,
\end{align}
for some $\{ q_1, \dots, q_{|I|}, r_1,\dots, r_{|J|} \}=[1,\dots,m ]$.
Note $|I|+|J|=m$.

We have the following explicit evaluations.
\begin{proposition} \label{PQspecializations}
We have
\begin{align}
P_{n,m}^{(z)}(\overline{u}|\{ \overline{u}_I, \overline{u}_J+c \})
=(-z)^{|J|} \prod_{ \substack{ i \in I \\ j \in J  }  } (u_j-u_i)
\prod_{ \substack{ i \in I \\ 1 \le j \le n  }  } (u_i-u_j-c)
 \prod_{ \substack{ j \in J \\ k \in [1,\dots,n] \backslash I  }  } (u_j-u_k+c), \\
Q_{n,m}^{(z)}(\overline{u}|\{ \overline{u}_I, \overline{u}_J+c \})
=(-z)^{|J|} \prod_{ \substack{ i \in I \\ j \in J  }  } (u_j-u_i)
\prod_{ \substack{ i \in I \\ 1 \le j \le n  }  } (u_i-u_j-c)
 \prod_{ \substack{ j \in J \\ k \in [1,\dots,n] \backslash I  }  } (u_j-u_k+c).
\end{align}
\end{proposition}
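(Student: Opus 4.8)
The plan is to mirror the proof of the trigonometric analogue, Proposition~\ref{trigPQspecializations}: I substitute $\overline{v}=\{\overline{u}_I,\overline{u}_J+c\}$ directly into the polynomial forms $P_{n,m}^{(z)}$ and $Q_{n,m}^{(z)}$ defined by \eqref{polyrationalKajiharaone}, use the vanishing of the linear factors to cut the summation over $K$ down to (almost) a single term, and evaluate what survives in closed form. The key mechanism is that under the substitution $v_{q_s}=u_{i_s}$ and $v_{r_s}=u_{j_s}+c$ several of the factors $(v_i-u_k)$ and $(v_j-u_k-c)$ vanish identically, forcing most summands to drop out.

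For $P_{n,m}^{(z)}$, I would first record that $\prod_{i\in K,\,1\le k\le n}(v_i-u_k)$ contains the vanishing factor $(v_{q_s}-u_{i_s})=0$ whenever an $I$-slot $q_s$ lies in $K$, while $\prod_{j\notin K,\,1\le k\le n}(v_j-u_k-c)$ contains $(v_{r_s}-u_{j_s}-c)=0$ whenever a $J$-slot $r_s$ lies outside $K$. Since the slots $\{q_1,\dots,q_{|I|}\}\cup\{r_1,\dots,r_{|J|}\}$ exhaust $[1,\dots,m]$, the only surviving summand is $K=\{r_1,\dots,r_{|J|}\}$. Substituting this single $K$ and simplifying, the cross-ratio $\prod_{i\in K,\,j\notin K}\frac{v_i-v_j-c}{v_i-v_j}$ combines with $\prod_{i\in K,k}(v_i-u_k)$ to produce the factors $\prod_{i\in I,\,j\in J}(u_j-u_i)$ and $\prod_{j\in J,\,k\in[1,\dots,n]\backslash I}(u_j-u_k+c)$, while $\prod_{j\notin K,k}(v_j-u_k-c)$ yields $\prod_{i\in I,\,1\le j\le n}(u_i-u_j-c)$, giving exactly the claimed factorization.

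For $Q_{n,m}^{(z)}$ the same mechanism, applied now to $\prod_{i\in K,\,1\le k\le m}(v_k-u_i)$ and $\prod_{j\notin K,\,1\le k\le m}(v_k-u_j-c)$, kills every summand with $K\cap I\neq\emptyset$ or $J\not\subset K$, so the sum over $K\subset[1,\dots,n]$ collapses to $J\subset K\subset[1,\dots,n]\backslash I$. Here, unlike the $P$ case, several terms survive; factoring out $(-z)^{|J|}$ together with the common products reduces the residual sum to
\[
\sum_{J\subset K\subset[1,\dots,n]\backslash I}(-z)^{|K|-|J|}\prod_{\substack{i\in K\backslash J\\ j\in[1,\dots,n]\backslash(I\cup K)}}\frac{u_i-u_j-c}{u_i-u_j}.
\]
Writing $K'=K\backslash J$ (so $K'$ ranges over subsets of the $(n-m)$-element set $[1,\dots,n]\backslash(I\cup J)$) and grouping by $|K'|$, this becomes $\sum_{\ell'=0}^{n-m}(-z)^{\ell'}\binom{n-m}{\ell'}=(1-z)^{n-m}$, once I establish the rational degeneration of \eqref{oneqidentity}, namely $\sum_{|K|=\ell}\prod_{i\in K,\,j\notin K}\frac{u_i-u_j-c}{u_i-u_j}=\binom{N}{\ell}$ for an $N$-element index set. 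This follows exactly as for \eqref{oneqidentity}: the apparent poles $u_i=u_j$ cancel, so the sum is constant by Liouville's theorem, and letting $u_1,\dots,u_N\to\infty$ (each summand tending to $1$) identifies the constant as $\binom{N}{\ell}$. The resulting $(1-z)^{n-m}$ cancels the prefactor $(1-z)^{m-n}$ of \eqref{rationalsourcefunctiontwo}, leaving precisely the same factorized expression as for $P$.

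The main obstacle is the $Q$ evaluation: as in the trigonometric case, tracking which factors vanish under the substitution and carefully assembling the surviving products over index pairs is delicate, and the final collapse hinges on the rational binomial identity above. Conceptually, however, nothing new is required beyond the rational limit of Proposition~\ref{trigPQspecializations}, so the argument is a lengthy but essentially routine verification.
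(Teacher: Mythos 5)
Your proposal is correct and follows essentially the same route as the paper's own proof: for $P$ the vanishing factors collapse the sum to the single term $K=\{r_1,\dots,r_{|J|}\}$, and for $Q$ the sum restricts to $J\subset K\subset[1,\dots,n]\backslash I$ and the residual sum is evaluated by the binomial identity $\sum_{|K|=\ell}\prod_{i\in K,\,j\notin K}\frac{u_i-u_j+c}{u_i-u_j}=\binom{N}{\ell}$, exactly as in \eqref{beforesimplyfyingq}--\eqref{beforesimplyfyingqtwo} (the paper merely cites this identity as well known, while you supply the Liouville argument used there only for the $q$-analogue \eqref{oneqidentity}). The only discrepancy is a harmless sign slip: your residual cross-ratio reads $\frac{u_i-u_j-c}{u_i-u_j}$ where the correct factor is $\frac{u_i-u_j+c}{u_i-u_j}$, which does not affect the conclusion since the identity holds for either sign of $c$ (replace $u_i\mapsto -u_i$).
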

\begin{proof}
It is easy to check the case for
$P_{n,m}^{(z)}(\overline{u}|\{ \overline{u}_I, \overline{u}_J+c \})$.
After substituting
$
v_{q_1}=u_{i_1}, \dots, v_{q_{|I|}}=u_{i_{|I|}}, v_{r_1}=u_{j_1}+c, \dots, v_{r_{|J|}}=u_{j_{|J|}}+c$,
one notes from the factors
$\displaystyle \prod_{\substack{ j \not\in K  \\ 1 \le k \le n }} (v_j-u_k-c)$
and
$\displaystyle \prod_{\substack{ i \in K  \\ 1 \le k \le n }} (v_i-u_k)$
that only the summand corresponding to $K=\{ r_1,\dots,r_{|J|} \}$  survives,
and we get
\begin{align}
&P_{n,m}^{(z)}(\overline{u}|\{ \overline{u}_I, \overline{u}_J+c \}) \nn \\
=&(-z)^{|J|} \prod_{\substack{i \in J \\ j \in  I}} \frac{u_i-u_j}{u_i-u_j+c}
\prod_{\substack{ j \in I  \\ 1 \le k \le n }} (u_j-u_k-c)
\prod_{\substack{ j \in J  \\ 1 \le k \le n }} (u_j-u_k+c) \nn \\
=&(-z)^{|J|} \prod_{ \substack{ i \in I \\ j \in J  }  } (u_j-u_i)
\prod_{ \substack{ i \in I \\ 1 \le j \le n  }  } (u_i-u_j-c)
 \prod_{ \substack{ j \in J \\ k \in [1,\dots,n] \backslash I  }  } (u_j-u_k+c).
\end{align}

Next, we check the case for 
$Q_{n,m}^{(z)}(\overline{u}|\{ \overline{u}_I, \overline{u}_J+c \})$.
Substituting
$
v_{q_1}=u_{i_1}, \dots, v_{q_{|I|}}=u_{i_{|I|}}, v_{r_1}=u_{j_1}+c, \dots, v_{r_{|J|}}=u_{j_{|J|}}+c$,
we note from the factor
$\displaystyle \prod_{\substack{ i \in K  \\ 1 \le k \le m }} (v_k-u_i)$
that summands labeled by $K$ satisfying $K \cap I \neq \phi$ vanish.
We also note from the factor
$\displaystyle \prod_{\substack{ j \not\in K  \\ 1 \le k \le m }} (v_k-u_j-c)$
that summands vanish unless $J \subset K$.
Therefore we can restrict the sum to
$\displaystyle \sum_{J \subset K \subset [1,\dots,n] \backslash I}$
and after simplifications, we can write 
$Q_{n,m}^{(z)}(\overline{u}|\{ \overline{u}_I, \overline{u}_J+c \})$ as
\begin{align}
&Q_{n,m}^{(z)}(\overline{u}|\{ \overline{u}_I, \overline{u}_J+c \}) \nn \\
=&
\prod_{ \substack{ i \in I \\ j \in J  }  } (u_j-u_i)
\prod_{ \substack{ i \in I \\ 1 \le j \le n  }  } (u_i-u_j-c)
 \prod_{ \substack{ j \in J \\ k \in [1,\dots,n] \backslash I  }  } (u_j-u_k+c)
(-z)^{|J|}
(1-z)^{m-n} \nn \\
&\times \sum_{J \subset K \subset [1,\dots,n] \backslash I} 
(-z)^{|K|-|J|}
\prod_{\substack{i \in K \backslash J \\ j \in [1,\dots,n] \backslash \{I,K \} }}
\frac{u_i-u_j+c}{u_i-u_j}. \label{beforesimplyfyingq}
\end{align}
Next, we note the factorization of the sum in \eqref{beforesimplyfyingq}
\begin{align}
&\sum_{J \subset K \subset [1,\dots,n] \backslash I} 
(-z)^{|K|-|J|}
\prod_{\substack{i \in K \backslash J \\ j \in [1,\dots,n] \backslash \{I,K \} }}
\frac{u_i-u_j+c}{u_i-u_j} \nn \\
=&\sum_{\ell=|J|}^{n-|I|}
\sum_{\substack{ K: |K|=\ell   \\ J \subset K \subset [1,\dots,n] \backslash I }}
(-z)^{\ell-|J|}
\prod_{\substack{i \in K \backslash J \\ j \in [1,\dots,n] \backslash \{I,K \} }}
\frac{u_i-u_j+c}{u_i-u_j} \nn \\
=&\sum_{\ell=|J|}^{n-|I|}
(-z)^{\ell-|J|} \dbinom{ n-|I|-|J|    }{\ell- |J | }=(1-z)^{n-|I|-|J|}=(1-z)^{n-m}. \label{beforesimplyfyingqtwo}
\end{align}
Here we used the well-known identity
\begin{align}
\sum_{K \subset [1,\dots,n]: |K|=\ell }
\prod_{\substack{i \in K \\ j \in [1,\dots,n] \backslash K  }}
\frac{u_i-u_j+c}{u_i-u_j}=\dbinom{ n  }{\ell}.
\end{align}
Combining
\eqref{beforesimplyfyingq} and \eqref{beforesimplyfyingqtwo} gives
\begin{align}
Q_{n,m}^{(z)}(\overline{u}|\{ \overline{u}_I, \overline{u}_J+c \})
=(-z)^{|J|} \prod_{ \substack{ i \in I \\ j \in J  }  } (u_j-u_i)
\prod_{ \substack{ i \in I \\ 1 \le j \le n  }  } (u_i-u_j-c)
 \prod_{ \substack{ j \in J \\ k \in [1,\dots,n] \backslash I  }  } (u_j-u_k+c).
\end{align}

\end{proof}
From
Lemma \ref{vanishinglemma} and Proposition \ref{PQspecializations},
we checked the equality for enough specializations of $(v_1,\dots,v_m)$ and
hence, \eqref{polyrationalKajiharaone} follows.

{\color{black}
The case $m>n$ can be proved in the same way with the case $m \leq n$.
The following lemmas and proposition correspond to
Lemma \ref{degreesymmetrylemma},
Lemma \ref{vanishinglemma} and Proposition \ref{PQspecializations}.
\begin{lemma} 
$P_{n,m}^{(z)}(\overline{u}|\overline{v})$ and $Q_{n,m}^{(z)}(\overline{u}|\overline{v})$
are symmetric polynomials in $u_1, \dots, u_n$.
The degree of $u_i \ (i=1,2,\dots,n)$ is at most  $m$
for both polynomials.
\end{lemma}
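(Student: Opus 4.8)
The plan is to run exactly the argument of Lemma \ref{degreesymmetrylemma}, only now extracting the relevant properties in the variables $u_1,\dots,u_n$ instead of $v_1,\dots,v_m$. The two scalar-product type determinant representations \eqref{oneparameterKostov} and \eqref{oneparameterKostovvertwo} were derived via the Minin-Pronko-Tarasov construction, which places no restriction on the relative sizes of $m$ and $n$; hence they remain valid for $m>n$, and so do the determinant forms of $P_{n,m}^{(z)}(\overline{u}|\overline{v})$ and $Q_{n,m}^{(z)}(\overline{u}|\overline{v})$ obtained from them by multiplying by $\prod_{i=1}^m\prod_{k=1}^n(v_i-u_k-c)$. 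I would therefore reuse those two determinant identities verbatim and simply read off the $u$-dependence.

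For $Q_{n,m}^{(z)}(\overline{u}|\overline{v})$ I would use the $n\times n$ determinant form in the $u$-variables displayed in the proof of Lemma \ref{degreesymmetrylemma}, namely the product of $(1-z)^{m-n}\prod_{1\le i<j\le n}(u_j-u_i)^{-1}$ with $\det_{1\le i,j\le n}\bigl(u_i^{j-1}\prod_{k=1}^m(v_k-u_i-c)-z(u_i+c)^{j-1}\prod_{k=1}^m(v_k-u_i)\bigr)$. Symmetry in $u$ is immediate: exchanging $u_i\leftrightarrow u_{i'}$ interchanges rows $i$ and $i'$ of the determinant and simultaneously changes the sign of the Vandermonde prefactor, so the product is unchanged. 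The apparent poles at $u_i=u_{i'}$ coming from $\prod_{i<j}(u_j-u_i)^{-1}$ cancel because two rows of the determinant coincide there, so $Q_{n,m}^{(z)}$ is genuinely a polynomial in $u$. For the degree, the $(i,j)$-entry has degree $(j-1)+m$ in $u_i$, so in each expansion term $u_i$ occurs to degree at most $(n-1)+m$; dividing by the factor $\prod_{i<j}(u_j-u_i)$, which carries degree $n-1$ in $u_i$, leaves degree at most $m$ in every $u_i$.

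For $P_{n,m}^{(z)}(\overline{u}|\overline{v})$ I would instead use the $m\times m$ determinant \eqref{toseesymmetryuvariables} in the $v$-variables. The key observation is that here the $u$-variables enter each entry only through the two full products $\prod_{k=1}^n(v_i-u_k-c)$ and $\prod_{k=1}^n(v_i-u_k)$. These are symmetric in $u_1,\dots,u_n$, so every entry, hence the whole determinant, hence $P_{n,m}^{(z)}$, is symmetric in $u$; and each entry has degree exactly $1$ in any single $u_k$. Since each term in the expansion of the $m\times m$ determinant is a product of $m$ entries, it has degree at most $m$ in each $u_k$, while the prefactor $\prod_{i<j}(v_j-v_i)^{-1}$ is independent of $u$; this gives the degree bound. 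Polynomiality of $P_{n,m}^{(z)}$ in $u$ is in fact immediate from its defining sum \eqref{polyrationalKajiharaone}, whose $u$-dependence is entirely through such products.

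I do not expect a genuine obstacle here, since the computation is the mirror image of Lemma \ref{degreesymmetrylemma}. The only points demanding care are confirming that \eqref{oneparameterKostov}, \eqref{oneparameterKostovvertwo} and the resulting determinant forms are indeed unconstrained by the sign of $m-n$, and keeping the degree bookkeeping, entry degrees against the Vandermonde denominator, straight; both are routine.
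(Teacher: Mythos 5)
Your proposal is correct and follows essentially the same route as the paper, which proves this lemma by mirroring Lemma \ref{degreesymmetrylemma}: symmetry and polynomiality of $Q_{n,m}^{(z)}(\overline{u}|\overline{v})$ are read off from the $n\times n$ determinant form in the $u$-variables (row swaps against the Vandermonde sign, apparent poles at $u_i=u_j$, and the degree count $(n-1)+m$ minus $n-1$), while the properties of $P_{n,m}^{(z)}(\overline{u}|\overline{v})$ follow since the $u$-variables enter only through the fully symmetric products $\prod_{k}(v_i-u_k)$ and $\prod_{k}(v_i-u_k-c)$, each linear in every $u_k$; your check that the determinant representations \eqref{oneparameterKostov} and \eqref{oneparameterKostovvertwo} impose no constraint on the sign of $m-n$ is also accurate, as their derivation via \eqref{rationalsourcedifferenceone}--\eqref{rationalsourcedifferencetwo} never uses it. Your added observation that for $P_{n,m}^{(z)}$ the symmetry and degree bound are in fact immediate from the defining sum \eqref{polyrationalKajiharaone} is a valid small simplification.
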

\begin{lemma} 
If $u_i=v_k$, $u_j=v_k-c$ for some $i,j,k \ (i \neq j)$,
we have 
\begin{align}
P_{n,m}^{(z)}(\overline{u}|\overline{v})=Q_{n,m}^{(z)}(\overline{u}|\overline{v})=0.
\end{align}
\end{lemma}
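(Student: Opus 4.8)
The plan is to imitate the proof of Lemma \ref{vanishinglemma}, with the roles of the $\overline u$- and $\overline v$-variables exchanged. By the preceding lemma, $P_{n,m}^{(z)}(\overline u|\overline v)$ and $Q_{n,m}^{(z)}(\overline u|\overline v)$ are symmetric polynomials in $u_1,\dots,u_n$, and they are likewise symmetric in $v_1,\dots,v_m$ (manifestly for $P_{n,m}^{(z)}$ from the form \eqref{polyrationalKajiharaone} together with its determinant representation, and in the same way for $Q_{n,m}^{(z)}$). Since the hypothesis picks out two distinct $u$-indices $i\neq j$ and one $v$-index $k$, it therefore suffices to establish the vanishing for one representative choice, say $k=m$, $j=n$, $i=n-1$, the general case following from these two symmetries.

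The main point is that the reduction formulas \eqref{subone} and \eqref{subtwo} were obtained simply by specializing $v_m=u_n+c$ in \eqref{polyrationalKajiharaone}, an argument that only records which subsets $K$ survive the specialization and never compares $m$ with $n$; hence they hold verbatim in the present regime $m>n$. The substitution $v_m=u_n+c$ is exactly the condition $u_n=v_m-c$ (the case $j=n$, $k=m$ of the hypothesis), and on the right-hand sides of \eqref{subone} and \eqref{subtwo} it produces the common prefactor $\prod_{j=1}^{n-1}(u_n-u_j+c)$. I would then impose the second condition $u_{n-1}=v_m$, i.e. $u_{n-1}=u_n+c$: this makes the $j=n-1$ term $u_n-u_{n-1}+c$ of that prefactor vanish, while the residual $P_{n-1,m-1}^{(z)}$ and $Q_{n-1,m-1}^{(z)}$ remain finite (they no longer involve $u_n$). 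Consequently $P_{n,m}^{(z)}=Q_{n,m}^{(z)}=0$ under $u_{n-1}=v_m$, $u_n=v_m-c$, and the stated vanishing follows in general by the $\overline u$- and $\overline v$-symmetry. The only step needing care is the verification that \eqref{subone} and \eqref{subtwo} carry over unchanged to $m>n$, which is routine since their derivation is insensitive to the sign of $m-n$.

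An alternative and conceptually cleaner route is to exploit the self-duality of the polynomial source functions: comparing the defining sums term by term gives $P_{n,m}^{(z)}(\overline u|\overline v)=(-1)^{mn}(1-z)^{m-n}\,Q_{m,n}^{(z)}(\overline v|\overline u)\big|_{c\to-c}$, together with the analogous identity obtained by interchanging $P$ and $Q$. For $m>n$ the functions $Q_{m,n}^{(z)}$ and $P_{m,n}^{(z)}$ lie in the regime already settled by Lemma \ref{vanishinglemma}, and under $\overline u\leftrightarrow\overline v$, $c\to-c$ the hypothesis $u_i=v_k$, $u_j=v_k-c$ is carried precisely onto the hypothesis $v_i=u_k$, $v_j=u_k+c$ of that lemma; the vanishing is then immediate, with no division or genericity in $z$ required. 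Here the only delicate point is pinning down the sign $(-1)^{mn}$ and the power of $(1-z)$ in the duality relation, which amounts to a short count of the sign changes in the two products.
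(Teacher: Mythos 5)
Your first argument is precisely the paper's intended proof: the paper disposes of the case $m>n$ with the single remark that it ``can be proved in the same way'' as Lemma \ref{vanishinglemma}, and your instantiation of that remark is correct. The key observation — that \eqref{subone} and \eqref{subtwo} carry over verbatim because their derivation only records which subsets $K$ survive the substitution $v_m=u_n+c$ (which is the same locus as $u_n=v_m-c$) and never compares $m$ with $n$ — is accurate, and then the further specialization $u_{n-1}=v_m$ kills the factor $u_n-u_{n-1}+c=(v_m-c)-v_m+c=0$ inside the prefactor $\prod_{j=1}^{n-1}(u_n-u_j+c)$, while $P_{n-1,m-1}^{(z)}$ and $Q_{n-1,m-1}^{(z)}$ are polynomials and hence finite there; the reduction to the representative indices $k=m$, $i=n-1$, $j=n$ via the manifest $u$- and $v$-symmetries of \eqref{polyrationalKajiharaone} is also fine. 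Your alternative duality route is likewise correct and is genuinely different from anything written in the paper: a term-by-term comparison of the defining sums (relabelling $K$ and counting the $n(m-|K|)+n|K|=nm$ sign flips in the cross factors) confirms
\begin{align}
P_{n,m}^{(z)}(\overline u|\overline v)=(-1)^{mn}(1-z)^{m-n}\,Q_{m,n}^{(z)}(\overline v|\overline u)\big|_{c\to-c},
\end{align}
and since $Q_{m,n}^{(z)}(\overline v|\overline u)$ has second subscript $n\le m$ it falls under the regime of Lemma \ref{vanishinglemma}, whose hypothesis $v_i=u_k$, $v_j=u_k+c$ is carried by the swap and $c\to-c$ exactly onto $u_i=v_k$, $u_j=v_k-c$. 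What the duality buys is a proof with no recomputation at all (and it would equally dispatch the companion proposition for $m>n$); what the paper's route buys is that the reduction formulas \eqref{subone}–\eqref{subtwo} are already in hand. Both versions are sound.
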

For $I=\{ 1 \le i_1 < i_2 < \cdots < i_{|I|} \le m \}$ and $J=\{1 \le j_1 < j_2 < \cdots < j_{|J|} \le m \}$
such that $I \cup J$ is a set of $n$ distinct integers in $[ 1,\dots,m ]$,
let us denote by $\overline{u}=\{ \overline{v}_I, \overline{v}_J-c \}$ the 
following type of substitution
\begin{align}
u_{q_1}=v_{i_1}, \dots, u_{q_{|I|}}=v_{i_{|I|}}, u_{r_1}=v_{j_1}-c, \dots, u_{r_{|J|}}=v_{j_{|J|}}-c,
\end{align}
for some $\{ q_1, \dots, q_{|I|}, r_1,\dots, r_{|J|} \}=[1,\dots,n ]$.
Note $|I|+|J|=n$.
\begin{proposition}
We have
\begin{align}
P_{n,m}^{(z)}(\{ \overline{v}_I, \overline{v}_J-c \}|\overline{v})
=&(-z)^{|J|} (1-z)^{m-n} \nn \\
&\times \prod_{ \substack{ i \in I \\ j \in J  }  } (v_i-v_j)
\prod_{ \substack{ i \in I \\ 1 \le j \le m  }  } (v_j- v_i-c)
 \prod_{ \substack{ j \in J \\ k \in [1,\dots,m] \backslash I  }  } (v_k-v_j+c), \\
Q_{n,m}^{(z)}(\{ \overline{v}_I, \overline{v}_J-c \}|\overline{v})
=&(-z)^{|J|} (1-z)^{m-n} \nn \\
&\times \prod_{ \substack{ i \in I \\ j \in J  }  } (v_i-v_j)
\prod_{ \substack{ i \in I \\ 1 \le j \le m  }  } (v_j- v_i-c)
 \prod_{ \substack{ j \in J \\ k \in [1,\dots,m] \backslash I  }  } (v_k-v_j+c).
\end{align}
\end{proposition}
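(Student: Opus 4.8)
The plan is to follow the argument for the case $m \le n$ (Proposition~\ref{PQspecializations}) almost verbatim, with the roles of the $u$- and $v$-variables interchanged. Since $P_{n,m}^{(z)}$ sums over $K \subset [1,\dots,m]$ while $Q_{n,m}^{(z)}$ sums over $K \subset [1,\dots,n]$, specializing the $u$-variables (rather than the $v$-variables) will now collapse $Q_{n,m}^{(z)}$ to a single surviving summand, while $P_{n,m}^{(z)}$ will retain a sum that must be resummed by a binomial identity. Combined with the symmetry-and-degree lemma and the vanishing lemma stated just above this proposition — which reduce the polynomial identity to checking the specializations $\overline{u} = \{\overline{v}_I, \overline{v}_J - c\}$ with the $p_i$ distinct — this proposition completes the verification of \eqref{polyrationalKajiharaone} for $m > n$.

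For $Q_{n,m}^{(z)}$ I would substitute $u_{q_\alpha} = v_{i_\alpha}$ and $u_{r_\beta} = v_{j_\beta} - c$. The factor $\prod_{i\in K, k}(v_k - u_i)$ vanishes whenever $K$ contains some index $q_\alpha$ (as then $v_{i_\alpha} - u_{q_\alpha} = 0$), forcing $K \cap \{q_1,\dots,q_{|I|}\} = \emptyset$; the factor $\prod_{j\notin K, k}(v_k - u_j - c)$ vanishes whenever some $r_\beta \notin K$ (as then $v_{j_\beta} - u_{r_\beta} - c = 0$), forcing $\{r_1,\dots,r_{|J|}\} \subset K$. Hence only $K = \{r_1,\dots,r_{|J|}\}$ survives, contributing $(-z)^{|J|}$ together with the prefactor $(1-z)^{m-n}$ already present in the definition. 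The ratio factor then becomes $\prod_{j\in J, i\in I}\frac{v_j - v_i}{v_j - v_i - c}$, and cancelling its denominator $\prod_{j\in J, i\in I}(v_j - v_i - c)$ against the corresponding $k \in J$ terms of $\prod_{i\in I, k}(v_k - v_i - c)$ yields the stated factorized expression.

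For $P_{n,m}^{(z)}$ the same substitution makes $\prod_{i\in K, k}(v_i - u_k)$ vanish unless $K \cap I = \emptyset$, and $\prod_{j\notin K, k}(v_j - u_k - c)$ vanish unless $J \subset K$, so the sum restricts to $J \subset K \subset [1,\dots,m]\setminus I$. Writing $K = J \sqcup K'$ with $K' \subset [1,\dots,m]\setminus(I\cup J)$ and factoring out the $J$-dependent terms, the residual sum over $K'$ has the shape $\sum_{K'}(-z)^{|K'|}\prod_{i\in K', j\notin K'}\frac{v_i - v_j - c}{v_i - v_j}$ over a set of size $m-n$, which by the binomial identity used in \eqref{beforesimplyfyingqtwo} equals $(1-z)^{m-n}$. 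This reproduces exactly the prefactor and the product of $v$-factors found for $Q_{n,m}^{(z)}$, so $P_{n,m}^{(z)} = Q_{n,m}^{(z)}$ at these points.

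The main obstacle is the bookkeeping of the surviving products under the two-valued specialization $u = v_I$ or $v_J - c$: one must track precisely which linear factors $(v_k - u_i)$ and $(v_k - u_j - c)$ degenerate, and then reorganize the remaining products — in particular verifying that the ratio factors $\prod \frac{v_j - v_i}{v_j - v_i - c}$ telescope against the appropriate portion of the other products, and keeping track of the signs $(v_i - v_j + c) = -(v_j - v_i - c)$ — to reach the symmetric factorized form. The binomial resummation on the $P$-side is then routine, being identical in form to the computation in the $m \le n$ case.
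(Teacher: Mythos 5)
Your proposal is correct and matches the paper's intended argument: the paper proves this proposition only by remarking that the case $m>n$ "can be proved in the same way" as Proposition \ref{PQspecializations}, and your mirrored version — with the roles swapped so that $Q_{n,m}^{(z)}$ collapses to the single summand $K=\{r_1,\dots,r_{|J|}\}$ while $P_{n,m}^{(z)}$ restricts to $J \subset K \subset [1,\dots,m]\setminus I$ and is resummed over the size-$(m-n)$ residual set via the binomial identity to produce $(1-z)^{m-n}$ — is exactly that argument. The sign bookkeeping you flag, using $(v_i-v_j+c)=-(v_j-v_i-c)$ to telescope the ratio denominators against the appropriate linear factors, indeed closes the computation to the stated symmetric form.
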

}

\subsection{Rational version of Lascoux's symmetrization formulas}
We prove symmetrization formulas
which are rational version of Lascoux's symmetrization formulas \cite{Lascoux},
Theorem 3 and Theorem 4, which special cases of rational source functions appear.
We introduce notations for divided difference operators and symmetrization.
{\color{black}
For a function $f(u_1,\dots,u_n)$ of $n$-variables $(n \geq 2)$,
we introduce divided difference operators
$\partial_k$ $k=1,\dots,n-1$ by acting on $f(u_1,\dots,u_n)$ as
\begin{align}
&\partial_k f(u_1,\dots,u_n) 
:=\frac{f(u_1,\dots,u_k,u_{k+1},\dots,u_n)-f(u_1,\dots,u_{k+1},u_{k},\dots,u_n)}{u_k-u_{k+1}}.
\end{align}
For $n \geq 1$, we introduce the following symmetrizing operation
\begin{align}
&\mathrm{Sym}_c (f(u_1,\dots,u_n)):=\sum_{w \in S_n} w \cdot \Bigg(  \Delta(1,2,\dots,n) f(u_1,\dots,u_n) \Bigg),
\label{rationalsymmetrization}
\end{align}
where $\displaystyle \Delta(k_1,k_2,\dots,k_n):=\prod_{1 \le i < j \le n} \frac{u_{k_i}-u_{k_j}-c}{u_{k_i}-u_{k_j}}$.
}
We also define $\theta$ to be shifting the index of variables
$\theta u_k:=u_{k+1}$ with $u_{k+n}:=u_k+c$.

The following is a rational version of {\color{black} \cite[Theorem 3]{Lascoux}.}
{\color{black}
\begin{theorem}
For $n \geq 2$,
we have
\begin{align}
&\mathrm{Sym}_c \Bigg( (1-\theta)^{n-1} \prod_{j=2}^n \prod_{k=1}^n (u_j-v_k) f(u_1) 
\Bigg) \nn \\
=&(n-1)!(-c)^{n-1} \frac{\prod_{i=1}^n \prod_{k=1}^n (v_i-u_k)(v_i-u_k-c)}{\prod_{1 \le i < j \le n} (v_j-v_i) \prod_{1 \le i < j \le n} (u_i-u_j)} 
\nn \\
&\times
\det_{1 \le j,k \le n} \Bigg(\frac{1}{(v_j-u_k)(v_j-u_k-c)}  \Bigg)
\partial_{n-1} \cdots \partial_1 f(u_1).
\label{rationalLascouxsymmetrization}
\end{align}
\end{theorem}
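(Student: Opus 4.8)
The plan is to linearize in $f$, collapse the symmetrizer to a single subset sum, and then recognize that subset sum as a source function that the determinant machinery of the previous subsections evaluates.

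\textbf{Linearization and reduction to one coefficient.} First I would use that both sides are linear in $f$ and that the iterated divided difference of a one-variable function is the Newton divided difference,
\[
\partial_{n-1}\cdots\partial_1 f(u_1)=\sum_{j=1}^{n}\frac{f(u_j)}{\prod_{k\neq j}(u_j-u_k)} .
\]
On the left, since $(1-\theta)^{n-1}=\sum_{i=0}^{n-1}(-1)^i\binom{n-1}{i}\theta^{i}$ only involves $\theta^{i}$ with $0\le i\le n-1$, the argument $\theta^{i}u_1=u_{i+1}$ never wraps around, so no shift $u_j\mapsto u_j+c$ enters the argument of $f$. Hence, after applying $\mathrm{Sym}_c$, the left-hand side is again a combination $\sum_{j=1}^n A_j\,f(u_j)$ of the values $f(u_1),\dots,f(u_n)$ only. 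Both sides are symmetric in $u_1,\dots,u_n$ — the left because $\mathrm{Sym}_c$ runs over all of $S_n$, the right because its prefactor and the divided difference are separately symmetric — so it suffices to match the coefficient $A_1$ of $f(u_1)$.

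\textbf{Collapsing the symmetrizer.} To compute $A_1$ on the left I would expand $\sum_{w\in S_n}\sum_i(-1)^i\binom{n-1}{i}\,w\!\cdot\!\big(\Delta(1,\dots,n)\,(\theta^iP)\,f(u_{i+1})\big)$ with $P=\prod_{j=2}^n\prod_k(u_j-v_k)$, and observe that each $w$ contributes through the unique index $i=w^{-1}(1)-1$. Writing $p=w^{-1}(1)$ and using $w(p)=1$, the factor $\prod_k(u_p-v_k)^{-1}$ hidden in $\theta^{p-1}P$ cancels against the term $a=p$, leaving a product in which an index $m\ge 2$ carries $\prod_k(u_m-v_k+c)$ if $m$ precedes the value $1$ and $\prod_k(u_m-v_k)$ otherwise. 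The block structure of $\Delta(w(1),\dots,w(n))$ then factorizes into a ``before'' part, an ``after'' part, and order-independent cross terms; summing over the internal orderings uses the lemma
\[
\sum_{w\in S_m}\ \prod_{1\le i<j\le m}\frac{x_{w(i)}-x_{w(j)}-c}{x_{w(i)}-x_{w(j)}}=m!,
\]
which follows because this sum is $\tfrac1D$ times the antisymmetrization of $\prod_{i<j}(x_i-x_j-c)$, whose top-degree part is exactly the Vandermonde $D=\prod_{i<j}(x_i-x_j)$. Since $\binom{n-1}{p-1}(p-1)!(n-p)!=(n-1)!$ is independent of $p$, the $n!$-fold sum collapses to
\[
A_1=(n-1)!\sum_{K\subseteq\{2,\dots,n\}}(-1)^{|K|}\prod_{m\in K}a_m\prod_{m\notin K}b_m\prod_{\substack{m\in K\\ m'\notin K}}\frac{u_m-u_{m'}-c}{u_m-u_{m'}},
\]
with $a_m=\prod_k(u_m-v_k+c)\,\tfrac{u_m-u_1-c}{u_m-u_1}$ and $b_m=\prod_k(u_m-v_k)\,\tfrac{u_1-u_m-c}{u_1-u_m}$.

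\textbf{Identification with the determinant, and the obstacle.} The surviving expression is an $f$-free subset sum of exactly the rational source-function type, now in the $n-1$ variables $u_2,\dots,u_n$. I would evaluate it with the same difference-operator and Cauchy-determinant argument used to derive \eqref{oneparameterKostov}, turning it into the double alternant $\det_{1\le j,k\le n}\big(1/((v_j-u_k)(v_j-u_k-c))\big)$ with the prefactor of \eqref{rationalLascouxsymmetrization}, and then compare it with the coefficient of $f(u_1)$ extracted from the right-hand side, namely $(n-1)!(-c)^{n-1}$ times that determinant divided by $\prod_{k\neq1}(u_1-u_k)\prod_{i<j}(v_j-v_i)\prod_{i<j}(u_i-u_j)$ and multiplied by $\prod_{i,k}(v_i-u_k)(v_i-u_k-c)$. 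The main obstacle is this last identification: the per-element weights $a_m,b_m$ carry full products over the $v_k$ together with an extra $u_1$-dependent factor rather than a single spectral parameter, so recognizing the subset sum as the expansion of the double alternant requires care, most cleanly via a descending induction on $n$ peeling off one variable as in the substitutions $v_m\to u_n+c$ used in the complex-analytic proofs. The $\theta$-shift bookkeeping of the previous step is routine but must be tracked precisely, since a single misindexed wrap-around would destroy the cancellation $w(p)=1$ that produces the clean subset sum.
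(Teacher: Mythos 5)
Your first two steps are sound and, up to reorganization, coincide with the paper's reduction: symmetry in $u_1,\dots,u_n$ plus the Newton form of $\partial_{n-1}\cdots\partial_1$ reduce everything to matching the coefficient of $f(u_1)$, and your collapse of the $S_n$-sum to a subset sum over $K\subseteq\{2,\dots,n\}$ via $\sum_{w\in S_m} w\cdot\Delta=m!$ and $\binom{n-1}{p-1}(p-1)!(n-p)!=(n-1)!$ is correct; the paper instead keeps the permutation sum (its intermediate identity \eqref{reductiontoshow}) and only invokes $\sum_{w}\Delta=n!$ after specializing the $v$'s, so your collapsed form is a legitimate, slightly cleaner variant of that reduction. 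The genuine gap is the final step, and it is worse than the ``requires care'' you flag: the collapsed sum is \emph{not} of rational source-function type, so the difference-operator/Cauchy-determinant derivation of \eqref{oneparameterKostov} cannot be applied to it as it stands. In every $F$- or $G$-type sum, the weights on $K$ and on its complement differ by a uniform shift by $c$ of each linear factor; but your $u_1$-factors are $\frac{u_m-u_1-c}{u_m-u_1}$ for $m\in K$ and $\frac{u_1-u_{m'}-c}{u_1-u_{m'}}=\frac{u_{m'}-u_1+c}{u_{m'}-u_1}$ for $m'\notin K$, i.e.\ $u_1$ behaves as if it lay outside $K$ when paired with $K$-elements and inside $K$ when paired with complement elements. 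The pair $(u_m-u_1-c,\ u_m-u_1+c)$ differs by $2c$, not $c$, and no choice of auxiliary spectral parameter (nor a telescoping pair $\mu,\mu-c$ of ratio weights) reproduces it, so the identification with the double alternant does not follow from the machinery you cite.

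What remains is exactly the substantive part of the paper's proof. After your collapse, the needed statement is that $(n-1)!$ times your subset sum equals $\frac{(n-1)!}{-c\,\prod_{j=2}^n(u_1-u_j)}P_{n,n}^{(z=1)}(\overline{u}|\overline{v})$, equivalently the stated right-hand side via \eqref{Pnnz=1}, and the paper proves its equivalent form \eqref{reductiontoshow} by precisely your fallback route: both sides are polynomials of degree at most $n-1$ in each $v_j$ (a bound that does hold for your collapsed sum, one linear factor per $m\in\{2,\dots,n\}$), so it suffices to verify $(2n-1)(2n-2)\cdots n\ \ge\ n^n$ specializations with the $v_j$ drawn from $\{u_1,u_2,u_2+c,\dots,u_n,u_n+c\}$; the vanishing cases are Lemma \ref{vanishinglemma}, and the surviving factorized evaluations are matched against Proposition \ref{PQspecializations}. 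You gesture at this (``descending induction \dots as in the substitutions $v_m\to u_n+c$'') but carry out none of the evaluations, and they are where the actual work lies; note that at such specializations only a single subset $K$ survives in your sum, so the computation is entirely feasible --- it simply has to be done. A minor further omission: the paper treats $c=0$ separately because its reformulation \eqref{rationalLascoux} divides by $-c$, whereas in your formulation, which matches the theorem's right-hand side with its factor $(-c)^{n-1}$, that case is harmless.
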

Before giving a proof, let us give some remarks.
When $n=2$,
the left hand side of \eqref{rationalLascouxsymmetrization} is explicitly
\begin{align}
&\frac{(u_1-u_2-c)(u_2-v_1)(u_2-v_2)+(u_2-u_1-c)(u_2-v_1+c)(u_2-v_2+c)}{u_1-u_2} f(u_1)
\nn \\
&+\frac{(u_1-u_2-c)(u_1-v_1+c)(u_1-v_2+c)+(u_2-u_1-c)(u_1-v_1)(u_1-v_2)}{u_2-u_1} f(u_2),
\label{ntwoexamplelhs}
\end{align}
and the right hand side is explicitly
\begin{align}
&\frac{-c \prod_{i=1}^2 \prod_{k=1}^2 (v_i-u_k)(v_i-u_k-c)}{(v_2-v_1)(u_1-u_2)} 
\det_{1 \le j,k \le 2} \Bigg(\frac{1}{(v_j-u_k)(v_j-u_k-c)}  \Bigg) \frac{ f(u_1)-f(u_2) }{u_1-u_2}.
\label{ntwoexamplerhs}
\end{align}
One can check that all the factors
\begin{align}
&\frac{(u_1-u_2-c)(u_2-v_1)(u_2-v_2)+(u_2-u_1-c)(u_2-v_1+c)(u_2-v_2+c)}{u_1-u_2},
\nn \\
&\frac{(u_1-u_2-c)(u_1-v_1+c)(u_1-v_2+c)+(u_2-u_1-c)(u_1-v_1)(u_1-v_2)}{u_2-u_1},
\nn \\
&\frac{-c \prod_{i=1}^2 \prod_{k=1}^2 (v_i-u_k)(v_i-u_k-c)}{(v_2-v_1)(u_1-u_2)} 
\det_{1 \le j,k \le 2} \Bigg(\frac{1}{(v_j-u_k)(v_j-u_k-c)}  \Bigg),
\end{align}
can be expanded as
\begin{align}
-c(c^2+cu_1+cu_2-cv_1-cv_2-u_1v_1-u_2v_1-u_1v_2-u_2v_2+2u_1 u_2+2v_1v_2),
\end{align}
and both \eqref{ntwoexamplelhs} and \eqref{ntwoexamplerhs} can be rewritten as
\begin{align}
-c(c^2+cu_1+cu_2-cv_1-cv_2-u_1v_1-u_2v_1-u_1v_2-u_2v_2+2u_1 u_2+2v_1v_2) \frac{ f(u_1)-f(u_2) }{u_1-u_2}.
\end{align}

Although we assume $n \geq 2$ in the statement,
one can also include the case $n=1$ by naturally interpreting
the $n=1$ case of
$(1-\theta)^{n-1}$, $\prod_{j=2}^n \prod_{k=1}^n (u_j-v_k)$ and
$\partial_{n-1} \cdots \partial_1 f(u_1)$ as 1, 1, and $f(u_1)$.
One can see that both hand sides of \eqref{rationalLascouxsymmetrization} give $f(u_1)$.

From the determinant representation of $F_{n,n}^{(z)}(\overline{u}|\overline{v})$
\eqref{oneparameterdefFodaWheeler}, we have
\begin{align}
&P_{n,n}^{(z=1)}(\overline{u}|\overline{v}) \nn \\
=&
\frac{\prod_{i=1}^n \prod_{k=1}^n (v_i-u_k)(v_i-u_k-c)}{\prod_{1 \le i < j \le n} (v_j-v_i) \prod_{1 \le i < j \le n} (u_i-u_j)} 
\det_{1 \le j,k \le n} \Bigg( \frac{1}{v_j-u_k}-\frac{1}{v_j-u_k-c}  \Bigg) \nn \\
=&
(-c)^n \frac{\prod_{i=1}^n \prod_{k=1}^n (v_i-u_k)(v_i-u_k-c)}{\prod_{1 \le i < j \le n} (v_j-v_i) \prod_{1 \le i < j \le n} (u_i-u_j)} 
\det_{1 \le j,k \le n} \Bigg(\frac{1}{(v_j-u_k)(v_j-u_k-c)}  \Bigg),
\label{Pnnz=1}
\end{align}
which is nothing but the Gaudin-Izergin-Korepin determinant \cite{Izergin,Korepin,Gaudin},
and \eqref{rationalLascouxsymmetrization} can be rewritten as
\begin{align}
&\mathrm{Sym}_c \Bigg( (1-\theta)^{n-1} \prod_{j=2}^n \prod_{k=1}^n (u_j-v_k) f(u_1) 
\Bigg) 
=\frac{(n-1)!}{-c} P_{n,n}^{(z=1)}(\overline{u}|\overline{v})
\partial_{n-1} \cdots \partial_1 f(u_1), \label{rationalLascoux}
\end{align}
if $c \neq 0$. When $c=0$, due to the factor $(-c)^{n-1}$
in the right hand side and $n \geq 2$, one notes that
\eqref{rationalLascouxsymmetrization} can be written as
\begin{align}
\mathrm{Sym}_{c=0} \Bigg( (1-\theta)^{n-1} \prod_{j=2}^n \prod_{k=1}^n (u_j-v_k) f(u_1) 
\Bigg)=0.
\label{rationalLascouxcspecial}
\end{align}
Finally, let us remark that the factor $[n]!$ in Equation (6) in
\cite[Theorem 3]{Lascoux} is a typo of $[n-1]!$.
\cite[Theorem 4]{Lascoux} which follows from the theorem does not need to be corrected.
}
\begin{proof}
{\color{black}
We first check when $c=0$ which is explicitly \eqref{rationalLascouxcspecial}.
Let us denote $\prod_{j=2}^n \prod_{k=1}^n (u_j-v_k) f(u_1)$ as $g(u_1,u_2,\dots,u_n)$.
Using $\displaystyle \sum_{k=0}^{n-1} (-1)^k \binom{n-1}{k}=0$,
one can check the left hand side of \eqref{rationalLascouxcspecial} vanishes in the following way
\begin{align}
&\mathrm{Sym}_{c=0} \Big( (1-\theta)^{n-1} g(u_1,u_2,\dots,u_n)
\Big) \nn \\
=&\sum_{k=0}^{n-1} (-1)^k \binom{n-1}{k} \mathrm{Sym}_{c=0} \Big( 
g(u_{k+1},u_{k+2},\dots,u_n,u_1,\dots,u_k)
\Big) \nn \\
=&\sum_{k=0}^{n-1} (-1)^k \binom{n-1}{k} \mathrm{Sym}_{c=0} \Big( 
g(u_{1},u_{2},\dots,u_n) \Big)=0.
\end{align}

Next, we prove the case when $c \neq 0$
which can be rewritten as \eqref{rationalLascoux}.}
This can be proved in the same way with the proof of Theorem 3 in \cite{Lascoux}.
One can show that both hand sides of \eqref{rationalLascoux}
are symmetric in $u_1,\dots,u_n$.
The symmetry of the $u$ variables of the right hand side can be easily checked from the form \eqref{toseesymmetryuvariables}.
Since both hand sides are symmetric in $u_1,\dots,u_n$,
it is enough to check the equality between coefficients of $f(u_1)$ of both hand sides.
Using $\displaystyle (1-\theta)^{n-1}=\sum_{\ell=1}^n \binom{n-1}{\ell-1} (-1)^{\ell-1} \theta^{\ell-1}$,
we can write the left hand side of \eqref{rationalLascoux} as
\begin{align}
&\sum_{\ell=1}^n  {\color{black} (-1)^{\ell-1} } \binom{n-1}{\ell-1}
\sum_{w \in S_n} w \cdot
\Bigg(
{\color{black}\Delta(1,2,\dots,n)} \nn \\
&\times \prod_{j=2}^{n+1-\ell} \prod_{k=1}^n (u_{j+\ell-1}-v_k) \prod_{j=n+2-\ell}^n \prod_{k=1}^n
(u_{j+\ell-1-n}-v_k+c) f(u_\ell)
\Bigg) \nn \\
=
&\sum_{\ell=1}^n  {\color{black}(-1)^{\ell-1} } \binom{n-1}{\ell-1}
\sum_{w \in S_n} w \cdot
\Bigg(
{\color{black}\Delta(\ell,2,3,\dots,\ell-1,1,\ell+1,\dots,n)} \nn \\
&\times \prod_{j=\ell+1}^{n} \prod_{k=1}^n (u_{j}-v_k) \prod_{j=2}^\ell \prod_{k=1}^n
(u_{j}-v_k+c) f(u_1)
\Bigg).
\end{align}
On the other hand,
the coefficient of $f(u_1)$ of the right hand side of \eqref{rationalLascoux} is
\begin{align}
\frac{(n-1)!}{{\color{black}-c} \prod_{j=2}^n (u_1-u_j) } P_{n,n}^{(z=1)}(\overline{u}|\overline{v}),
\end{align}
hence what we need to show is the following identity
\begin{align}
&\sum_{\ell=1}^n {\color{black} (-1)^{\ell-1} } \binom{n-1}{\ell-1}
\sum_{w \in S_n: w(1)=1} w \cdot
\Bigg(
{\color{black}\Delta(\ell,2,3,\dots,\ell-1,1,\ell+1,\dots,n)} \nn \\
&\times \prod_{j=\ell+1}^{n} \prod_{k=1}^n (u_{j}-v_k) \prod_{j=2}^\ell \prod_{k=1}^n
(u_{j}-v_k+c)
\Bigg)=\frac{(n-1)!}{{\color{black}-c} \prod_{j=2}^n (u_1-u_j) } P_{n,n}^{(z=1)}(\overline{u}|\overline{v}).
\label{reductiontoshow}
\end{align}
We prove this identity by viewing both hand sides as polynomials in $v_1,\dots,v_n$.
One can easily see that the degree of $v_j, j=1,\dots,n$ of the left hand side
of \eqref{reductiontoshow} is at most $n-1$.
From the determinant representation \eqref{Pnnz=1},
we can check that the degree of $v_j, j=1,\dots,n$ of $P_{n,n}^{(z=1)}(\overline{u}|\overline{v})$ and hence
the right hand side of \eqref{reductiontoshow} is also at most $n-1$.

Since the degree of $v_j$ $( j=1,\dots, n)$ is at most $n-1$, it is enough to check the equality
\eqref{reductiontoshow} for $n^n$ distinct points in $(v_1,\dots,v_n)$.
We check the following $(2n-1) \times (2n-2) \times \cdots \times n$ $(\ge n^n)$ cases: specializing $v_1,\dots,v_n$ to subsets of
$\{ u_1,u_2,u_2+c,\dots,u_n,u_n+c \}$.

It is easy to see from the expression that the left hand side of \eqref{reductiontoshow}
vanishes if $v_i=u_k$, $v_j=u_k+c$ for some $i,j,k \ (i \neq j)$ and $2 \le k \le n$.
The right hand side also vanishes due to Lemma \ref{vanishinglemma}.
We next consider the other cases which we denote by
$\overline{u}=\{\overline{v}, \overline{v}+c \}$ in the previous subsection.
By symmetry, it is enough to take $I={1,2,\dots,|I|}$ and $J={|I|+1,\dots,n}$,
i.e., consider the substitution $v_1=u_1,\dots,v_{|I|}=u_{|I|},v_{|I|+1}=u_{|I|+1}+c,\dots,v_n=u_n+c$.
One notes that the summands in the left hand side which survive after this substitution
are those which satisfy $\ell=|I|$ and
$\{ w(2), \dots, w(|I|) \}=\{ 2,\dots,|I| \}$, $\{ w(|I|+1),\dots,w(n) \}=\{ |I|+1,\dots,n \}$
in addition to $w(1)=1$.
The left hand side reduces to
\begin{align}
& {\color{black} (-1)^{|I|-1} } \binom{n-1}{|I|-1}
\sum_{\substack{ w \in S_n: w(1)=1 \\ \{ w(2), \dots, w(|I|) \}=\{ 2,\dots,|I| \} \\
\{ w(|I|+1),\dots,w(n) \}=\{ |I|+1,\dots,n \}
}}
w \cdot
\Bigg(
{\color{black}\Delta(|I|,2,3,\dots,|I|-1,1,|I|+1,\dots,n) \Bigg)} \nn \\
&\times \Bigg( \prod_{j=|I|+1}^{n} \prod_{k=1}^n (u_{j}-v_k) \prod_{j=2}^{|I|} \prod_{k=1}^n
(u_{j}-v_k+c)
\Bigg) \Bigg|_{v_1=u_1,\dots,v_{|I|}=u_{|I|},v_{|I|+1}=u_{|I|+1}+c,\dots,v_n=u_n+c}.
\label{reductiontoshowspecialization}
\end{align}
Using the well-known identity
\begin{align}
\sum_{w \in S_n} \Delta(k_1,k_2,\dots,k_n)=n!,
\end{align}
one finds
\begin{align}
&\sum_{\substack{ w \in S_n: w(1)=1 \\ \{ w(2), \dots, w(|I|) \}=\{ 2,\dots,|I| \} \\
\{ w(|I|+1),\dots,w(n) \}=\{ |I|+1,\dots,n \}
}}
w \cdot
\Bigg(
{\color{black}\Delta(|I|,2,3,\dots,|I|-1,1,|I|+1,\dots,n) \Bigg)} \nn \\
=&(|I|-1)! (n-|I|)! \prod_{j=2}^{|I|} \prod_{k=|I|+1}^n \frac{u_j-u_k-c}{u_j-u_k}
\prod_{j=2}^{|I|} \frac{u_j-u_1-c}{u_j-u_1} \prod_{k=|I|+1}^n \frac{u_1-u_k-c}{u_1-u_k}.
\label{reductiontoshowspecializationone}
\end{align}
We can also easily see
\begin{align}
&\Bigg( \prod_{j=|I|+1}^{n} \prod_{k=1}^n (u_{j}-v_k) \prod_{j=2}^{|I|} \prod_{k=1}^n
(u_{j}-v_k+c)
\Bigg) \Bigg|_{v_1=u_1,\dots,v_{|I|}=u_{|I|},v_{|I|+1}=u_{|I|+1}+c,\dots,v_n=u_n+c} \nn \\
=&\prod_{j=|I|+1}^n \prod_{k=1}^{|I|} (u_j-u_k)
\prod_{j=|I|+1}^n \prod_{k=|I|+1}^n (u_j-u_k-c)
\prod_{j=2}^{|I|} \prod_{k=1}^{|I|} (u_j-u_k+c) \prod_{j=2}^{|I|} \prod_{k=|I|+1}^{n} (u_j-u_k).
\label{reductiontoshowspecializationtwo}
\end{align}
Inserting
\eqref{reductiontoshowspecializationone} and \eqref{reductiontoshowspecializationtwo}
into \eqref{reductiontoshowspecialization} and simplifying and rearranging, 
we have the following expression for 
\eqref{reductiontoshowspecialization} which is the specialization of the left hand side of
\eqref{reductiontoshow}
\begin{align}
&\sum_{\ell=1}^n {\color{black} (-1)^{\ell-1}} \binom{n-1}{\ell-1}
\sum_{w \in S_n: w(1)=1} w \cdot
\Bigg(
{\color{black}\Delta(\ell,2,3,\dots,\ell-1,1,\ell+1,\dots,n)} \nn \\
&\times \prod_{j=\ell+1}^{n} \prod_{k=1}^n (u_{j}-v_k) \prod_{j=2}^\ell \prod_{k=1}^n
(u_{j}-v_k+c)
\Bigg) \nn \\
=&\frac{(n-1)! (-1)^{n-|I|}}{{\color{black}-c} \prod_{j=2}^n (u_1-u_j)} 
\prod_{j=|I|+1}^n \prod_{k=1}^{|I|} (u_j-u_k)
\prod_{j=1}^{|I|} \prod_{k=1}^n (u_j-u_k-c)
\prod_{j=|I|+1}^n \prod_{k=|I|+1}^n (u_j-u_k+c).
\label{specializationlhs}
\end{align}
On the other hand,
from the case $m=n$, $I={1,2,\dots,|I|}$ and $J={|I|+1,\dots,n}$ of
Proposition \ref{PQspecializations}, we find the specialization of the 
right hand side of \eqref{reductiontoshow} is
\begin{align}
&\frac{(n-1)!}{{\color{black}-c} \prod_{j=2}^n (u_1-u_j) } P_{n,n}^{(z=1)}(\overline{u}|\overline{v})
\Bigg|_{v_1=u_1,\dots,v_{|I|}=u_{|I|},v_{|I|+1}=u_{|I|+1}+c,\dots,v_n=u_n+c}
\nn \\
=&\frac{(n-1)!(-1)^{n-|I|}}{{\color{black}-c} \prod_{j=2}^n (u_1-u_j)} 
\prod_{j=|I|+1}^n \prod_{k=1}^{|I|} (u_j-u_k)
\prod_{j=1}^{|I|} \prod_{k=1}^n (u_j-u_k-c)
\prod_{j=|I|+1}^n \prod_{k=|I|+1}^n (u_j-u_k+c),
\end{align}
which is exactly the same with the specialization of the left hand side \eqref{specializationlhs}.

Since we checked the equality for enough specializations of $(v_1,\dots,v_n)$,
we conclude the identity \eqref{reductiontoshow} and hence
\eqref{rationalLascoux} holds.

\end{proof}

The following identity is the rational version of {\color{black} \cite[Theorem 4]{Lascoux}.}
\begin{theorem}
{\color{black}
We have
\begin{align}
&\mathrm{Sym}_c \Bigg(
(1-\tau)^n \prod_{j=1}^n \prod_{k=1}^n \frac{u_j-v_k-c}{u_j-v_k}
\Bigg) \nn \\
=&
\frac{n! c^n \prod_{i=1}^n \prod_{k=1}^n (v_i-u_k+c)}{\prod_{1 \le i < j \le n} (v_j-v_i) \prod_{1 \le i < j \le n} (u_i-u_j)} 
\det_{1 \le j,k \le n} \Bigg(\frac{1}{(v_j-u_k+c)(v_j-u_k)}  \Bigg),
\label{corrationallascouxexplicit}
\end{align}
}

Here $\tau$ acts on variables and functions as $\tau u_j:=u_{j+1}$, $j=1,\dots,n-1$ and 
$\tau \left( \frac{u_n-v_k-c}{u_n-v_k} \right):=1$.
\end{theorem}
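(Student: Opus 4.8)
The plan is to prove \eqref{corrationallascouxexplicit} by the same polynomial interpolation strategy already used for the rational source identity and for the rational Lascoux identity \eqref{rationalLascouxsymmetrization}, after first recognizing the right-hand side in terms of the Gaudin--Izergin--Korepin polynomial $P_{n,n}^{(z=1)}$. Comparing \eqref{corrationallascouxexplicit} with the determinant evaluation \eqref{Pnnz=1} and flipping the sign of $c$ (so that $(v_j-u_k)(v_j-u_k-c)\mapsto(v_j-u_k)(v_j-u_k+c)$), one checks directly that the right-hand side of \eqref{corrationallascouxexplicit} equals
\[
\frac{n!}{\prod_{i=1}^n\prod_{k=1}^n(v_i-u_k)}\,P_{n,n}^{(z=1)}(\overline{u}|\overline{v})\big|_{c\to -c}.
\]
Hence, after multiplying both sides of \eqref{corrationallascouxexplicit} by $\prod_{i,k}(v_i-u_k)$, it suffices to prove the polynomial identity $\prod_{i,k}(v_i-u_k)\cdot\mathrm{Sym}_c\big((1-\tau)^n\prod_{j=1}^n\phi(u_j)\big)=n!\,P_{n,n}^{(z=1)}(\overline{u}|\overline{v})|_{c\to -c}$, where I write $\phi(u):=\prod_k\frac{u-v_k-c}{u-v_k}$.

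First I would set up the interpolation. Each factor $\phi(u_j)$ is symmetric in $v_1,\dots,v_n$, while $\tau$, the permutations $w\in S_n$ and the weight $\Delta(1,\dots,n)$ act only on the $u$-variables; therefore the left-hand side is manifestly symmetric in $v$, and multiplying by the symmetric polynomial $\prod_{i,k}(v_i-u_k)$ preserves this. Expanding $(1-\tau)^n=\sum_{s=0}^n\binom{n}{s}(-1)^s\tau^s$ and using the truncation rule $\tau^s\prod_{j=1}^n\phi(u_j)=\prod_{j=s+1}^n\phi(u_j)$, one sees that the only $v$-poles come from the denominators $\prod_k(u_j-v_k)$, which are exactly cancelled by $\prod_{i,k}(v_i-u_k)$; a degree count parallel to Lemma \ref{degreesymmetrylemma} then shows the cleared left-hand side is a polynomial of degree at most $n$ in each $v_i$. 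The right-hand side $n!\,P_{n,n}^{(z=1)}|_{c\to -c}$ has the same symmetry and the same degree bound by Lemma \ref{degreesymmetrylemma}, so it is enough to match both sides at $(n+1)^n$ suitable points.

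I would use the $c\to -c$ analogue of the specialization family of the previous proof, namely $v_j\in\{u_{p_j},\,u_{p_j}-c\}$. On the right-hand side the values come for free: the $c\to -c$ version of Lemma \ref{vanishinglemma} gives vanishing whenever $v_i=u_k$ and $v_j=u_k-c$ for some $i\neq j$, and the $c\to -c$ version of Proposition \ref{PQspecializations} supplies the explicit product for $\overline{v}=\{\overline{u}_I,\overline{u}_J-c\}$. The substantive work is on the symmetrized left-hand side: I would evaluate $\prod_{i,k}(v_i-u_k)\,\mathrm{Sym}_c\big((1-\tau)^n\prod_j\phi(u_j)\big)$ at these points, identify which terms of the $(1-\tau)^n$ expansion and which permutations $w$ survive the substitution, and collapse the surviving sum using $\sum_{w\in S_n}\Delta(k_1,\dots,k_n)=n!$, exactly as in the computation \eqref{reductiontoshowspecialization}--\eqref{specializationlhs} of the proof of \eqref{rationalLascoux}.

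The main obstacle will be precisely this last bookkeeping: controlling the interaction between the truncating shift $\tau$ (with its special rule killing the top factor) and the full symmetrization, so as to isolate at each specialization the single surviving value of $s$ and the surviving block-permutations, and then to match their contribution against the explicit product of Proposition \ref{PQspecializations}. An appealing alternative, which is Lascoux's own route and the one suggested by the remark that \eqref{corrationallascouxexplicit} ``follows from'' the rational Lascoux identity \eqref{rationalLascouxsymmetrization}, is to deduce it from \eqref{rationalLascoux} by a judicious choice of the free function $f$ together with an operator identity turning $(1-\tau)^n$ on $\prod_j\phi(u_j)$ into $(1-\theta)^{n-1}$ on the Theorem \eqref{rationalLascouxsymmetrization} integrand; there the difficulty instead becomes choosing $f$ whose iterated divided difference $\partial_{n-1}\cdots\partial_1 f(u_1)$ produces the factor $1/\prod_{i,k}(v_i-u_k)$ and reconciling the cyclic shift $\theta$ with the truncated shift $\tau$.
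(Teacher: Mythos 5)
Your proposal is sound, and your identification of the right-hand side is correct: by \eqref{Pnnz=1} the stated determinant expression equals $\frac{n!}{\prod_{i,k}(v_i-u_k)}\,P_{n,n}^{(z=1)}(\overline{u}|\overline{v})\big|_{c\to -c}$, which is the same as the form $\frac{n!}{\prod_{j,k}(u_j-v_k)}\,P_{n,n}^{(z=1)}(\overline{u}|\overline{v}+c)$ used in the paper's reformulation \eqref{corrationallascoux}. But your primary route is genuinely different from the paper's. The paper does not re-run the interpolation machinery at all: it proves the two operator identities \eqref{tocorlasone} and \eqref{tocorlastwo}, namely
\begin{align*}
(1-\tau)^n \prod_{j=1}^n\prod_{k=1}^n \frac{u_j-v_k-c}{u_j-v_k}
=\prod_{j=1}^n\prod_{k=1}^n\frac{1}{u_j-v_k}\,(1-\theta)^{n-1}\prod_{j=2}^n\prod_{k=1}^n (u_j-v_k-c)\,f(u_1),
\end{align*}
with $f(u)=\prod_k(u-v_k-c)-\prod_k(u-v_k)$, pulls the $u$-symmetric prefactor $\prod_{j,k}(u_j-v_k)^{-1}$ through $\mathrm{Sym}_c$, applies the already-proved Theorem \eqref{rationalLascoux} (with $\overline{v}$ shifted by $c$), and finishes by reading off $\partial_{n-1}\cdots\partial_1 f(u_1)=-nc$ from the top coefficient of the degree-$(n-1)$ polynomial $f$. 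In particular, in your ``appealing alternative'' there is nothing delicate to choose: the function $f$ is forced by the algebra, its iterated divided difference is a \emph{constant}, and the factor $1/\prod_{i,k}(v_i-u_k)$ you worried about is not produced by $f$ at all --- it is simply the symmetric prefactor that passes through the symmetrization. Your direct interpolation plan is also viable as stated: the survival analysis at $\overline{v}=\{\overline{u}_I,\overline{u}_J-c\}$ does isolate the single truncation $s=|J|$ together with the block permutations (and $\binom{n}{|J|}\cdot|I|!\,|J|!=n!$ accounts for the overall constant), while the $c\to -c$ versions of Lemma \ref{vanishinglemma} and Proposition \ref{PQspecializations} hold verbatim since $c$ is generic there; the vanishing cases even hold term by term in the expansion. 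What the paper's route buys is brevity and reuse --- no new specialization computations are needed once \eqref{rationalLascouxsymmetrization} is established, and the $c=0$ case is disposed of trivially; what your route buys is logical independence from that theorem, at the price of redoing exactly the bookkeeping you correctly flag as the main labor.
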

{\color{black}
Let us give some remarks.
For $n=1$, both hand sides of \eqref{corrationallascouxexplicit} are $\displaystyle \frac{-c}{u_1-v_1}$.
For $n=2$, the left hand side of \eqref{corrationallascouxexplicit} is
\begin{align}
\Bigg(
\frac{u_1-v_1-c}{u_1-v_1} \frac{u_1-v_2-c}{u_1-v_2} \frac{u_2-v_1-c}{u_2-v_1} \frac{u_2-v_2-c}{u_2-v_2}
-2 \frac{u_2-v_1-c}{u_2-v_1} \frac{u_2-v_2-c}{u_2-v_2}+1
\Bigg) \frac{u_1-u_2-c}{u_1-u_2} \nn \\
+\Bigg(
\frac{u_2-v_1-c}{u_2-v_1} \frac{u_2-v_2-c}{u_2-v_2} \frac{u_1-v_1-c}{u_1-v_1} \frac{u_1-v_2-c}{u_1-v_2}
-2 \frac{u_1-v_1-c}{u_1-v_1} \frac{u_1-v_2-c}{u_1-v_2}+1
\Bigg) \frac{u_2-u_1-c}{u_2-u_1},
\end{align}
which can be simplified as
\begin{align}
\frac{2c^2(c^2-cu_1-cu_2+cv_1+cv_2-u_1 v_1-u_2 v_1-u_1 v_2-u_2 v_2+2 u_1 u_2+2 v_1 v_2)}
{(u_1-v_1)(u_1-v_2)(u_2-v_1)(u_2-v_2)}. \label{checkexampleLascouxsecond}
\end{align}
One can also check that
the right hand side of \eqref{corrationallascouxexplicit} for $n=2$
can also be simplified as \eqref{checkexampleLascouxsecond}.
Also note that
using \eqref{Pnnz=1}, 
\eqref{corrationallascouxexplicit} can be rewritten as
\begin{align}
\mathrm{Sym}_c \Bigg(
(1-\tau)^n \prod_{j=1}^n \prod_{k=1}^n \frac{u_j-v_k-c}{u_j-v_k}
\Bigg)
=
\frac{n!}{ \prod_{j=1}^n \prod_{k=1}^n (u_j-v_k)  } P_{n,n}^{(z=1)}(\overline{u}|\overline{v}+c).
\label{corrationallascoux}
\end{align}
}

\begin{proof}
{\color{black} For $c=0$, it is easy to see that both hand sides of \eqref{corrationallascouxexplicit} vanish.
Let us now assume $c \neq 0$.}
This can be shown in the same way with the proof of Theorem 4 in \cite{Lascoux}.
First, by direct computation, we find the following relation
\begin{align}
&(1-\tau)^n \prod_{j=1}^n \prod_{k=1}^n \frac{u_j-v_k-c}{u_j-v_k} \nn \\
=&\prod_{j=1}^n \prod_{k=1}^n \frac{1}{u_j-v_k} \sum_{j=1}^n
  (-1)^{j-1} \binom{n-1}{j-1}  
\prod_{\ell=1}^{j-1} \prod_{k=1}^n (u_\ell-v_k)
\prod_{\ell=j+1}^{n} \prod_{k=1}^n (u_\ell-v_k-c)
f(u_j), \label{tocorlasone}
\end{align}
where
\begin{align}
f(u_j)=\prod_{k=1}^n (u_j-v_k-c)-\prod_{k=1}^n (u_j-v_k). \label{tocorlaspoly}
\end{align}
We also note the following equality holds
\begin{align}
&\sum_{j=1}^n
  (-1)^{j-1} \binom{n-1}{j-1}  
\prod_{\ell=1}^{j-1} \prod_{k=1}^n (u_\ell-v_k)
\prod_{\ell=j+1}^{n} \prod_{k=1}^n (u_\ell-v_k-c)
f(u_j) \nn \\
=&(1-\theta)^{n-1} \prod_{j=2}^n \prod_{k=1}^n (u_j-v_k-c) f(u_1). \label{tocorlastwo}
\end{align}
Combining \eqref{tocorlasone} and \eqref{tocorlastwo},
we get
\begin{align}
(1-\tau)^n \prod_{j=1}^n \prod_{k=1}^n \frac{u_j-v_k-c}{u_j-v_k} 
=\prod_{j=1}^n \prod_{k=1}^n \frac{1}{u_j-v_k} (1-\theta)^{n-1} \prod_{j=2}^n \prod_{k=1}^n (u_j-v_k-c) f(u_1),
\end{align}
and further combining with \eqref{rationalLascoux}, we have
\begin{align}
&-c \ \mathrm{Sym}_c \Bigg(
(1-\tau)^n \prod_{j=1}^n \prod_{k=1}^n \frac{u_j-v_k-c}{u_j-v_k}
\Bigg) \nn \\
=&-c \ \mathrm{Sym}_c \Bigg(
\prod_{j=1}^n \prod_{k=1}^n \frac{1}{u_j-v_k} (1-\theta)^{n-1} \prod_{j=2}^n \prod_{k=1}^n (u_j-v_k-c) f(u_1)
\Bigg) \nn \\
=& \prod_{j=1}^n \prod_{k=1}^n \frac{1}{u_j-v_k} \times (-c) \
\mathrm{Sym}_c
\Bigg(
(1-\theta)^{n-1} \prod_{j=2}^n \prod_{k=1}^n (u_j-v_k-c) f(u_1)
\Bigg) \nn \\
=&
\frac{(n-1)!}{ \prod_{j=1}^n \prod_{k=1}^n (u_j-v_k)  } P_{n,n}^{(z=1)}(\overline{u}|\overline{v}+c)
\partial_{n-1} \cdots \partial_1 f(u_1). \label{corrationallascouxpre}
\end{align}
Finally, we note the explicit form of $\partial_{n-1} \cdots \partial_1 f(u_1)$ where $f(u_1)$
is the $j=1$ case of \eqref{tocorlaspoly} can be obtained from the coefficient
of $u_1^{n-1}$ of $f(u_1)$ since lower degree terms vanish after applying the divided difference operators.
We find $\partial_{n-1} \cdots \partial_1 f(u_1)=-nc$ and
inserting into \eqref{corrationallascouxpre} gives
\eqref{corrationallascoux}.
\end{proof}

\subsection{Geometric derivation of rational source identity}

The rational version \eqref{rationalKajihara} of the 
source identity is also obtained from the cohomological 
version of the wall-crossing formula.
We follow the similar arguments as in \S 
\ref{subsec:geom} taking the usual Euler class 
\[
\eu^{\theta}(M^{\zeta^{\pm}}_{Q}(\ell))=
\eu(TM^{\zeta^{\pm}}_{Q} (\ell) 
\otimes \mathbb{C}_{e^{\theta}})
\]
instead of $\eu^{K}(M^{\zeta^{\pm}}_{Q} (\ell))$.


We see that $\int_{M^{\zeta^{+}}_{Q}(\ell)} 
\eu^{\theta}(M^{\zeta^{+}}_{Q}(\ell) )$ is equal to
\begin{align*}
&\sum_{K \subset [n+1,\dots, n +m] \atop 
\lvert K \rvert=\ell}
\prod_{i \in K \atop j \in [n+1,\dots, n +m] \setminus K} 
{a_j - a_i + \theta \over a_j - a_i}
\prod_{ i \in K  \atop 1 \le k \le n } 
{a_i - a_k + \theta \over a_i - a_k}
\\
=&\sum_{K \subset [1,\dots,m] \atop 
\lvert K \rvert=\ell}
\prod_{i \in K \atop j \not\in K} 
{v_i-v_j-c \over v_i-v_j}
\prod_{ i \in K  \atop 1 \le k \le n } 
{v_i-u_k \over v_i-u_k-c},
\end{align*}
and $\int_{M^{\zeta^{-}}_{Q}(\ell)} 
\Eu^{\theta}(M^{\zeta^{-}}_{Q}(\ell) )$ is equal to
\begin{align*}
& \sum_{K \subset [1,\dots,n] \atop \lvert K \rvert=\ell} 
\prod_{i \in K \atop j \in [1, \ldots, n] \setminus  K} 
{ a_i - a_j+\theta \over a_i - a_j}
\prod_{ i \in K  \atop n+1 \le k \le n+m } 
{a_k - a_i + \theta \over a_k - a_i}
\\
=& \sum_{K \subset [1,\dots,n] \atop 
\lvert K \rvert=\ell} 
\prod_{i \in K \atop j \not\in K} 
{u_i-u_j+c \over u_i-u_j}
\prod_{ i \in K  \atop 1 \le k \le m } 
{u_i-v_k \over u_i-v_k+c},
\end{align*}
where we put $\theta=-c$, and $a_{j} = \begin{cases} -u_{j} -c & j=1, \ldots, n \\ - v_{j} & j=n+1, \ldots, n+m \end{cases}$.
These are coefficients of $z^{\ell}$ in generating series 
appearing in both sides of 
\eqref{rationalKajihara}.  
\\
Wall-crossing formula \cite[Theorem 7.2]{O} induces
\begin{align*}
&
\int_{M^{\zeta^{+}}_{Q}(\ell)} \Eu^{\theta}(M^{\zeta^{+}}_{Q}(\ell) ) 
- \int_{M^{\zeta^{-}}_{Q}(\ell)} 
\Eu^{\theta}(M^{\zeta^{-}}_{Q}(\ell) ) \\
&=
\sum_{k=1}^{\ell} 
\sum_{\mbi{\mk I} \in \Dec(\ell) \atop |\mbi{\mk I}|=k} 
{ (\ell-k)! \over \ell ! }
\prod_{i=1}^{j} ( d_{i} -1)! 
\gamma_{d_{i}} (\theta) 
( s( \mk I_{i}, \mbi{\mk I}_{> i}) -  (n-m)d_{i} ) 
\int_{ M^{\zeta^{-}}_{Q}(\ell -k)} 
\Eu^{\theta}(M^{\zeta^{-}}_{Q}(\ell) ).
\end{align*}
Here we set 
\[
s(\mk I_{1}, \mk I_{2}) = \left| \lbrace (i, j) \in 
\mk I_{1} \times \mk I_{2} \mid i < j \rbrace \right| - 
\left| \lbrace (i, j) \in \mk I_{1} \times \mk I_{2} 
\mid i > j \rbrace \right|,
\]
and we have
\begin{align}
\label{gamma}
\gamma_{d}(\theta) = \int_{M_{Q^{\sharp}}(d)} 
\Eu^{\theta}(TM^{\zeta^{-}}_{Q^{\sharp}}(d) )  
= \begin{cases} 1 & d=1\\ 0 & d \neq 1. \end{cases}
\end{align}

By \eqref{gamma}, we can re-write wall-crossing 
formula as
\begin{align*}
&
\int_{M^{\zeta^{+}}_{Q}(\ell)} \Eu^{\theta} 
((M^{\zeta^{+}}_{Q}(\ell) ))
- \int_{M^{\zeta^{-}}_{Q}(\ell)} \Eu^{\theta}
(M^{\zeta^{-}}_{Q}(\ell) ) \\
&=
\sum_{k=1}^{\ell} 
\sum_{\mbi{\mk I} \in \Dec(\ell) 
\atop 
{|\mbi{\mk I}|=k
\atop |\mk I_{1}|= \cdots = |\mk I_{k}|=1}}
{ (\ell-k)! \over \ell ! }
\prod_{i=1}^{j}  ( s( \mk I_{i}, \mbi{\mk I}_{> i}) +  (m-n) ) 
\int_{ M^{\zeta^{-}}_{Q}(\ell -k)} \Eu^{\theta}
((M^{\zeta^{-}}_{Q}(\ell) )).
\end{align*}

Since \eqref{rationalKajihara} is equivalent to 
\[
\int_{M^{\zeta^{+}}_{Q}(\ell)} \Eu^{\theta}
((M^{\zeta^{+}}_{Q}(\ell) )) 
- \int_{M^{\zeta^{-}}_{Q}(\ell)} \Eu^{\theta} 
((M^{\zeta^{-}}_{Q}(\ell) ))\\
=
\sum_{k=1}^{\ell} 
\binom{m-n}{k} \int_{ M^{\zeta^{-}}_{Q}(\ell -k)} 
\Eu^{\theta}((M^{\zeta^{-}}_{Q}(\ell) )),
\]
we must show that
\begin{align}
\label{source}
\sum_{{\ell \ge h_{1} > \cdots > h_{k} \ge 1}} 
{ (\ell-k)!  \over \ell ! }
\prod_{i=1}^{k}  ( s( \lbrace h_{i} \rbrace, 
[1, \ldots, \ell] \setminus \lbrace h_{1}, \ldots, h_{i} 
\rbrace ) +  (m-n) ) 
=
\binom{m-n}{k}.
\end{align}
This follows from taking the limit $t \to 1$
of \eqref{qidentityforgeometricderivation}.

\section*{Acknowledgement}
The authors thank Vitaly Tarasov for useful discussions
and sending their preprint.
The authors would like to thank the reviewers for their careful reading and various comments and suggestions.
K.M. is supported by Grant-in-Aid for Scientific Research 21K03176, 20K03793, JSPS.
R.O. is supported by Grant-in-Aid for Scientific Research 21K03180, 
JSPS.

\end{document}